\documentclass{amsart}
\usepackage{amssymb}
\usepackage{mathrsfs}

\newtheorem{theorem}{Theorem}[section]
\newtheorem{lemma}[theorem]{Lemma}
\newtheorem{proposition}[theorem]{Proposition}
\newtheorem{corollary}[theorem]{Corollary}
\newtheorem{claim}[theorem]{Claim}

\theoremstyle{definition}
\newtheorem{definition}[theorem]{Definition}

\theoremstyle{remark}
\newtheorem{remark}[theorem]{Remark}
\newtheorem{convention}[theorem]{Convention}

\numberwithin{equation}{section}

\DeclareMathOperator{\Int}{Int}
\DeclareMathOperator{\pr}{pr}
\newcommand{\Ar}{\mathscr{A}}
\newcommand{\D}{\mathscr{D}}
\newcommand{\F}{\mathcal{F}}
\newcommand{\I}{\mathcal{I}}
\newcommand{\K}{\mathcal{K}}
\newcommand{\N}{\mathbb{N}}
\newcommand{\R}{\mathbb{R}}
\newcommand{\Sc}{\mathcal{S}}
\newcommand{\Sr}{\mathscr{S}}
\newcommand{\U}{\mathscr{U}}
\newcommand{\X}{\mathscr{X}}
\newcommand{\Y}{\mathscr{Y}}
\newcommand{\Z}{\mathbb{Z}}
\newcommand{\ve}[1]{\boldsymbol{#1}}

\begin{document}

\title{Knot points of typical continuous functions}

\author{David Preiss}
\address{Mathematics Institute, University of Warwick, Coventry CV4 7AL, United Kingdom}
\email{d.preiss@warwick.ac.uk}

\author{Shingo Saito}
\address{Institute of Mathematics for Industry, Kyushu University, 744, Motooka, Nishi-ku, Fukuoka, 819-0395, Japan}
\email{ssaito@imi.kyushu-u.ac.jp}

\subjclass[2010]{Primary 26A27; Secondary 26A21, 28A05, 54H05}

\date{}

\dedicatory{}

\begin{abstract}
It is well known that
most continuous functions are nowhere differentiable.
Furthermore, in terms of Dini derivatives,
most continuous functions are nondifferentiable in the strongest possible sense
except in a small set of points.
In this paper, we completely characterise families $\Sc$ of sets of points
for which most continuous functions have the property that
such small set of points belongs to $\Sc$.
The proof uses a topological zero-one law and the Banach-Mazur game.
\end{abstract}

\maketitle

\section{Introduction}
Since Banach~\cite{Banach} and Mazurkiewicz~\cite{Mazurkiewicz}
independently proved in 1931 that
most continuous functions are nowhere differentiable,
many mathematicians have been investigating
properties of most functions.
In the study of most functions,
we first have to make clear
what `most' means.
Although a number of definitions have been invented,
we shall use the most classical notion,
upon which the above-mentioned papers by Banach and Mazurkiewicz are based.
Let us begin by recalling the classical notion of Baire category.

We write $I$ for the unit interval $[0,1]$,
and $C(I)$ for the set of all continuous functions from $I$ to $\R$.
The space $C(I)$ is a Banach space
under the supremum norm~$\lVert\cdot\rVert$. 
Recall that a subset $A$ of a topological space $X$
is \emph{nowhere dense} if $\Int\bar{A}=\emptyset$;
it is \emph{meagre} if it can be expressed
as a countable union of nowhere dense subsets of $X$;
it is \emph{residual} (or \emph{comeagre}) if $A^c$ is meagre.
Properties of `most' functions will be understood as
those possessed by all functions in a residual subset of $C(I)$:

\begin{definition}
 We say that a \emph{typical} (or \emph{generic}) function $f\in C(I)$
 has a property $P$
 if the set of all $f\in C(I)$ with the property $P$ is residual in $C(I)$.
\end{definition}

As mentioned at the beginning,
a typical function is nowhere differentiable,
so its derivative cannot be considered.
In place of its derivative,
we shall look at its \emph{Dini derivatives}:

\begin{definition}\label{definition:Dini_derivatives}
 Let $f\in C(I)$. We define
 \[
  D^+f(x)=\limsup_{y\downarrow x}\frac{f(y)-f(x)}{y-x},\qquad
  D_+f(x)=\liminf_{y\downarrow x}\frac{f(y)-f(x)}{y-x}
 \]
 for $x\in[0,1)$, and
 \[
  D^-f(x)=\limsup_{y\uparrow x}\frac{f(y)-f(x)}{y-x},\qquad
  D_-f(x)=\liminf_{y\uparrow x}\frac{f(y)-f(x)}{y-x}
 \]
 for $x\in(0,1]$.
 They are called the \emph{Dini derivatives} of $f$ at $x$.
\end{definition}

The oldest result about the behaviour of the Dini derivatives of a typical continuous function
is the following theorem by Jarn\'{\i}k~\cite{Jarnik}:

\begin{theorem}[\cite{Jarnik}]\label{theorem:Jarnik}
 A typical function $f\in C(I)$ has the property that
 \[
  D^+f(x)=D^-f(x)=\infty,\qquad
  D_+f(x)=D_-f(x)=-\infty
 \]
 for almost every $x\in(0,1)$.
\end{theorem}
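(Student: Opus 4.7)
The plan is to prove, for each of the four Dini derivatives separately, that it achieves its extreme value on a set of full measure for a typical $f\in C(I)$; since the intersection of four residual sets is residual, the four almost-everywhere statements then combine on a residual set. By the symmetries $f\mapsto -f$ and $f\mapsto f(1-\cdot)$, both isometries of $C(I)$ and hence preservers of residuality, together with the identities $D^+(-f)=-D_+f$ and $D^+(f(1-\cdot))(x)=-D_-f(1-x)$, all four cases reduce to one. I take the case $D^+f(x)=+\infty$ for almost every $x\in[0,1)$.

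Observing that $D^+f(x)<\infty$ iff there exist positive rationals $M,\delta$ with $f(y)-f(x)\le M(y-x)$ for every $y\in[x,x+\delta]\cap I$, set
\[
E_{M,\delta}(f)=\{x\in[0,1-\delta]:f(y)-f(x)\le M(y-x)\text{ for every }y\in[x,x+\delta]\},
\]
a closed subset of $I$. Then $\{x\in[0,1):D^+f(x)<\infty\}=\bigcup_{M,\delta\in\mathbb{Q}^+}E_{M,\delta}(f)$, so the goal reduces to showing that
\[
B_{M,\delta,\eta}=\{f\in C(I):|E_{M,\delta}(f)|\ge\eta\}
\]
is nowhere dense in $C(I)$ for each triple of positive rationals $(M,\delta,\eta)$. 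For closedness of $B_{M,\delta,\eta}$: if $f_n\to f$ uniformly and $x_{n_k}\in E_{M,\delta}(f_{n_k})$ converges to some $x$, passing to the limit in the defining inequality gives $x\in E_{M,\delta}(f)$; hence $\limsup E_{M,\delta}(f_n)\subseteq E_{M,\delta}(f)$, and the reverse Fatou inequality in $[0,1]$ yields $|E_{M,\delta}(f)|\ge\eta$.

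The crux is nowhere density. Given $f\in C(I)$ and $\varepsilon>0$, approximate $f$ uniformly within $\varepsilon/2$ by a piecewise linear $f_0$ of maximum slope $K$, and set $g=f_0+z$ where $z$ is a sawtooth of amplitude less than $\varepsilon/2$, period $p<\delta$, and slopes $\pm N$ with $N$ chosen much larger than both $M$ and $K$. For this $g$: at any $x$ in the interior of a rising zigzag segment, $g(y)-g(x)>M(y-x)$ for $y$ in the remainder of the segment, so $x\notin E_{M,\delta}(g)$; at $x$ on a falling segment at distance $s$ from its top, choosing $y$ to be the next local maximum of $g$ (at distance at most $p<\delta$) and unwinding the arithmetic shows $x\notin E_{M,\delta}(g)$ unless $s\le Mp/(M+N)$. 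Consequently $|E_{M,\delta}(g)|\le\tfrac{1}{2}\cdot\tfrac{M}{M+N}$, which is less than $\eta$ once $N$ is large enough. Moreover, the witnessing strict inequality $g(y)-g(x)>M(y-x)$ has a uniform positive gap, so it survives $L^\infty$-perturbations of $g$ of magnitude less than some $\sigma>0$; hence the ball of radius $\sigma$ about $g$ lies in $B_{M,\delta,\eta}^c$.

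The main obstacle is the quantitative bookkeeping in the sawtooth construction: one must choose $N$ large (depending on $M$, $\eta$, and $K$), then $p$ small (depending on $N$ and $\varepsilon$, so that the amplitude is $<\varepsilon/2$ and the period $<\delta$), and finally $\sigma$ small (depending on $N$, $p$, and $\eta$), in that order, so that $\|g-f\|<\varepsilon$, $|E_{M,\delta}(g)|<\eta$, and this estimate survives the perturbation. Every other step is routine.
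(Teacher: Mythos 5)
The paper does not prove this theorem; it simply cites Jarn\'{\i}k's 1933 article (and, indeed, the statement is a special case of the paper's Theorem~\ref{theorem:PreissZajicek} with $\I$ the $\sigma$-ideal of Lebesgue null sets). So there is no internal proof to compare yours against. Your argument is the standard direct Baire-category proof, and its skeleton is sound: the reduction to the single derivative $D^+$ via the isometries $f\mapsto -f$ and $f\mapsto f(1-\cdot)$; the decomposition of $\{x:D^+f(x)<\infty\}$ into the countably many closed sets $E_{M,\delta}(f)$ over rational $M,\delta$; the reverse-Fatou argument for closedness of $B_{M,\delta,\eta}$; and the sawtooth perturbation to make the complement dense.

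There is, however, one genuine gap as written. Your claim that ``the witnessing strict inequality $g(y)-g(x)>M(y-x)$ has a uniform positive gap'' is false: as $x$ approaches a local maximum of the sawtooth from either side, the quantity $\sup_{y\in[x,x+\delta]}\bigl(g(y)-g(x)-M(y-x)\bigr)$ tends to $0$, so no single $\sigma$ works uniformly on the complement of $E_{M,\delta}(g)$. The correct way to make the final step rigorous is to introduce, for $\sigma>0$, the decreasing family
\[
F_\sigma=\Bigl\{x\in[0,1-\delta]:\ \sup_{y\in[x,x+\delta]}\bigl(g(y)-g(x)-M(y-x)\bigr)\le 2\sigma\Bigr\},
\]
observe that $F_\sigma\downarrow E_{M,\delta}(g)$ as $\sigma\downarrow 0$, so $|F_\sigma|<\eta$ for small $\sigma$ by continuity of Lebesgue measure from above, and note that $E_{M,\delta}(\tilde g)\subseteq F_\sigma$ whenever $\lVert\tilde g-g\rVert<\sigma$. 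This recovers exactly the ordering of choices you describe ($N$, then $p$, then $\sigma$). A smaller point: your estimate $|E_{M,\delta}(g)|\le\tfrac12\cdot\tfrac{M}{M+N}$ drops the slope $K$ of the piecewise-linear approximant; the honest bound is of order $(M+K)/(N+M+K)$, which still goes to $0$ as $N\to\infty$, but it makes the dependence of $N$ on $K$ explicit rather than implicit. With these repairs the proof is complete.
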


This theorem leads us to the following definition:

\begin{definition}\label{definition:knot_point}
 We say that a point $x\in I$ is a \emph{knot point} of $f\in C(I)$ if
 \begin{itemize}
  \item $x\in(0,1)$, $D^+f(x)=D^-f(x)=\infty$, and $D_+f(x)=D_-f(x)=-\infty$; or
  \item $x=0$, $D^+f(x)=\infty$, and $D_+f(x)=-\infty$; or
  \item $x=1$, $D^-f(x)=\infty$, and $D_-f(x)=-\infty$.
 \end{itemize}
 For $f\in C(I)$, we write $N(f)$ for the set of all
 points in $I$ that are \emph{not} knot points of $f$.
\end{definition}

Theorem~\ref{theorem:Jarnik} means that
a typical function $f\in C(I)$ has the property that $N(f)$ is Lebesgue null,
i.e.\ small from the measure-theoretic viewpoint.
It is natural to ask in what sense of smallness
it is true that a typical function has the property that $N(f)$ is small.
Zaj\'{\i}\v{c}ek and the first author answered this question
in unpublished work~\cite{PreissZajicek} by giving
a necessary and sufficient condition for a $\sigma$-ideal $\I$
to satisfy that a typical function $f\in C(I)$ has the property that $N(f)\in\I$
(see Theorem~\ref{theorem:PreissZajicek} for the precise statement).
The purpose of this paper is to generalise this theorem
by giving a necessary and sufficient condition
for an \emph{arbitrary} family $\Sc$ of subsets of $I$
to satisfy that a typical function $f\in C(I)$ has the property that $N(f)\in\Sc$
(see Theorem~\ref{theorem:maintheorem_general} for the precise statement).

The paper is structured as follows.
We first state the main theorem in Section~2.
Section~3 gives basic properties of sets and functions
that will be used later in this paper.
Section~4 provides descriptive set-theoretic arguments
and reduces the main theorem to what we call the key proposition.
In Section~5 we prove the key proposition using the Banach-Mazur game.

\begin{remark}
 The results in this paper are part of the second author's PhD thesis~\cite{thesis}.
\end{remark}

\section{Statement of the main theorem}
\subsection{Residuality of families of $F_{\sigma}$ sets}
In order to state the main theorem,
we need the definition of residuality of families of $F_{\sigma}$ sets,
given in \cite{Saito}.

We write $\K$ for the set of all closed subsets of $I$,
and equip it with the Hausdorff metric $d$,
where we define $d(K,\emptyset)=1$ for any nonempty set $K\in\K$.
Excluding $0$ from the set $\N$ of all positive integers,
we denote by $\K^{\N}$ the set of all sequences of members of $\K$,
and by $\K_{\nearrow}^{\N}$ the subset of $\K^{\N}$ consisting of
all increasing sequences:
\begin{align*}
 \K^{\N}&=\{(K_n)\mid\text{$K_n\in\K$ for all $n\in\N$}\},\\
 \K_{\nearrow}^{\N}&=\{(K_n)\in\K^{\N}\mid K_1\subset K_2\subset\cdots\}.
\end{align*}
The spaces $\K$, $\K^{\N}$, and $\K_{\nearrow}^{\N}$ are
all compact metrisable topological spaces.

We write $\F_{\sigma}$ for the family of all $F_{\sigma}$ subsets of $I$.
The following is the main theorem of \cite{Saito}:

\begin{theorem}[\cite{Saito}]\label{theorem:residual_equivalent}
 For a subfamily $\F$ of $\F_{\sigma}$,
 the following are equivalent:
 \begin{enumerate}
  \item $\{(K_n)\in\K^{\N}\mid\bigcup_{n=1}^{\infty}K_n\in\F\}$
   is residual in $\K^{\N}$;
  \item $\{(K_n)\in\K_{\nearrow}^{\N}\mid\bigcup_{n=1}^{\infty}K_n\in\F\}$
   is residual in $\K_{\nearrow}^{\N}$.
 \end{enumerate}
\end{theorem}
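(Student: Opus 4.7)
The natural bridge between the two spaces is the continuous surjection
\[
 \Phi\colon \K^\N \to \K_\nearrow^\N,\qquad \Phi(K_n)_i := \bigcup_{k \le i} K_k,
\]
which admits the continuous section $\sigma\colon \K_\nearrow^\N \hookrightarrow \K^\N$ given by inclusion (so $\Phi\circ\sigma = \mathrm{id}$). Since $\bigcup_n K_n = \bigcup_n \Phi(K_n)_n$, writing $A := \{(K_n) \in \K^\N : \bigcup K_n \in \F\}$ and $B := \{(L_n) \in \K_\nearrow^\N : \bigcup L_n \in \F\}$, one has $A = \Phi^{-1}(B)$ and $B = A \cap \K_\nearrow^\N$. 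Crucially $A$ is \emph{fibre-constant} under $\Phi$: whether $(K_n) \in A$ depends only on $\Phi(K_n)$, not on the individual $K_n$'s.

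The plan is to prove the equivalence by translating winning strategies between the Banach-Mazur games $\mathrm{BM}(\K^\N, A)$ and $\mathrm{BM}(\K_\nearrow^\N, B)$, invoking the classical characterisation that residuality of the target set is equivalent to Player II having a winning strategy. For $(2) \Rightarrow (1)$, given a winning strategy $\tau$ for II in the game on $\K_\nearrow^\N$, I would have II simulate $\tau$ in parallel: when Player I plays an open $U \subset \K^\N$, II picks a basic rectangular neighborhood $U' \subset U$ around some $(K_n^*) \in U$, forms $(L_n^*) := \Phi(K_n^*)$, takes a small basic open neighborhood $W \ni (L_n^*)$ in $\K_\nearrow^\N$, feeds $W$ to $\tau$ to obtain $W' := \tau(W)$, and plays $V := \Phi^{-1}(W') \cap U'$. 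For $(1) \Rightarrow (2)$, symmetrically, when Player I plays $W \subset \K_\nearrow^\N$, II feeds $\Phi^{-1}(W) \subset \K^\N$ (open and nonempty, since it contains $\sigma(W)$) to the given strategy $\tau$ for the $\K^\N$ game, obtains $V := \tau(\Phi^{-1}(W))$, and plays $V^* := V \cap \K_\nearrow^\N \subset W$.

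The main obstacle will be that $\Phi$ is not an open map: the image of a basic open rectangle $V_1 \times \cdots \times V_N \times \K \times \cdots$ under $\Phi$ need not have interior in $\K_\nearrow^\N$ (for instance when the $V_i$'s are jointly incompatible with any increasing chain of closed sets). The strategy translations must therefore be done delicately so that the sets played remain nonempty and of vanishing diameter at each stage; the fibre-constancy of $A$ is what saves the argument, as it allows II to force only $\Phi(K_n)$ into $B$ while absorbing the fine structure of the individual $K_n$'s into the fibres of $\Phi$ without affecting the outcome condition $\bigcup K_n \in \F$.
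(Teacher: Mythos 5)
Your structural observations are correct and well chosen: the map $\Phi\colon\K^{\N}\to\K_{\nearrow}^{\N}$ given by $\Phi(\ve{K})_i=\bigcup_{k\le i}K_k$ is continuous, the inclusion $\sigma$ is a continuous section, $A=\Phi^{-1}(B)$ and $B=A\cap\K_{\nearrow}^{\N}$, and Oxtoby's Banach--Mazur characterisation of residuality is the natural tool. Note, however, that the present paper merely cites this theorem from the reference [Saito] and does not reprove it, so there is no in-paper argument to compare against; the remarks below concern only the internal soundness of your proposal.

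As written, the plan has a concrete gap that you flag but do not close, and it is not cosmetic. The central difficulty is that $\K_{\nearrow}^{\N}$ is a closed \emph{nowhere dense} subset of $\K^{\N}$: near any basepoint, even one in $\K_{\nearrow}^{\N}$, one can perturb a single coordinate to break monotonicity (e.g.\ $K_1=[0,0.99]$, $K_2=[0.01,1]$ are both Hausdorff-close to $I$ yet $K_1\not\subset K_2$). Consequently, in $(1)\Rightarrow(2)$, after you feed $\Phi^{-1}(W)$ to the strategy $\tau$ and obtain $V=\tau\bigl(\Phi^{-1}(W)\bigr)$, the proposed move $V^\ast=V\cap\K_{\nearrow}^{\N}$ will typically be empty, hence not a legal move in the game on $\K_{\nearrow}^{\N}$. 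Symmetrically, in $(2)\Rightarrow(1)$ the move $V=\Phi^{-1}(W')\cap U'$ may be empty: the strategy on $\K_{\nearrow}^{\N}$ is free to return a $W'$ that sits far from $\Phi(\ve{K}^\ast)$ inside $W$, while $U'$ pins $\ve{K}$ near $\ve{K}^\ast$ and continuity of $\Phi$ then keeps $\Phi(\ve{K})$ near $\Phi(\ve{K}^\ast)$, disjoint from $W'$. One cannot pre-empt this by shrinking $W$ into $\Phi(U')$, precisely because $\Phi$ is not open, as you observe: for instance with $K_1^\ast=[0,\tfrac12]$, $K_2^\ast=\{\tfrac34\}$, taking $L_1=[0,\tfrac12-\delta]$ and $L_2=L_2^\ast=[0,\tfrac12]\cup\{\tfrac34\}$ forces $K_1=L_1$ and then forces $K_2\supset\overline{L_2\setminus L_1}\ni\tfrac12$, far from $K_2^\ast$, so $(L_1,L_2)$ has no preimage in a small rectangle about $(K_1^\ast,K_2^\ast)$. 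Your closing sentence about ``absorbing the fine structure into the fibres of $\Phi$'' points at the right intuition---one should exploit that the payoff set $A$ is $\Phi$-saturated---but it is not yet an argument. Until you specify, at each round, how to keep both the real and the simulated moves nonempty and nested, \emph{and} how to guarantee that the limit of the real game is $\Phi$ of the limit of the simulated game (so that winning one implies winning the other), the proposal remains a plan rather than a proof.
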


\begin{definition}
 A subfamily $\F$ of $\F_{\sigma}$ is said to be \emph{residual}
 if the conditions in Theorem~\ref{theorem:residual_equivalent} hold.
\end{definition}

\begin{proposition}[{\cite[Proposition~1.5]{Saito}}]\label{prop:sigma_ideal_equivalency_residuality}
 If $\I$ is a $\sigma$-ideal on $I$, then
 $\I\cap\K$ is residual in $\K$ if and only if
 $\I\cap\F_{\sigma}$ is residual in $\F_{\sigma}$.
\end{proposition}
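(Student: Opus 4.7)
The key observation I would use is that since $\I$ is a $\sigma$-ideal, the condition $\bigcup_{n=1}^\infty K_n\in\I$ is equivalent to demanding $K_n\in\I$ for every $n\in\N$: the forward implication uses downward closure (each $K_n\subset\bigcup_k K_k$) and the reverse uses $\sigma$-additivity. Consequently,
\[
\bigl\{(K_n)\in\K^{\N}\bigm|\textstyle\bigcup_{n=1}^{\infty}K_n\in\I\bigr\}
=(\I\cap\K)^{\N}.
\]
Because every $F_\sigma$-set in $I$ is of the form $\bigcup K_n$, the residuality of $\I\cap\F_\sigma$ (as defined via Theorem~\ref{theorem:residual_equivalent}) is therefore exactly the residuality of $(\I\cap\K)^{\N}$ in $\K^{\N}$. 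So the proposition reduces to the purely topological assertion that, for $A:=\I\cap\K\subset\K$, the set $A$ is residual in $\K$ if and only if $A^{\N}$ is residual in $\K^{\N}$.

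For the forward direction, I would write $A=\bigcap_{m}U_m$ with each $U_m$ dense open in $\K$ and observe
\[
A^{\N}=\bigcap_{n,m\in\N}\pr_n^{-1}(U_m),
\]
where $\pr_n\colon\K^{\N}\to\K$ is the $n$-th coordinate projection. Each $\pr_n^{-1}(U_m)$ is open by continuity, and it is dense: any nonempty basic box in $\K^{\N}$ is free in its $n$-th factor apart from finitely many constraints, and density of $U_m$ in $\K$ lets us adjust the $n$-th coordinate into $U_m$. This expresses $A^{\N}$ as a countable intersection of dense open subsets of the Polish space $\K^{\N}$.

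For the converse, I would argue by contrapositive. Suppose $A$ is not residual, so that $A^{c}=\K\setminus A$ is non-meagre in $\K$. I want to deduce that $(A^{\N})^{c}=\bigcup_{n}\pr_n^{-1}(A^{c})$ is non-meagre in $\K^{\N}$, for which it suffices to show $\pr_1^{-1}(A^c)=A^{c}\times\K^{\{2,3,\dots\}}$ is non-meagre. This is immediate from the Kuratowski--Ulam theorem applied to $\K\times\K^{\{2,3,\dots\}}\cong\K^{\N}$: if this product were meagre, then for residually many tails the $K_1$-slice—which is the fixed set $A^c$—would be meagre in $\K$, contradicting the assumption.

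I do not anticipate a serious obstacle here; the entire content is the reformulation in the first paragraph, which converts the statement about unions into a product statement, after which both implications are soft consequences of the product structure of $\K^{\N}$ (dense-open intersection in one direction, Kuratowski--Ulam in the other). The one place to be a bit careful is simply verifying that the equality $\{(K_n):\bigcup K_n\in\I\}=(\I\cap\K)^{\N}$ genuinely uses both axioms of a $\sigma$-ideal.
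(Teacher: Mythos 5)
The paper does not actually prove this proposition; it imports it verbatim from \cite{Saito} (Proposition~1.5 there), so there is no in-paper argument to compare against. Your proof, however, is a correct self-contained argument. The opening reduction is exactly right and is where the $\sigma$-ideal hypothesis is genuinely used: downward closure gives that $\bigcup_n K_n\in\I$ forces each $K_n\in\I$, and countable additivity gives the converse, so $\{\ve{K}\in\K^{\N}:\bigcup_n K_n\in\I\}=(\I\cap\K)^{\N}$, and since $\bigcup_n K_n$ is automatically $F_\sigma$ this set is precisely the one whose residuality defines residuality of $\I\cap\F_\sigma$. This converts the statement into the purely topological one that, for $A:=\I\cap\K$, $A$ is residual in $\K$ iff $A^{\N}$ is residual in $\K^{\N}$. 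Your forward direction is fine (only note that for a residual $A$ one has $A\supset\bigcap_m U_m$ rather than equality, so the displayed line should be an inclusion $A^{\N}\supset\bigcap_{n,m}\pr_n^{-1}(U_m)$; the conclusion is unchanged). Your backward direction via Kuratowski--Ulam is also sound: you only invoke the direction ``meagre $\Rightarrow$ comeagrely many slices meagre,'' and a meagre set automatically has the Baire property, so no regularity assumption on $\I$ is needed. One could alternatively argue the backward direction by observing that $\pr_1\colon\K^{\N}\to\K$ is a continuous open surjection, under which preimages of nowhere dense sets are nowhere dense, so $\pr_1^{-1}(A^c)\subset(A^{\N})^c$ meagre forces $A^c$ meagre; but the Kuratowski--Ulam route you chose is equally clean.
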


\subsection{Statement of the main theorem}
We are now ready to state the main theorem of this paper.
The following theorem has been announced by Zaj\'{\i}\v{c}ek~\cite{Zajicek}
and proved by Zaj\'{\i}\v{c}ek and the first author~\cite{PreissZajicek}:

\begin{theorem}[\cite{PreissZajicek}, {\cite[Theorem~2.5]{Zajicek}}]%
 \label{theorem:PreissZajicek}
 For a $\sigma$-ideal $\I$ on $I$, the following are equivalent:
 \begin{enumerate}
  \item a typical function $f\in C(I)$ has the property that $N(f)\in\I$;
  \item $\I\cap\K$ is residual in $\K$.
 \end{enumerate}
\end{theorem}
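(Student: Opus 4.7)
The plan is to set up a canonical map $\Phi\colon C(I)\to\K_\nearrow^\N$ that converts the statement about $N(f)$ into a statement about increasing sequences of closed sets, and then to transfer residuality via the Banach--Mazur game. For $n\in\N$, let $K_n(f)$ be the union of four closed subsets of $I$ encoding failure of the knot-point condition at the quantitative level~$n$; for example, one summand is $\{x\in I:(f(y)-f(x))/(y-x)\le n\text{ for all }y\in(x,x+1/n]\cap I\}$, and the other three are analogous for the remaining Dini derivatives. Each $K_n(f)$ is closed by continuity of~$f$, the sequence is nondecreasing, and $\bigcup_n K_n(f)=N(f)$. Put $\Phi(f)=(K_n(f))$.

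By Proposition~\ref{prop:sigma_ideal_equivalency_residuality} combined with Theorem~\ref{theorem:residual_equivalent}, condition~(2) is equivalent to
\[
R:=\biggl\{(L_n)\in\K_\nearrow^\N : \bigcup_{n=1}^{\infty}L_n\in\I\biggr\}
\]
being residual in $\K_\nearrow^\N$. Since $\I$ is a $\sigma$-ideal, $\Phi(f)\in R$ if and only if $N(f)\in\I$, so condition~(1) is equivalent to $\Phi^{-1}(R)$ being residual in $C(I)$. The theorem therefore reduces to the claim that $\Phi^{-1}(R)$ is residual in $C(I)$ if and only if $R$ is residual in $\K_\nearrow^\N$.

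For the direction $(2)\Rightarrow(1)$ I would use the Banach--Mazur game in $C(I)$. Given that $R$ contains a dense $G_\delta$ subset $\bigcap_m U_m$ of $\K_\nearrow^\N$, Player~II's strategy at the $k$-th move replaces the current piecewise-linear function by a refinement carrying an added sawtooth perturbation of slope much larger than~$k$, and in parallel refines the target to a sequence $(L_n^{(k)})\in U_k$. The steep sawtooth ensures that in the uniform limit~$f$ every point outside $\bigcup_n L_n^{(\infty)}$ becomes a knot point (its Dini derivatives are driven to~$\pm\infty$), while the careful placement of the kinks keeps each $K_n(f)$ close to~$L_n^{(\infty)}$; hence $\Phi(f)\in\bigcap_m U_m\subset R$ and therefore $N(f)\in\I$.

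For the converse $(1)\Rightarrow(2)$ I would argue by contrapositive: if $R$ is not residual in $\K_\nearrow^\N$, there is a nonempty basic open set $V\subset\K_\nearrow^\N$ and a dense open family inside $V$ missing~$R$. Using the upper-semicontinuity of $\Phi$ together with the fact that every prescribed increasing sequence of closed sets can be well approximated by $\Phi(f)$ for a polygonal~$f$, one lifts this obstruction to a ball in $C(I)$ on which $\Phi^{-1}(R)$ is nowhere dense, contradicting its residuality. The principal difficulty is the Banach--Mazur construction used in $(2)\Rightarrow(1)$: the sawtooth perturbations at each stage must simultaneously align $K_n(f_k)$ with the target $L_n^{(k)}$, avoid creating unintended non-knot points in the uniform limit, and stay compatible with all previous moves in both the function and the set-sequence games played in parallel; this is exactly the role of the ``key proposition'' announced in the introduction.
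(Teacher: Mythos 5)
Your translation of the two conditions into statements about the map $\Phi$ and the set $R$ is fine, but it restates the theorem rather than reducing it: the whole content of the theorem is exactly the equivalence ``$\Phi^{-1}(R)$ residual in $C(I)$ $\iff$ $R$ residual in $\K_\nearrow^\N$,'' and that equivalence is where your sketch breaks down.

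The genuine gap is in $(1)\Rightarrow(2)$. You pass from ``$R$ is not residual'' to ``there is a nonempty basic open set $V$ containing a dense open family missing $R$.'' That inference is valid only when $R$ has the Baire property; in general a subset of a Polish space can fail to be meagre and also fail to be comeagre in every nonempty open set. For an arbitrary $\sigma$-ideal $\I$, the set $R=\{(L_n):\bigcup_n L_n\in\I\}$ is an essentially arbitrary subset of $\K_\nearrow^\N$ with no reason to have the Baire property, so the contrapositive argument cannot begin, and the subsequent ``lifting'' to $C(I)$ has nothing to stand on. This is precisely the obstacle the paper is engineered to avoid: instead of analysing $R$ directly, it introduces the relation $\X\subset\K^\N\times C(I)$, proves $\X$ is analytic (Proposition~\ref{prop:X_analytic}), and, given a dense $G_\delta$ set $G\subset\{f:N(f)\in\I\}$ furnished by hypothesis~(1), considers the projection $\Ar$ of $\X\cap(\K^\N\times G)$ to $\K^\N$. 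That $\Ar$ is analytic (hence has the Baire property) and invariant under finite permutations (Proposition~\ref{prop:K's_invariant}), so the topological zero-one law (Proposition~\ref{prop:zero-one}) forces $\Ar$ to be either meagre or residual; the Banach--Mazur step (the key proposition) is then invoked on $\Ar^c$ to exclude the meagre case, and residuality of $\Ar$ yields~(2) via Proposition~\ref{prop:X_cup=N}. None of this appears in your sketch, and without a surrogate for $R$ that provably has the Baire property no category argument for $(1)\Rightarrow(2)$ can run.

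Your direction $(2)\Rightarrow(1)$ correctly identifies the Banach--Mazur game and matches the role of the paper's key proposition, but it grossly underestimates the work. The map $f\mapsto K_n(f)$ is not Hausdorff-continuous (only a one-sided, parameter-shifted continuity holds, as in Proposition~\ref{prop:continuity}), so ``keeping $K_n(f)$ close to $L_n$ in the uniform limit'' is not a consequence of ``careful placement of kinks''; it requires the index combinatorics of the sets $A_j^m$, the two-sided invariants $(\bigstar_m)$ propagated through the rounds, and the bump-function estimates of Propositions~\ref{prop:bump_easy} and~\ref{prop:bump_difficult} together with Claims~\ref{claim:L^m}--\ref{claim:beta_m} to make the Hausdorff control survive all later perturbations.
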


Our main theorem is the following:

\begin{theorem}[Main Theorem]\label{theorem:maintheorem_general}
 For a family $\Sc$ of subsets of $I$, the following are equivalent:
 \begin{enumerate}
  \item a typical function $f\in C(I)$ has the property that $N(f)\in\Sc$.
  \item $\Sc\cap\F_{\sigma}$ is residual in $\F_{\sigma}$.
 \end{enumerate}
\end{theorem}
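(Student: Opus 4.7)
The plan is to reduce the equivalence to a single statement about the map $\Psi : C(I) \to \K_{\nearrow}^{\N}$ that sends each function to a canonical presentation of its non-knot-point set as an increasing union of closed sets, and then to establish this statement via the Banach-Mazur game.

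First I will observe that for every $f \in C(I)$ the set $N(f)$ is $F_\sigma$: the point $x$ fails to be a knot point precisely when one of the four Dini derivatives is finite, and (taking $D^+f(x) < \infty$ as a representative case) this is equivalent to
\[
 \exists\, n,m \in \N : \forall y \in [x,x+1/m]\cap I,\; f(y) - f(x) \le n(y-x);
\]
for fixed $n,m$ this describes a closed subset of $I$ by the continuity of $f$. Treating the other three Dini derivatives analogously yields countably many closed sets whose union is $N(f)$; enumerating and taking increasing unions produces a sequence $(K_j(f)) \in \K_{\nearrow}^{\N}$ with $\bigcup_j K_j(f) = N(f)$. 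Define $\Psi(f) = (K_j(f))$. Since $N(f)$ is always $F_\sigma$, the statement $N(f) \in \Sc$ depends only on $\Sc \cap \F_\sigma$, so I will assume $\Sc \subset \F_\sigma$ throughout.

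By Theorem~\ref{theorem:residual_equivalent}, condition (2) is equivalent to residuality in $\K_{\nearrow}^{\N}$ of the set $\Ar = \{(K_n) \in \K_{\nearrow}^{\N} : \bigcup_n K_n \in \Sc\}$, while condition (1) is exactly residuality of $\Psi^{-1}(\Ar)$ in $C(I)$. The theorem thus reduces to a \emph{key proposition} asserting that, for sets $\Ar$ arising in this form, residuality in $\K_{\nearrow}^{\N}$ is equivalent to residuality of $\Psi^{-1}(\Ar)$ in $C(I)$. The implication ``$\Ar$ residual $\Rightarrow \Psi^{-1}(\Ar)$ residual'' should follow from the upper semi-continuous dependence of the closed sets $K_j(f)$ on $f$ — if $f_k \to f$ in $C(I)$, $x_k \in K_j(f_k)$, and $x_k \to x$, then $x \in K_j(f)$ by continuity — together with a Kuratowski--Ulam style Baire category argument to transfer meagreness of $\Ar^c$ through $\Psi$.

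The converse direction is where the Banach-Mazur game on $\K_{\nearrow}^{\N}$ enters and where I expect the main obstacle to lie. To show $\Ar$ is residual I plan to produce a winning strategy for the second player: against the first player's shrinking basic open sets in $\K_{\nearrow}^{\N}$ — each specifying finitely many closed sets up to Hausdorff approximation — the second player uses the residuality of $\Psi^{-1}(\Ar)$ in $C(I)$ to select a continuous function whose $\Psi$-image lies in the first player's neighbourhood, and then responds with a sufficiently small basic open set around that image. The hard part will be the coordination step: forcing $\Psi(f)$ to approximate a prescribed finite configuration of increasing closed sets requires simultaneously designing the oscillations of $f$ across all four Dini directions and all parameter pairs $(n,m)$, so that the sets $K_{n,m}^{(k)}(f)$ come out as intended without disturbing the approximations fixed at previous stages of the game.
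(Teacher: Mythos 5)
Your proposal has a structural gap that goes to the heart of why this theorem is difficult: you attempt to fix a single canonical map $\Psi\colon C(I)\to\K_{\nearrow}^{\N}$ sending $f$ to a definite presentation of $N(f)$, and to prove that $\Ar$ is residual if and only if $\Psi^{-1}(\Ar)$ is. No such canonical $\Psi$ can carry Baire category across correctly, in either direction. The sets $N^{\pm}(f,a)$, $N_{\pm}(f,a)$ are rigidly constrained by $f$ (consecutive points of $N^{+}(f,a)$ must satisfy a one-sided Lipschitz condition, the four families are interdependent, and so on), so the image of any such $\Psi$ is a thin, structured subset of $\K_{\nearrow}^{\N}$; one cannot expect residuality in $\K_{\nearrow}^{\N}$ to pull back to residuality in $C(I)$, nor can one realise a prescribed approximate configuration of increasing closed sets as $\Psi(f)$, which is exactly what your proposed second-player move in a Banach--Mazur game on $\K_{\nearrow}^{\N}$ would require. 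You acknowledge this difficulty at the end (``forcing $\Psi(f)$ to approximate a prescribed finite configuration $\dots$ requires simultaneously designing the oscillations of $f$ across all four Dini directions and all parameter pairs''), but it is not merely hard: for a fixed canonical $\Psi$ it is infeasible, and this is precisely why the paper does not define such a map.

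Instead, the paper works with a \emph{relation} $\X\subset\K^{\N}\times C(I)$ that permits many sequences $\ve{K}$ to be associated with a given $f$ (all having $\bigcup_n K_n=N(f)$, by Proposition~\ref{prop:X_cup=N}), together with the Key Proposition~\ref{prop:key_prop}: for every residual $\Ar\subset\K^{\N}$ a typical $f$ admits \emph{some} $\ve{K}\in\Ar$ with $(\ve{K},f)\in\X$. Crucially, which $\ve{K}$ is chosen depends on $\Ar$, and the Banach--Mazur game is played in $C(I)$, not in $\K_{\nearrow}^{\N}$, with Player~II constructing $f$ and $\ve{K}$ \emph{simultaneously}, steering $\ve{K}$ into $\Ar$ at each step while building $f$ by adding bump functions. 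Your ``easy'' direction (residual $\Rightarrow$ typical) is in fact the hard one and is what the game proves; your appeal to upper semi-continuity of $f\mapsto K_j(f)$ plus a Kuratowski--Ulam argument is far too weak, since semi-continuity does not make a map category-preserving. Conversely, for the direction typical $\Rightarrow$ residual the paper does not use a game at all: it uses analyticity of $\X$ (Proposition~\ref{prop:X_analytic}), invariance of $\{\ve{K}:(\ve{K},f)\in\X\}$ under finite permutations (Proposition~\ref{prop:K's_invariant}), and the topological zero-one law (Proposition~\ref{prop:zero-one}) to conclude that the relevant set of sequences is either meagre or residual, ruling out the meagre alternative by applying the Key Proposition to the complement. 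That zero-one argument is the central new idea missing from your proposal.
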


Observe that Theorem~\ref{theorem:maintheorem_general} generalises
Theorem~\ref{theorem:PreissZajicek} due to
Proposition~\ref{prop:sigma_ideal_equivalency_residuality}.

\section{Basic properties of $\K$ and $N(f,a)$}
\subsection{Basic properties of $\K$}
For $a\in I$, $A\subset I$, and $r>0$, we set
\begin{align*}
 B(a,r)&=\{x\in I\mid \lvert x-a\rvert<r\},&
 \bar{B}(a,r)&=\{x\in I\mid \lvert x-a\rvert\le r\},\\
 B(A,r)&=\bigcup_{a\in A}B(a,r),&
 \bar{B}(A,r)&=\bigcup_{a\in A}\bar{B}(a,r).
\end{align*}

\begin{lemma}
 If $K,L\in\K$ and $r>0$ are such that $K\subset B(L,r)$,
 then $K\subset B(L,r-\varepsilon)$ for some $\varepsilon>0$.
\end{lemma}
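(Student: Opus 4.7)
The plan is to convert the open-neighbourhood containment into a pointwise inequality involving the continuous distance function to $L$, and then apply the compactness of $K$ to upgrade the strict inequality to a uniform margin. Concretely, assuming $L\neq\emptyset$, I would define $\phi:I\to\R$ by $\phi(x)=\inf_{a\in L}\lvert x-a\rvert$, which is $1$-Lipschitz and satisfies $B(L,s)=\{x\in I\mid\phi(x)<s\}$ for every $s>0$. The hypothesis $K\subset B(L,r)$ then reads $\phi(x)<r$ for every $x\in K$.

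The next step is to observe that $K$, being a closed subset of the compact interval $I$, is itself compact, so the continuous function $\phi$ attains its maximum on $K$ at some point $x_0$, and this maximum value $\phi(x_0)$ still satisfies $\phi(x_0)<r$. Choosing any positive $\varepsilon$ smaller than $r-\phi(x_0)$ (for instance $\varepsilon=(r-\phi(x_0))/2$) then yields $\phi(x)\le\phi(x_0)<r-\varepsilon$ for all $x\in K$, whence $K\subset B(L,r-\varepsilon)$.

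The only subtlety that needs a separate word is the degenerate case $L=\emptyset$, for which $\phi$ is not naturally defined; by the empty-union convention adopted just before the lemma, $B(L,r)$ is then empty, which forces $K=\emptyset$, and any $\varepsilon\in(0,r)$ works. Apart from this bookkeeping, I do not foresee any genuine obstacle: the argument is the standard compactness upgrade from a strict inequality holding at every point of a compact set to one holding with a uniform margin, and I expect the write-up to be only a few lines.
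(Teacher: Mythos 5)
Your proof is correct, and it takes a somewhat different route from the paper's. The paper argues by contradiction: assuming $K\not\subset B(L,r-\varepsilon)$ for every $\varepsilon>0$, it picks $x_n\in K\setminus B(L,r-1/n)$, passes to a convergent subsequence $x_n\to x\in K$ using compactness of $K$, finds $y\in L$ with $\lvert x-y\rvert<r$ (since $x\in B(L,r)$), and derives $\lvert x-y\rvert\ge r$ in the limit from $\lvert x_n-y\rvert\ge r-1/n$, a contradiction. You instead argue directly: you introduce the distance function $\phi(x)=\inf_{a\in L}\lvert x-a\rvert$, note it is continuous and that $K\subset B(L,r)$ is equivalent to $\phi<r$ on $K$, and invoke the extreme value theorem on the compact set $K$ to find a point where $\phi$ attains its maximum, which is still $<r$, giving a uniform margin $\varepsilon$. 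Both proofs are compactness arguments on $K$; yours is direct and conceptually cleaner because it packages the sequential argument into the standard fact that a continuous function on a compact set attains its maximum, while the paper's is more hands-on and avoids introducing $\phi$ explicitly. Your explicit treatment of the case $L=\emptyset$ (where the paper's convention $d(K,\emptyset)=1$ and the definition $B(\emptyset,r)=\emptyset$ force $K=\emptyset$) is a welcome piece of care, though the paper's contradiction argument handles it vacuously since no $x_n$ can be chosen when $K=\emptyset$. Your remark that one should take $\varepsilon<r$ so that $r-\varepsilon>0$ is also correct and worth the note.
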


\begin{proof}
 Suppose that $K\not\subset B(L,r-\varepsilon)$ for all $\varepsilon>0$,
 and take $x_n\in K\setminus B(L,r-1/n)$ for each $n\in\N$.
 We may assume that $x_n$ is convergent, say to $x$.
 Since $x\in K\subset B(L,r)$, there exists $y\in L$ with $\lvert x-y\rvert<r$.
 By the choice of $x_n$, we have $\lvert x_n-y\rvert\ge r-1/n$,
 and so $\lvert x-y\rvert\ge r$, which is a contradiction.
\end{proof}

\begin{corollary}
 For every $r>0$,
 the set $\{(K,L)\in\K^2\mid K\subset B(L,r)\}$ is open in $\K^2$.
\end{corollary}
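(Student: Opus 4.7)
The plan is to deduce the corollary directly from the preceding lemma, by showing that each point of the set has a metric neighborhood contained in the set.

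Fix $(K,L) \in \K^2$ with $K \subset B(L,r)$. By the lemma, there exists $\varepsilon > 0$ such that $K \subset B(L, r - \varepsilon)$. I would then set $\delta = \varepsilon/3$ and claim that every pair $(K',L') \in \K^2$ with $d(K,K') < \delta$ and $d(L,L') < \delta$ satisfies $K' \subset B(L', r)$. This suffices, since such pairs form an open neighborhood of $(K,L)$ in the product metric on $\K^2$.

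To verify the claim, I would chain three inclusions. First, $d(K,K') < \delta$ gives $K' \subset B(K, \delta)$. Second, $K \subset B(L, r - \varepsilon)$ yields $B(K, \delta) \subset B(L, r - \varepsilon + \delta)$, simply by the triangle inequality applied pointwise. Third, $d(L,L') < \delta$ gives $L \subset B(L', \delta)$, hence $B(L, r - \varepsilon + \delta) \subset B(L', r - \varepsilon + 2\delta)$. Combining these and using $2\delta < \varepsilon$ gives $K' \subset B(L', r)$.

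There is no real obstacle here; the only mild subtlety is the convention $d(K,\emptyset) = 1$, which makes $\emptyset$ an isolated point of $\K$, so the edge cases $K = \emptyset$ or $L = \emptyset$ are handled automatically by choosing $\delta$ smaller than $1$ if necessary. The main content of the argument is the lemma, which provides the slack $\varepsilon$ needed to absorb the Hausdorff perturbations of both coordinates.
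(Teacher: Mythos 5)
Your proof is correct and takes essentially the same route as the paper: invoke the lemma to extract slack $\varepsilon$, then chain Hausdorff-ball inclusions to absorb a perturbation of size $\delta$ in each coordinate. The paper uses $\delta = \varepsilon/2$ and compresses the chain into one line, but the argument is the same.
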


\begin{proof}
 Let $(K_0,L_0)$ belong to the set, and
 take $\varepsilon>0$ with $K_0\subset B(L_0,r-\varepsilon)$ using the previous lemma.
 If $(K,L)\in\K^2$ satisfies $d(K,K_0)<\varepsilon/2$ and $d(L,L_0)<\varepsilon/2$,
 then
 \[
  K\subset B(K_0,\varepsilon/2)\subset B(L_0,r-\varepsilon/2)\subset B(L,r).
 \]
 This completes the proof.
\end{proof}

\subsection{Definition of $N(f,a)$}
\begin{definition}\label{definition:N(f,a)}
 For $f\in C(I)$ and $a>0$, we define
 \begin{align*}
  N^+(f,a)
  &=\{x\in[0,1-2^{-a}]\mid\text{$f(y)-f(x)\le a(y-x)$ for all $y\in[x,x+2^{-a}]$}\},\\
  N_+(f,a)
  &=\{x\in[0,1-2^{-a}]\mid\text{$f(y)-f(x)\ge-a(y-x)$ for all $y\in[x,x+2^{-a}]$}\},\\
  N^-(f,a)
  &=\{x\in[2^{-a},1]\mid\text{$f(y)-f(x)\ge a(y-x)$ for all $y\in[x-2^{-a},x]$}\},\\
  N_-(f,a)
  &=\{x\in[2^{-a},1]\mid\text{$f(y)-f(x)\le-a(y-x)$ for all $y\in[x-2^{-a},x]$}\},
 \end{align*}
 and
 \begin{align*}
  \hat{N}(f,a)&=N^+(f,a)\cup N_-(f,a),\\
  \check{N}(f,a)&=N_+(f,a)\cup N^-(f,a),\\
  N(f,a)&=\hat{N}(f,a)\cup\check{N}(f,a)\\
        &=N^+(f,a)\cup N_+(f,a)\cup N^-(f,a)\cup N_-(f,a).
 \end{align*}
\end{definition}

\begin{convention}\label{conv:tilde}
 We shall use the symbol $\tilde{N}$ in a statement
 to mean that the statement with $\tilde{N}$ replaced by $\hat{N}$
 and the statement with $\tilde{N}$ replaced by $\check{N}$
 are both true;
 for instance, by $\tilde{N}(f,a)\subset\tilde{N}(g,b)$ we mean
 $\hat{N}(f,a)\subset\hat{N}(g,b)$ and $\check{N}(f,b)\subset\check{N}(g,b)$.
\end{convention}

\begin{remark}
 The mean value theorem shows that
 \[
  \lvert2^{-a}-2^{-b}\rvert\le\lvert a-b\rvert\log2\le\lvert a-b\rvert
 \]
 for all $a,b>0$.
 This estimate will sometimes be used implicitly in this paper.
\end{remark}

\begin{proposition}
 If $f\in C(I)$ and $0<a_1<a_2<\dots\to\infty$,
 then $N(f)=\bigcup_{n=1}^{\infty}N(f,a_n)$.
\end{proposition}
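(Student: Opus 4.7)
The plan is to prove both inclusions using the direct correspondence between membership in the sets $N^\pm(f,a)$, $N_\pm(f,a)$ and one-sided bounds on the corresponding Dini derivatives. First I would record the four elementary implications
\begin{align*}
 x\in N^+(f,a) &\implies D^+f(x)\le a, &
 x\in N_+(f,a) &\implies D_+f(x)\ge -a,\\
 x\in N^-(f,a) &\implies D^-f(x)\le a, &
 x\in N_-(f,a) &\implies D_-f(x)\ge -a,
\end{align*}
which are immediate from Definitions~\ref{definition:Dini_derivatives} and \ref{definition:N(f,a)} (for the $N^-, N_-$ cases one divides by $y-x<0$, reversing the inequality).

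For the inclusion $\bigcup_{n}N(f,a_n)\subset N(f)$, suppose $x\in N(f,a_n)$ for some $n$. Then one of the four implications above furnishes a finite bound on at least one Dini derivative at $x$. If $x\in(0,1)$ this immediately shows that $x$ fails the knot point condition, so $x\in N(f)$. If $x=0$, then the domain restrictions force $x\in N^+(f,a_n)\cup N_+(f,a_n)$ (since $N^\pm(f,a)$ with a minus sign require $x\ge 2^{-a_n}>0$), giving a finite bound on either $D^+f(0)$ or $D_+f(0)$, which is precisely the failure of the knot point condition at $0$. The case $x=1$ is symmetric.

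For the inclusion $N(f)\subset\bigcup_{n}N(f,a_n)$, suppose $x\in N(f)$. The failure of the knot point condition at $x$ means that at least one of the relevant Dini derivatives at $x$ is finite. Consider the representative case $D^+f(x)<\infty$ (the other three are analogous). By the definition of $\limsup$, there exist $M\ge 1$ and $\delta>0$ such that $f(y)-f(x)\le M(y-x)$ for all $y\in[x,x+\delta]$ (including $y=x$, trivially). Since $a_n\to\infty$ and $x<1$ in this subcase, I can pick $n$ so large that $a_n\ge M$ and $2^{-a_n}\le\min(\delta,1-x)$; then $x\in[0,1-2^{-a_n}]$ and $[x,x+2^{-a_n}]\subset[x,x+\delta]$, so $x\in N^+(f,a_n)\subset N(f,a_n)$. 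For the case $x=1$ one uses only the backward conditions, in accordance with the reduced definition of knot point at $1$, and analogously for $x=0$.

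There is no serious obstacle; the main care is purely bookkeeping — ensuring for each of the four possible failure modes that both the slope bound $a_n\ge M$ and the domain condition ($2^{-a_n}\le 1-x$ or $2^{-a_n}\le x$) are eventually met, and checking that the reduced knot point definitions at the endpoints $0$ and $1$ match the reduced list of $N$-sets that can contain those points.
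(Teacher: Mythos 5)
Your argument is correct, and it fills in exactly the bookkeeping that the paper dismisses with the one-word proof ``Trivial.'' Both directions follow the natural route (relating the defining inequalities of the sets $N^{\pm},N_{\pm}$ to the one-sided Dini derivative bounds, and handling the endpoints $0,1$ and the domain restrictions $x\in[0,1-2^{-a}]$ or $x\in[2^{-a},1]$ explicitly), so there is no divergence from the paper's intended reasoning---you have simply written it out.
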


\begin{proof}
 Trivial.
\end{proof}

\subsection{Descriptive properties of knot points}
\begin{proposition}\label{prop:N(f)_Fsigma}
 For every $f\in C(I)$ and $a>0$, the sets
 $N^{\pm}(f,a)$, $N_{\pm}(f,a)$, $\tilde{N}(f,a)$, and $N(f,a)$
 are all closed.
 Therefore $N(f)$ is $F_{\sigma}$ for every $f\in C(I)$.
\end{proposition}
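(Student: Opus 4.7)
The plan is to prove closedness of each of the four ``one-sided'' sets $N^+(f,a)$, $N_+(f,a)$, $N^-(f,a)$, $N_-(f,a)$ by a direct sequential argument. Since $\hat{N}(f,a)$, $\check{N}(f,a)$ and $N(f,a)$ are by definition finite unions of these, they will automatically be closed, and the $F_\sigma$ conclusion for $N(f)$ will follow at once from the preceding proposition applied to any sequence $a_n \to \infty$.

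I focus on $N^+(f,a)$; the other three cases are identical up to obvious sign and reflection changes. Let $x_n \in N^+(f,a)$ with $x_n \to x$. Since $[0,1-2^{-a}]$ is closed, $x \in [0,1-2^{-a}]$. Fix $y \in [x,x+2^{-a}]$; I need to show $f(y)-f(x)\le a(y-x)$. The key idea is to produce, for each $n$, a point $y_n$ lying in the admissible interval $[x_n,x_n+2^{-a}]$ and converging to $y$, then pass to the limit in the defining inequality using the continuity of $f$. A convenient choice is
\[
 y_n = \max\bigl(x_n,\min(y,x_n+2^{-a})\bigr),
\]
which is clearly in $[x_n,x_n+2^{-a}]$ and satisfies $y_n \to y$ because $x_n \to x$ and $x\le y\le x+2^{-a}$. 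The definition of $N^+(f,a)$ then gives $f(y_n)-f(x_n)\le a(y_n-x_n)$, and letting $n\to\infty$ yields $f(y)-f(x)\le a(y-x)$. Hence $x\in N^+(f,a)$.

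The analogous argument, with the one-sided interval and the inequality reversed or reflected as needed, shows that $N_+(f,a)$, $N^-(f,a)$, $N_-(f,a)$ are closed. Consequently $\hat{N}(f,a)$, $\check{N}(f,a)$, and $N(f,a)$ are closed as finite unions of closed sets, and the final assertion that $N(f)$ is $F_\sigma$ is obtained by writing $N(f)=\bigcup_n N(f,n)$ via the previous proposition.

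The proof is essentially routine; the only point requiring a moment's thought is the boundary case where $y = x + 2^{-a}$ and $x_n$ may lie slightly above $x$, so that $y$ itself fails to be in $[x_n,x_n+2^{-a}]$. The clamping device above handles this uniformly, so no separate case analysis is needed. I do not foresee any serious obstacle.
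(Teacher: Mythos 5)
Your proof is correct and uses the same basic strategy as the paper (a sequential limit argument after reducing by symmetry to $N^+(f,a)$), but the technical device differs. The paper argues by contradiction: it assumes a bad point $y$ with $f(y)-f(x)>a(y-x)$, uses continuity of $f$ to move $y$ into the open interval $(x,x+2^{-a})$, and then notes that for large $n$ the same $y$ lies in $(x_n,x_n+2^{-a})$, contradicting $x_n\in N^+(f,a)$. You instead give a direct argument: for an arbitrary admissible $y$ you construct a clamped sequence $y_n=\max\bigl(x_n,\min(y,x_n+2^{-a})\bigr)\in[x_n,x_n+2^{-a}]$ with $y_n\to y$, apply the defining inequality at $(x_n,y_n)$, and pass to the limit. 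Your clamping handles the endpoint case $y=x+2^{-a}$ uniformly, avoiding the small continuity step the paper needs; the paper's version is marginally shorter. Both conclude in the same way that the finite unions $\hat N$, $\check N$, $N(f,a)$ are closed and that $N(f)=\bigcup_n N(f,n)$ is $F_\sigma$.
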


\begin{proof}
 Obviously it suffices to show that $N^+(f,a)$ is closed.
 Suppose that a sequence $x_n$ of points in $N^+(f,a)$ converges to a point $x$.
 Since $x_n\in[0,1-2^{-a}]$ for all $n\in\N$, we have $x\in[0,1-2^{-a}]$.
 Assume for a contradiction that
 $f(y)-f(x)>a(y-x)$ for some $y\in[x,x+2^{-a}]$.
 By the continuity of $f$, we may assume that $y\in(x,x+2^{-a})$.
 Then since $x_n$ converges to $x$ and $f$ is continuous,
 there exists $n\in\N$ such that $y\in(x_n,x_n+2^{-a})$ and $f(y)-f(x_n)>a(y-x_n)$,
 which contradicts $x_n$ belonging to $N^+(f,a)$.
\end{proof}

By Proposition~\ref{prop:N(f)_Fsigma},
we can restate our main theorem (Theorem~\ref{theorem:maintheorem_general})
as follows:

\begin{theorem}[Main Theorem]\label{theorem:maintheorem2}
 For a subfamily $\F$ of $\F_{\sigma}$, the following are equivalent:
 \begin{enumerate}
  \item a typical function $f\in C(I)$ has the property that $N(f)\in\F$;
  \item $\F$ is residual.
 \end{enumerate}
\end{theorem}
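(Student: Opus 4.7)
The plan is to exploit the map $\Phi : C(I) \to \K_{\nearrow}^{\N}$ defined by $\Phi(f) = \bigl(N(f, a_n)\bigr)_n$ for a fixed sequence $0 < a_1 < a_2 < \cdots \to \infty$. A direct check from Definition~\ref{definition:N(f,a)} shows $N(f, a_n) \subseteq N(f, a_{n+1})$ (as $a$ grows, the admissible interval $[0, 1 - 2^{-a}]$ grows while each slope constraint weakens), so $\Phi$ really does land in $\K_{\nearrow}^{\N}$; and one has $N(f) = \bigcup_n N(f, a_n)$. Writing $\F^{\ast} = \bigl\{(K_n) \in \K_{\nearrow}^{\N} : \bigcup_n K_n \in \F\bigr\}$, condition~(2) of Theorem~\ref{theorem:maintheorem2} is, via Theorem~\ref{theorem:residual_equivalent}, exactly the residuality of $\F^{\ast}$ in $\K_{\nearrow}^{\N}$, while condition~(1) is the residuality of $\Phi^{-1}(\F^{\ast})$ in $C(I)$. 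Thus the theorem reduces to a transfer principle for $\Phi$: a set $\mathcal A \subseteq \K_{\nearrow}^{\N}$ is residual if and only if $\Phi^{-1}(\mathcal A)$ is residual in $C(I)$.

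For the direction (2)~$\Rightarrow$~(1), I would establish that $\Phi^{-1}$ preserves residuality using the Banach--Mazur game on $C(I)$. Given a dense $G_\delta$ set $\mathcal G \subseteq \K_{\nearrow}^{\N}$ containing $\F^{\ast}$, Player~II's strategy at stage~$k$ refines Player~I's open ball by a piecewise-linear perturbation that pins down $N(f, a_j)$ for $j \le k$ to within Hausdorff distance $1/k$ of a target sequence $(L_1, \ldots, L_k)$ drawn in parallel from a Banach--Mazur game on $\K_{\nearrow}^{\N}$ aimed at $\mathcal G$. The perturbation uses ``steep teeth'' of slope exceeding $a_j$ in absolute value to evict points from $N^{\pm}(f, a_j)$ or $N_{\pm}(f, a_j)$, and gentle linear segments of small slope to preserve membership; the splitting $N = \hat N \cup \check N$ used in Convention~\ref{conv:tilde} lets right- and left-sided constraints be imposed in tandem. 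This is presumably the ``key proposition'' referenced in the introduction.

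For the converse (1)~$\Rightarrow$~(2) I would argue contrapositively, invoking a topological zero-one law as hinted in the abstract. If $\F^{\ast}$ were meagre on some nonempty open $\mathcal U \subseteq \K_{\nearrow}^{\N}$, then iterating the lifting from the previous paragraph inside $\mathcal U$ would produce a nonempty open $V \subseteq C(I)$ on which $\{f \in V : \Phi(f) \in \mathcal U \setminus \F^{\ast}\}$ is residual, contradicting residuality of $\Phi^{-1}(\F^{\ast})$ in $V$ (which would follow from~(1)). The zero-one law is what converts a ``locally everywhere'' residuality statement into a global one, compensating for the fact that $\Phi$ is only upper semicontinuous, not continuous, as a set-valued map.

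The main obstacle is the simultaneous lifting used in both directions: given a base $f_0$, a tolerance, targets $L_1 \subseteq \cdots \subseteq L_k$, and the requirement that later rounds of the game can still freely adjust $N(f, a_j)$ for $j > k$, one must produce $f$ close to $f_0$ whose first $k$ approximating sets $N(f, a_j)$ match the prescribed $L_j$ within tolerance. The tension between rapid small-amplitude oscillation (to expel points from $N(f, a_j)$ at small scales $2^{-a_j}$) and near-linear behaviour near each $L_j$ (to retain the wanted points at larger scales $2^{-a_i}$, $i<j$) must be resolved by nesting the oscillations on carefully chosen dyadic scales, and I expect this combinatorial balancing to account for the bulk of the technical work in Section~5.
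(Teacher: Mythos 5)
Your outline captures the two broad tools the paper actually uses (a Banach--Mazur game in $C(I)$ for one implication, and a topological zero-one law for the other), but the central reduction you propose---via the fixed map $\Phi(f)=\bigl(N(f,a_n)\bigr)_n$ and a ``transfer principle'' stating that $\mathcal A$ is residual in $\K_{\nearrow}^{\N}$ iff $\Phi^{-1}(\mathcal A)$ is residual in $C(I)$---does not work, and the paper does something structurally different precisely to avoid this. The stated transfer principle is false in general: for instance, if $\mathcal A$ is the complement of the range of $\Phi$, then $\Phi^{-1}(\mathcal A)=\emptyset$ is not residual, yet $\mathcal A$ can perfectly well be residual; so residuality of $\Phi^{-1}(\mathcal A)$ and of $\mathcal A$ are logically independent for an arbitrary map $\Phi$. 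Even for the particular set $\F^{\ast}$, the strategy you describe does not close the loop. You propose that Player~II steers $f$ so that each $N(f,a_j)$ lies within Hausdorff distance $1/k$ of targets $L_j$ produced by an auxiliary game in $\K_{\nearrow}^{\N}$; but the limiting sets $N(f,a_j)$ are then only Hausdorff-close to $L_j$, not equal, and the map $(K_n)\mapsto\bigcup_n K_n$ is badly discontinuous for the Hausdorff metric. Consequently there is no reason for $\bigcup_j N(f,a_j)=N(f)$ to equal $\bigcup_j L_j$, and membership of $(L_j)_j$ in $\F^{\ast}$ does not pass to $\Phi(f)$. This is exactly the technical difficulty the paper spends Section~4 resolving.

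The paper's remedy is to \emph{decouple} the sequence $\ve{K}$ from the specific sequence $(N(f,a_j))_j$. The Key Proposition (Proposition~\ref{prop:key_prop}) asserts that for a residual $\Ar\subseteq\K^{\N}$, a typical $f$ admits \emph{some} $\ve{K}\in\Ar$ with $(\ve{K},f)\in\X$, where the relation $\X$ (Definitions~\ref{definition:X} and~\ref{definition:Y,X}) imposes not merely that $\ve{K}$ Hausdorff-approximate the sets $N(f,a_j)$, but that the approximation satisfy the ``controlled nesting'' conditions encoded in $\Sr_k(\ve{n},\ve{\delta})$ via the index sets $A_j^m$. Those conditions are exactly what makes Proposition~\ref{prop:K_n_trick} and then Proposition~\ref{prop:X_cup=N} work, forcing $\bigcup_n K_n=N(f)$ despite the approximations being only Hausdorff-approximate at each finite stage. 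Because Player~II is allowed to build her own $\ve{K}$ rather than being stuck with the rigid $\Phi(f)$, she can simultaneously drive $\ve{K}$ into the residual set $\Ar$ and enforce the nesting inequalities; with your fixed $\Phi$ she has no such freedom. Finally, your treatment of the converse implication via the zero-one law is too vague to assess: the paper applies the zero-one law (Proposition~\ref{prop:zero-one}) to the set $\Ar=\{\ve{K}\in\K^{\N}:(\ve{K},f)\in\X\text{ for some }f\in G\}$, which it must first prove to be invariant under finite permutations (Proposition~\ref{prop:K's_invariant}) and to have the Baire property, the latter by showing $\X$ is analytic (Proposition~\ref{prop:X_analytic}); your sketch identifies neither the set to which the law is applied, nor the group action, nor why the set has the Baire property.
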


\subsection{Continuity of $N(f,a)$}
\begin{proposition}\label{prop:continuity}
 Suppose that $0<a<b$ and $\varepsilon>0$.
 Then there exists $\delta>0$ such that
 whenever $f,g\in C(I)$ satisfy $\lVert f-g\rVert<\delta$, we have
 $\tilde{N}(f,a)\subset B\bigl(\tilde{N}(g,b),\varepsilon\bigr)$ and
 $N(f,a)\subset B\bigl(N(g,b),\varepsilon\bigr)$.
\end{proposition}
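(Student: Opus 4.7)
The plan is to prove the pointwise inclusion separately for each of the four one-sided sets, i.e.\ $N^{\pm}(f,a)\subset B(N^{\pm}(g,b),\varepsilon)$ and $N_{\pm}(f,a)\subset B(N_{\pm}(g,b),\varepsilon)$; both inclusions in the statement then follow by taking the appropriate unions. I will treat $N^+$ in detail, since the other three cases are symmetric up to sign and orientation.

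First I would choose $\delta>0$ small enough that $2\delta/(b-a)<\min(\varepsilon,2^{-a}-2^{-b})$. Given $f,g\in C(I)$ with $\lVert f-g\rVert<\delta$ and a point $x\in N^+(f,a)$, the idea is to locate a nearby witness in $N^+(g,b)$ as a maximiser of the auxiliary function $h(y)=g(y)-by$ on the interval $[x,x+2^{-a}]$. Let $x'$ denote such a maximiser.

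Two elementary comparisons then do all the work. From $h(x')\ge h(x)$ one reads off $g(x')-g(x)\ge b(x'-x)$. On the other hand, the hypothesis $x\in N^+(f,a)$ gives $f(x')-f(x)\le a(x'-x)$, and adding the $2\delta$ coming from $\lVert f-g\rVert<\delta$ yields $g(x')-g(x)\le a(x'-x)+2\delta$. Combining these produces $(b-a)(x'-x)\le2\delta$, so $x'-x\le 2\delta/(b-a)$, which by the choice of $\delta$ is strictly less than both $\varepsilon$ and $2^{-a}-2^{-b}$. The latter bound forces $[x',x'+2^{-b}]\subset[x,x+2^{-a}]$ and $x'\in[0,1-2^{-b}]$, so the maximality of $h$ at $x'$ on the larger interval immediately gives $h(y)\le h(x')$, i.e.\ $g(y)-g(x')\le b(y-x')$, for every $y\in[x',x'+2^{-b}]$. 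Thus $x'\in N^+(g,b)$, while the bound $x'-x<\varepsilon$ gives $x\in B(N^+(g,b),\varepsilon)$.

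For $N_+$, $N^-$, and $N_-$ I would run exactly the same argument, replacing $h$ and its domain by $-g(y)-by$ on $[x,x+2^{-a}]$, $-g(y)+by$ on $[x-2^{-a},x]$, and $g(y)+by$ on $[x-2^{-a},x]$ respectively; the computation is identical up to signs. The only moment that requires thought is the bound $x'-x\le 2\delta/(b-a)$, and this is more or less forced: the slope gap $b-a$ must absorb the entire perturbation $2\delta$, and the length gap $2^{-a}-2^{-b}$ is precisely what supplies the room for the shorter interval $[x',x'+2^{-b}]$ to fit inside $[x,x+2^{-a}]$. I do not foresee any real obstacle beyond bookkeeping the four cases.
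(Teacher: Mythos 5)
Your proof is correct and follows essentially the same route as the paper's: both prove the $N^+$ case by locating a witness as a maximiser of the auxiliary function $y\mapsto g(y)-by$ on $[x,x+2^{-a}]$, run the same two comparisons to get $x'-x<2\delta/(b-a)$, and appeal to symmetry for the other three cases. The only cosmetic difference is that the paper first normalises $\varepsilon<2^{-a}-2^{-b}$ and then takes $\delta<\varepsilon(b-a)/2$, whereas you impose the constraint $2\delta/(b-a)<2^{-a}-2^{-b}$ directly on $\delta$; both achieve the same containment $[x',x'+2^{-b}]\subset[x,x+2^{-a}]$.
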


\begin{proof}
 We may assume that $\varepsilon<2^{-a}-2^{-b}$ without loss of generality.
 Choose $\delta>0$ with $\delta<\varepsilon(b-a)/2$.
 We shall show that this $\delta$ satisfies the required condition.
 It suffices to prove that $N^+(f,a)\subset B\bigl(N^+(g,b),\varepsilon\bigr)$.

 Take any $x\in N^+(f,a)$, and let $y_0\in[x,x+2^{-a}]$ be a point
 at which the continuous function $y\longmapsto g(y)-by$
 defined on $[x,x+2^{-a}]$ attains its maximum.
 It is enough to show that $x\le y_0<x+\varepsilon$ and
 $y_0\in N^+(g,b)$.

 The definition of $y_0$ gives $g(y_0)-by_0\ge g(x)-bx$, which implies
 \[
  b(y_0-x)\le g(y_0)-g(x)<f(y_0)-f(x)+2\delta\le a(y_0-x)+2\delta
 \]
 because $x\in N^+(f,a)$ and $y_0\in[x,x+2^{-a}]$.
 It follows that $y_0-x<2\delta/(b-a)<\varepsilon$.

 With the aim of proving $y_0\in N^+(g,b)$, take any $y\in[y_0,y_0+2^{-b}]$.
 Since
 \[
  x\le y_0\le y\le y_0+2^{-b}<x+\varepsilon+2^{-b}<x+2^{-a},
 \]
 the definition of $y_0$ again gives
 $g(y_0)-by_0\ge g(y)-by$,
 or equivalently $g(y)-g(y_0)\le b(y-y_0)$.
 This completes the proof.
\end{proof}

\subsection{Properties of continuously differentiable functions}
\begin{lemma}\label{lem:C1_continuity_single_function}
 If $f\in C^1(I)$ and $0<a<b$, then there exists $\delta>0$ such that
 $B\bigl(\tilde{N}(f,a),\delta\bigr)\subset\tilde{N}(f,b)$.
\end{lemma}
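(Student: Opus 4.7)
The plan is to prove the stronger inclusion $\tilde{N}(f,a)\subset\Int\tilde{N}(f,b)$, where the interior is taken in $I$. Once this is established, the compactness of $\tilde{N}(f,a)$ (a closed subset of $I$ by Proposition~\ref{prop:N(f)_Fsigma}) and the closedness of $I\setminus\Int\tilde{N}(f,b)$ force a positive distance between these two disjoint sets, and any $\delta$ smaller than this distance satisfies $B\bigl(\tilde{N}(f,a),\delta\bigr)\subset\tilde{N}(f,b)$.

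To establish the inclusion, I would argue by contradiction. If it fails, there exist sequences $x_n\in\tilde{N}(f,a)$ and $u_n\in I$ with $x_n\to x$, $u_n\to x$, and $u_n\notin\tilde{N}(f,b)$; the limit $x$ lies in $\tilde{N}(f,a)$ because that set is closed. Since $\tilde{N}(f,a)$ is the union of two sets of the form $N^{\pm}(f,a)$ or $N_{\pm}(f,a)$, I may assume $x$ belongs to a specific one of them; the four resulting subcases are entirely analogous, so I treat only $x\in N^+(f,a)$ in detail.

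For this subcase I use $N^+(f,b)\subset\tilde{N}(f,b)$ together with $x\le 1-2^{-a}<1-2^{-b}$ (so $u_n\in[0,1-2^{-b}]$ for large $n$) to extract from $u_n\notin N^+(f,b)$ a witness $y_n\in(u_n,u_n+2^{-b}]$ satisfying
\[
 f(y_n)-f(u_n)>b(y_n-u_n).
\]
After passing to a further subsequence, $y_n\to y\in[x,x+2^{-b}]$. If $y>x$, then $y\in(x,x+2^{-a}]$ because $2^{-b}<2^{-a}$, and $x\in N^+(f,a)$ yields $f(y)-f(x)\le a(y-x)$; combined with the limit of the witnessing inequality, $f(y)-f(x)\ge b(y-x)$, this forces $a\ge b$, contradicting $a<b$. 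If instead $y=x$, the mean value theorem supplies $\xi_n\in(u_n,y_n)$ with $f'(\xi_n)=(f(y_n)-f(u_n))/(y_n-u_n)>b$; since $\xi_n\to x$ and $f'$ is continuous, $f'(x)\ge b$, while $x\in N^+(f,a)$ gives $f'(x)=\lim_{y\downarrow x}(f(y)-f(x))/(y-x)\le a$, a contradiction.

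The main obstacle is the degenerate subcase $y=x$: one must extract the value $f'(x)$ from a difference quotient whose endpoints both approach $x$ without necessarily being equal to it, and this is precisely where the $C^1$ hypothesis (rather than mere differentiability at $x$) is essential, via the uniform continuity of $f'$ on $I$. The remaining three choices of $N$-set containing $x$ are dealt with by the same template, with the obvious reversal of the direction in which the witnessing point is sought and of the sign of the slope.
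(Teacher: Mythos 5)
Your proof is correct and follows essentially the same line as the paper's: argue by contradiction, extract a witness $y_n$ from $u_n\notin N^+(f,b)$, pass to a convergent subsequence, rule out $y\ne x$ by comparing the slope bounds $a$ and $b$, and in the remaining case $y=x$ apply the mean value theorem and the continuity of $f'$ to contradict $f'(x)\le a$. The only cosmetic difference is that you first prove the interior inclusion $\tilde{N}(f,a)\subset\Int\tilde{N}(f,b)$ (which the paper records as Corollary~\ref{cor:C1_continuity_single_interior}) and then recover the $\delta$-ball version by compactness, whereas the paper proves the $\delta$-ball version directly.
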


\begin{proof}
 By symmetry, it suffices to show that
 $B\bigl(N^+(f,a),\delta\bigr)\subset N^+(f,b)$
 for some $\delta>0$.
 Suppose that this is false.
 For each $n\in\N$,
 let $\delta_n=(2^{-a}-2^{-b})/n$ and
 take $x_n\in B\bigl(N^+(f,a),\delta_n\bigr)\setminus N^+(f,b)$.
 We may assume that $x_n$ converges, say to $x$.
 Observe that
 \[
  x\in\bigcap_{n=1}^{\infty}B\bigl(N^+(f,a),\delta_n+\lvert x-x_n\rvert\bigr)
   =N^+(f,a).
 \]

 Since
 \begin{align*}
  x_n&\in B\bigl(N^+(f,a),\delta_n\bigr)\setminus N^+(f,b)\\
     &\subset B([0,1-2^{-a}],2^{-a}-2^{-b})\setminus N^+(f,b)\\
     &\subset[0,1-2^{-b}]\setminus N^+(f,b),
 \end{align*}
 we may take $y_n\in(x_n,x_n+2^{-b}]$ with
 $f(y_n)-f(x_n)>b(y_n-x_n)$.
 We may assume that $y_n$ converges, say to $y$.
 The continuity of $f$ shows that $f(y)-f(x)\ge b(y-x)$,
 whereas we have $f(y)-f(x)\le a(y-x)$ because
 $x\in N^+(f,a)$ and $x\le y\le x+2^{-b}<x+2^{-a}$.
 It follows that $y=x$.

 By the mean value theorem, we may take $z_n\in(x_n,y_n)$ with
 \[
  f'(z_n)=\frac{f(y_n)-f(x_n)}{y_n-x_n}>b.
 \]
 Since both $x_n$ and $y_n$ converge to $x$, so does $z_n$.
 The continuity of $f'$ shows that $f'(x)\ge b$,
 which contradicts $x\in N^+(f,a)$.
\end{proof}

\begin{corollary}\label{cor:C1_continuity_single_interior}
 If $f\in C^1(I)$ and $0<a<b$, then
 $\tilde{N}(f,a)\subset\Int\tilde{N}(f,b)$.
\end{corollary}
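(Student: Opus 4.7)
The plan is to deduce this corollary directly from Lemma~\ref{lem:C1_continuity_single_function}; essentially no new work is required beyond unwinding the definition of interior. By Convention~\ref{conv:tilde}, it suffices to produce, for each interpretation of $\tilde{N}$, an open subset of $I$ that contains $\tilde{N}(f,a)$ and is contained in $\tilde{N}(f,b)$.

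First, I would apply Lemma~\ref{lem:C1_continuity_single_function} to obtain $\delta>0$ with $B\bigl(\tilde{N}(f,a),\delta\bigr)\subset\tilde{N}(f,b)$. Next, I would observe that the set $B\bigl(\tilde{N}(f,a),\delta\bigr)$ is, by its definition in Section~3.1, a union of the open balls $B(x,\delta)$ as $x$ ranges over $\tilde{N}(f,a)$, hence an open subset of $I$. Since it trivially contains $\tilde{N}(f,a)$ (each point $x\in\tilde{N}(f,a)$ lies in $B(x,\delta)$) and is contained in $\tilde{N}(f,b)$, the definition of interior yields
\[
 \tilde{N}(f,a)\subset B\bigl(\tilde{N}(f,a),\delta\bigr)\subset\Int\tilde{N}(f,b),
\]
which is the desired inclusion. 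The same $\delta$ (or the smaller of the two $\delta$'s obtained by applying the lemma separately to $\hat{N}$ and $\check{N}$) works for both interpretations of $\tilde{N}$.

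There is no real obstacle here: all the genuine content was already extracted in the preceding lemma, whose proof via the mean value theorem together with compactness arguments on convergent subsequences $x_n,y_n,z_n$ does the actual work. The corollary is simply a repackaging of that statement in terms of the interior operator rather than $\delta$-neighbourhoods.
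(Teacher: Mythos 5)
Your proposal is correct and is exactly the paper's argument: the paper simply says the corollary is immediate from Lemma~\ref{lem:C1_continuity_single_function}, and your unwinding (the $\delta$-neighbourhood is an open set sandwiched between $\tilde{N}(f,a)$ and $\tilde{N}(f,b)$) is the intended reasoning.
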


\begin{proof}
 Immediate from Lemma~\ref{lem:C1_continuity_single_function}.
\end{proof}

\begin{proposition}\label{prop:C1_continuity}
 Suppose that $f\in C^1(I)$ and $0<a<b$.
 Then there exists $\delta>0$ such that
 $B\bigl(\tilde{N}(g,a),\delta\bigr)\subset\tilde{N}(f,b)$
 for every $g\in B(f,\delta)$.
\end{proposition}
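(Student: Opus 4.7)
The plan is to interpose an intermediate parameter $c$ strictly between $a$ and $b$, and then to combine two ingredients that are already in hand: the joint continuity result Proposition~\ref{prop:continuity}, which controls $\tilde{N}(g,a)$ in terms of $\tilde{N}(f,c)$ once $g$ is uniformly close to $f$; and the single-function $C^1$ estimate Lemma~\ref{lem:C1_continuity_single_function}, which absorbs a further $\delta_0$-neighbourhood of $\tilde{N}(f,c)$ into $\tilde{N}(f,b)$.

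First I would fix any $c \in (a,b)$ and apply Lemma~\ref{lem:C1_continuity_single_function} to the pair $c < b$ to produce some $\delta_0 > 0$ with
\[
 B\bigl(\tilde{N}(f,c),\delta_0\bigr) \subset \tilde{N}(f,b).
\]
I would then invoke Proposition~\ref{prop:continuity} with the roles of the two continuous functions swapped (legitimate since $\lVert f-g\rVert$ is symmetric), applied to the pair $a < c$ and to $\varepsilon = \delta_0/2$. This will yield some $\delta_1 > 0$ such that every $g \in C(I)$ with $\lVert f-g\rVert < \delta_1$ satisfies
\[
 \tilde{N}(g,a) \subset B\bigl(\tilde{N}(f,c),\delta_0/2\bigr).
\]

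Finally I would set $\delta = \min(\delta_1,\delta_0/2)$. For any $g \in B(f,\delta)$ and any $x \in B(\tilde{N}(g,a),\delta)$, picking $z \in \tilde{N}(g,a)$ with $\lvert x-z\rvert < \delta$ and then $w \in \tilde{N}(f,c)$ with $\lvert z-w\rvert < \delta_0/2$ will give $\lvert x-w\rvert < \delta_0/2 + \delta \le \delta_0$, whence $x \in B\bigl(\tilde{N}(f,c),\delta_0\bigr) \subset \tilde{N}(f,b)$, which is the required inclusion.

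I do not expect any substantive obstacle here, since the two tools are precisely calibrated for this purpose; the only point requiring care is keeping track of two separate smallness requirements on $\delta$, namely that $\delta \le \delta_1$ so that Proposition~\ref{prop:continuity} applies to the pair $(f,g)$, and that $\delta \le \delta_0/2$ so that adding $\delta$ to the prior neighbourhood of radius $\delta_0/2$ still fits inside the radius $\delta_0$ produced by Lemma~\ref{lem:C1_continuity_single_function}.
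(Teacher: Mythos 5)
Your proposal is correct and takes essentially the same route as the paper: the paper also interposes $c=(a+b)/2$, applies Lemma~\ref{lem:C1_continuity_single_function} to the pair $c<b$ to get a radius $2\varepsilon$, applies Proposition~\ref{prop:continuity} to the pair $a<c$ to absorb $\tilde{N}(g,a)$ into a $\varepsilon$-neighbourhood of $\tilde{N}(f,c)$, and sets $\delta=\min\{\varepsilon,\tau\}$ so that a final triangle inequality closes the argument. Your $\delta_0$, $\delta_0/2$, $\delta_1$ play exactly the roles of the paper's $2\varepsilon$, $\varepsilon$, $\tau$, and your remark about swapping the roles of $f$ and $g$ in Proposition~\ref{prop:continuity} is the same (implicit) move.
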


\begin{proof}
 Set $c=(a+b)/2$, so that $0<a<c<b$.
 By Lemma~\ref{lem:C1_continuity_single_function}
 we may find $\varepsilon>0$
 with $B\bigl(\tilde{N}(f,c),2\varepsilon\bigr)\subset\tilde{N}(f,b)$,
 and by Proposition~\ref{prop:continuity}
 we may find $\tau>0$ such that
 $\tilde{N}(g,a)\subset B\bigl(\tilde{N}(f,c),\varepsilon\bigr)$
 for all $g\in B(f,\tau)$.
 We set $\delta=\min\{\varepsilon,\tau\}$.
 Then for every $g\in B(f,\delta)$, we have
 \[
  B\bigl(\tilde{N}(g,a),\delta\bigr)
  \subset B\bigl(\tilde{N}(f,c),\delta+\varepsilon\bigr)
  \subset B\bigl(\tilde{N}(f,c),2\varepsilon\bigr)
  \subset\tilde{N}(f,b).\qedhere
 \]
\end{proof}

\begin{lemma}\label{lem:C1_open_interval}
 Suppose that $f\in C^1(I)$ and $0<a<c<b$.
 Then there exists $\varepsilon>0$ such that
 for each $x\in[0,1-2^{-a}]\setminus N^+(f,b)$,
 we may find $y\in(x+\varepsilon,x+2^{-b}]$
 with $f(y)-f(x)>c(y-x)$.
\end{lemma}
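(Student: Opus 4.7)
The plan is to exploit the uniform continuity of $f'$ on the compact interval $I$. If $x\notin N^+(f,b)$ then, since $[0,1-2^{-a}]\subset[0,1-2^{-b}]$, the definition of $N^+(f,b)$ produces some $y_0\in(x,x+2^{-b}]$ with $f(y_0)-f(x)>b(y_0-x)$, and the mean value theorem then yields $z_0\in(x,y_0)$ with $f'(z_0)>b$. The difficulty is that $y_0$ might lie arbitrarily close to $x$ (think of a function with a sharp bump just to the right of $x$), so we cannot simply take $y=y_0$; we need a uniform lower bound on how far to the right of $x$ we must look.

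First, I would use uniform continuity of $f'$ on $I$ to pick $\eta\in(0,2^{-b-1}]$ such that $|f'(s)-f'(t)|<b-c$ whenever $|s-t|<2\eta$, and then set $\varepsilon=\eta/2$. I would then split into two cases according to the size of $y_0-x$.

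\textbf{Case 1:} $y_0-x>\varepsilon$. Then $y:=y_0\in(x+\varepsilon,x+2^{-b}]$ and $f(y)-f(x)>b(y-x)>c(y-x)$, so we are done immediately.

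\textbf{Case 2:} $y_0-x\le\varepsilon$. Here I would set $y:=x+\eta$; then $y\in(x+\varepsilon,x+2^{-b}]$ since $\eta>\varepsilon$ and $\eta\le 2^{-b-1}<2^{-b}$. For every $t\in[x,y]$ we have $|t-z_0|\le\max(z_0-x,y-z_0)<\eta+\varepsilon<2\eta$, so the choice of $\eta$ gives $f'(t)>f'(z_0)-(b-c)>c$, and hence $f(y)-f(x)=\int_x^{y}f'(t)\,dt>c(y-x)$, as required.

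The main obstacle is exactly this Case 2: when the witness $y_0$ huddles near $x$. The key idea that resolves it is that in this situation the mean value theorem forces $f'$ to exceed $b$ at some point very close to $x$, and then uniform continuity of $f'$ propagates the bound $f'>c$ over an entire interval of length $\eta$ around $x$, which allows us to pick $y=x+\eta$ bounded away from $x$ by a universal constant depending only on $f,a,b,c$.
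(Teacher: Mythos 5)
Your proof is correct, and it takes a genuinely different route from the paper's. The paper argues by contradiction: it assumes the conclusion fails for every $\varepsilon=1/n$, extracts a convergent subsequence $x_n\to x$, shows $f(y)-f(x)\le c(y-x)$ on $(x,x+2^{-b})$ so that $f'(x)\le c$, and then uses continuity of $f'$ at the single point $x$ together with the mean value theorem to force $x_n\in N^+(f,b)$ for large $n$, contradicting the choice of $x_n$. You instead give a direct, quantitative argument: uniform continuity of $f'$ on the compact interval $I$ furnishes a modulus $\eta$ once and for all, and the case split handles the only obstruction (a witness $y_0$ huddling near $x$) by observing that the MVT then puts a point $z_0$ with $f'(z_0)>b$ close to $x$, whence $f'>c$ on all of $[x,x+\eta]$ and the fundamental theorem of calculus finishes. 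Your approach has the advantage of producing an explicit $\varepsilon$ from the modulus of continuity of $f'$ and avoiding the indirect compactness/subsequence machinery; the paper's approach is more in the local style it uses elsewhere (e.g.\ Lemma~\ref{lem:C1_continuity_single_function}) and only invokes pointwise continuity of $f'$ rather than uniform continuity, though of course on $I$ these coincide. Both proofs rest on the same fact, namely that $f'$ being close to a value $>b$ near a point keeps it $>c$ on a fixed-size neighbourhood; you just make the ``fixed size'' uniform from the start rather than deriving a contradiction from its failure.
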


\begin{proof}
 Suppose that the lemma is false.
 Then for each $n\in\N$, we may find $x_n\in[0,1-2^{-a}]\setminus N^+(f,b)$
 such that $f(y)-f(x_n)\le c(y-x_n)$ for all $y\in(x_n+1/n,x_n+2^{-b}]$.
 We may assume that $x_n$ converges, say to $x\in[0,1-2^{-a}]\subset[0,1-2^{-b}]$.

 Firstly, we prove that
 $f(y)-f(x)\le c(y-x)$ for all $y\in(x,x+2^{-b})$.
 Fix such $y$.
 For sufficiently large $n$, since $y\in(x_n+1/n,x_n+2^{-b})$,
 we have $f(y)-f(x_n)\le c(y-x_n)$ by the choice of $x_n$.
 Letting $n\to\infty$, we obtain $f(y)-f(x)\le c(y-x)$.

 Now, it follows that $f'(x)\le c$, and so $f'\le b$ in some neighbourhood of $x$
 because $f\in C^1(I)$.
 Take $n\in\N$ so large that the interval $[x_n,x_n+1/n]$
 is contained in the neighbourhood.
 Then the mean value theorem shows that
 $f(y)-f(x_n)\le b(y-x_n)$ for all $y\in[x_n,x_n+1/n]$.
 This, together with the choice of $x_n$, implies that
 $x_n\in N^+(f,b)$, a contradiction.
\end{proof}

\begin{proposition}\label{prop:C1_open_interval}
 Suppose that $f\in C^1(I)$ and $0<a<b$.
 Then there exists $l>0$ such that every set of one of the following forms
 contains an open interval of length $l$:
 \begin{enumerate}
  \item $\{y\in[x,x+2^{-a}]\mid f(y)-f(x)>a(y-x)\}$
        for $x\in[0,1-2^{-a}]\setminus N^+(f,b)$;
  \item $\{y\in[x,x+2^{-a}]\mid f(y)-f(x)<-a(y-x)\}$
        for $x\in[0,1-2^{-a}]\setminus N_+(f,b)$;
  \item $\{y\in[x-2^{-a},x]\mid f(y)-f(x)<a(y-x)\}$
        for $x\in[2^{-a},1]\setminus N^-(f,b)$;
  \item $\{y\in[x-2^{-a},x]\mid f(y)-f(x)>-a(y-x)\}$
        for $x\in[2^{-a},1]\setminus N_-(f,b)$.
 \end{enumerate}
\end{proposition}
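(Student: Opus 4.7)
By symmetry it suffices to prove case~(1). Indeed, case~(2) reduces to case~(1) applied to $-f$ (via the identities $N_{\pm}(f,b)=N^{\mp}(-f,b)$), case~(4) reduces to case~(1) applied to $\tilde f(t)=f(1-t)$, and case~(3) reduces to case~(2) applied to $\tilde f$; taking the minimum of the four constants so obtained yields a uniform $l$ valid in all four situations.

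For case~(1), set $c=(a+b)/2$, so that $0<a<c<b$, and invoke Lemma~\ref{lem:C1_open_interval} to obtain $\varepsilon>0$ such that every $x\in[0,1-2^{-a}]\setminus N^+(f,b)$ admits a point $y=y(x)\in(x+\varepsilon,x+2^{-b}]$ with $f(y)-f(x)>c(y-x)$. Since $f\in C^1(I)$ on the compact interval $I$, the quantity $M=\max_{t\in I}\lvert f'(t)\rvert$ is finite. The idea is to use the $C^1$ bound on $f'$ to upgrade the strict inequality at the single point $y$ to a strict inequality, with the weaker slope $a$, on an interval around $y$ whose length is independent of $x$.

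The key step is the decomposition
\begin{equation*}
 f(z)-f(x)-a(z-x)=\bigl[f(y)-f(x)-a(y-x)\bigr]+\bigl[f(z)-f(y)-a(z-y)\bigr],
\end{equation*}
in which the first bracket exceeds $(c-a)(y-x)>(c-a)\varepsilon$ by the choice of $y$, while the second is bounded below by $-(M+a)\lvert z-y\rvert$ via the mean value theorem. Setting
\begin{equation*}
 l=\min\Bigl\{\varepsilon,\ 2^{-a}-2^{-b},\ \frac{(c-a)\varepsilon}{M+a}\Bigr\},
\end{equation*}
one checks that $(y-l/2,y+l/2)\subset[x,x+2^{-a}]$ (the first two terms in the minimum place the endpoints correctly: $y-l/2>x+\varepsilon/2>x$ and $y+l/2\le x+2^{-b}+(2^{-a}-2^{-b})/2<x+2^{-a}$), and that the displayed expression stays strictly positive on this interval (the third term ensures $(M+a)(l/2)\le(c-a)\varepsilon/2$, yielding a lower bound of $(c-a)\varepsilon/2>0$). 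This produces an open interval of length $l$ inside the set of case~(1).

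The substantive content is already encoded in Lemma~\ref{lem:C1_open_interval}; no genuine obstacle remains. The only point of vigilance is to verify that each of the three quantities entering the definition of $l$ is strictly positive and depends only on $f$, $a$, $b$, so that $l$ serves uniformly in $x$, and that the same construction applied to $-f$, $\tilde f$, and $-\tilde f$ supplies the other three cases.
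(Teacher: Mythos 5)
Your proof is correct and follows essentially the same route as the paper: set $c=(a+b)/2$, invoke Lemma~\ref{lem:C1_open_interval} to obtain $\varepsilon$, and use the $C^1$ bound on $f'$ together with the two-term decomposition to inflate the single point $y(x)$ into an interval of length depending only on $\varepsilon$, $c-a$, and $\lVert f'\rVert$. One small slip in the symmetry paragraph: the identity should read $N_{\pm}(f,b)=N^{\pm}(-f,b)$, not $N^{\mp}(-f,b)$; the reductions you then describe (case~(2) from case~(1) via $-f$, etc.) are nevertheless correct.
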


\begin{proof}
 Set $c=(a+b)/2$ and choose $\varepsilon>0$ as in Lemma~\ref{lem:C1_open_interval}.
 Then take $l>0$ so that $l/2<\min\{\varepsilon,2^{-a}-2^{-b}\}$
 and $(\lVert f'\rVert+a)l/2<(c-a)\varepsilon$.
 We shall show that this $l$ satisfies the required condition.
 By symmetry, we only need to look at sets of the first form.

 Let $x\in[0,1-2^{-a}]\setminus N^+(f,b)$ and set
 \[
  S=\{y\in[x,x+2^{-a}]\mid f(y)-f(x)>a(y-x)\}.
 \]
 By the choice of $\varepsilon$, we may find $t\in(x+\varepsilon,x+2^{-b}]$
 with $f(t)-f(x)>c(t-x)$.
 It suffices to show that $S$ contains the open interval $(t-l/2,t+l/2)$.
 If $y\in(t-l/2,t+l/2)$, then since
 \begin{align*}
  y&>t-l/2>x+\varepsilon-l/2>x,\\
  y&<t+l/2\le x+2^{-b}+l/2<x+2^{-a},
 \end{align*}
 and
 \begin{align*}
  f(y)-f(x)-a(y-x)
  &=\bigl(f(y)-f(t)\bigr)+\bigl(f(t)-f(x)\bigr)-a(y-x)\\
  &>-\lVert f'\rVert\lvert y-t\rvert+c(t-x)-a(y-x)\\
  &=(c-a)(t-x)-\lVert f'\rVert\lvert y-t\rvert-a(y-t)\\
  &\ge(c-a)(t-x)-(\lVert f'\rVert+a)\lvert y-t\rvert\\
  &>(c-a)\varepsilon-(\lVert f'\rVert+a)l/2\\
  &>0,
 \end{align*}
 it follows that $y\in S$.
\end{proof}

\subsection{Bump functions}
\begin{definition}\label{definition:bump}
 Let $\hat{H}$ and $\check{H}$ be disjoint finite subsets of $I$,
 and $h$ and $w$ be positive numbers.
 A \emph{bump function} of height $h$ and width $w$
 located at $\hat{H}$ and $\check{H}$ is a function
 $\varphi\in C^1(I)$ with the following properties:
 \begin{itemize}
  \item $\lVert\varphi\rVert=h$;
  \item $\varphi(x)=h$ for all $x\in\hat{H}$ and
        $\varphi(x)=-h$ for all $x\in\check{H}$;
  \item $\{x\in I\mid\varphi(x)>0\}\subset B(\hat{H},w)$ and
        $\{x\in I\mid\varphi(x)<0\}\subset B(\check{H},w)$.
 \end{itemize}
\end{definition}

\begin{remark}
 If $\hat{H}$, $\check{H}$, $h$, and $w$ satisfy the conditions
 at the beginning of the definition above,
 there exists a bump function of height $h$ and width $w$
 located at $\hat{H}$ and $\check{H}$.
\end{remark}

\begin{proposition}\label{prop:bump_easy}
 Let $f\in C(I)$ and $a>0$.
 Suppose that $\varphi$ is a bump function of height $h>0$ and width $w>0$
 located at $\hat{H}$ and $\check{H}$,
 where $\hat{H}$ and $\check{H}$ are disjoint finite subsets of $I$.
 Then, setting $g=f+\varphi$, we have
 $\tilde{H}\cap\tilde{N}(f,a)\subset\tilde{N}(g,a)$.
\end{proposition}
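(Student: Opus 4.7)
The plan is to observe that the proposition really boils down to a simple monotonicity observation about the bump function: at every point $x \in \hat{H}$, the function $\varphi$ attains its global maximum $h$ (because $\lVert\varphi\rVert = h$ and $\varphi(x) = h$), and at every point $x \in \check{H}$ it attains its global minimum $-h$. Hence, for every $y \in I$,
\[
 x \in \hat{H} \implies \varphi(y) - \varphi(x) \le 0, \qquad x \in \check{H} \implies \varphi(y) - \varphi(x) \ge 0.
\]
Adding $\varphi$ to $f$ therefore can only decrease the difference quotients $\bigl(g(y) - g(x)\bigr)/(y - x)$ of the right sign at points of $\hat{H}$, and only increase them at points of $\check{H}$.

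Next I would split into the two cases dictated by Convention~\ref{conv:tilde}. For $\hat{H} \cap \hat{N}(f,a) \subset \hat{N}(g,a)$, take $x \in \hat{H} \cap \hat{N}(f,a) = \hat{H} \cap \bigl(N^+(f,a) \cup N_-(f,a)\bigr)$. If $x \in N^+(f,a)$, then for $y \in [x, x + 2^{-a}]$,
\[
 g(y) - g(x) = \bigl(f(y) - f(x)\bigr) + \bigl(\varphi(y) - \varphi(x)\bigr) \le a(y-x) + 0,
\]
so $x \in N^+(g,a) \subset \hat{N}(g,a)$. If instead $x \in N_-(f,a)$, the same estimate with $y \in [x - 2^{-a}, x]$ and the inequality $f(y) - f(x) \le -a(y-x)$ yields $x \in N_-(g,a) \subset \hat{N}(g,a)$.

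The verification of $\check{H} \cap \check{N}(f,a) \subset \check{N}(g,a)$ is entirely symmetric: at $x \in \check{H}$ one has $\varphi(y) - \varphi(x) \ge 0$, so the inequalities defining $N_+(f,a)$ and $N^-(f,a)$ are preserved when passing from $f$ to $g$. There is no genuine obstacle here; the only thing to be careful about is that the domain conditions $x \in [0, 1 - 2^{-a}]$ or $x \in [2^{-a}, 1]$ depend only on $a$, not on $f$, so they transfer directly from $f$ to $g$. The proof is essentially a bookkeeping exercise once the max/min property of $\varphi$ on $\hat{H} \cup \check{H}$ is noted.
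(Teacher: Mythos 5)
Your proof is correct and follows essentially the same line as the paper's: the key observation that $\varphi(y)-\varphi(x)\le 0$ for $x\in\hat{H}$ (since $\varphi(x)=h=\lVert\varphi\rVert$) is exactly what the paper uses, and you simply spell out all four subcases rather than appealing to symmetry. No gaps.
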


\begin{proof}
 It suffices to show that $\hat{H}\cap\hat{N}(f,a)\subset\hat{N}(g,a)$.
 Let $x\in\hat{H}\cap\hat{N}(f,a)$.
 Then $x\in N^+(f,a)\cup N_-(f,a)$, and we may assume that
 $x\in N^+(f,a)$ by symmetry.
 We have $x\in[0,1-2^{-a}]$ by the definition of $N^+(f,a)$;
 if $y\in[x,x+2^{-a}]$, then
 \[
  g(y)-g(x)
  =\bigl(f(y)+\varphi(y)\bigr)-\bigl(f(x)+h\bigr)
  \le f(y)-f(x)
  \le a(y-x).
 \]
 It follows that $x\in N^+(g,a)$.
\end{proof}

\begin{proposition}\label{prop:bump_difficult}
 Suppose that $f\in C^1(I)$, $0<a<b$, and $h>0$.
 Then there exists $\mu>0$ with the following property:
 \begin{quotation}
  Suppose that $\varphi$ is a bump function of height $h$ and width $w>0$
  located at $\hat{H}$ and $\check{H}$, where $\hat{H}$ and $\check{H}$
  are disjoint finite subsets of $I$
  satisfying $B(\tilde{H},\mu)=I$.
  Then, setting $g=f+\varphi$, we have
  $\tilde{N}(g,a)\subset\tilde{N}(f,b)\cap B(\tilde{H},w)$.
 \end{quotation}
\end{proposition}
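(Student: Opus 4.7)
The plan is to decompose the desired conclusion into the two inclusions $\tilde{N}(g,a) \subset B(\tilde{H}, w)$ and $\tilde{N}(g,a) \subset \tilde{N}(f,b)$, and to choose a single $\mu > 0$ that forces both. First I apply Proposition~\ref{prop:C1_open_interval} to the data $f$, $a$, $b$ to obtain a universal $l = l(f,a,b) > 0$, and then I take $\mu > 0$ small enough that
\[
  2\mu < \min\!\left(\frac{h}{a + \lVert f'\rVert},\; l,\; 2^{-a}\right).
\]

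For $\tilde{N}(g,a) \subset B(\tilde{H}, w)$, I argue by contradiction in the $N^+$ subcase, the three remaining cases being symmetric. Suppose $x \in N^+(g,a)$ but $x \notin B(\hat{H}, w)$. Since $\{\varphi > 0\} \subset B(\hat{H}, w)$, we must have $\varphi(x) \le 0$. The density $B(\hat{H}, \mu) = I$ together with $2\mu < 2^{-a}$ supplies some $\hat h \in \hat{H} \cap [x, x + 2\mu] \subset [x, x + 2^{-a}]$, and
\[
  g(\hat h) - g(x) = \bigl(f(\hat h) - f(x)\bigr) + \bigl(h - \varphi(x)\bigr) \ge -\lVert f'\rVert(\hat h - x) + h,
\]
whereas $x \in N^+(g,a)$ gives $g(\hat h) - g(x) \le a(\hat h - x) \le 2a\mu$. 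Together these force $h \le 2(a + \lVert f'\rVert)\mu$, contradicting the choice of $\mu$. The cases $N_-(g,a)$, $N_+(g,a)$, and $N^-(g,a)$ are handled identically, with the obvious changes of side and of $\hat{H}$ versus $\check{H}$.

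For $\tilde{N}(g,a) \subset \tilde{N}(f,b)$, Proposition~\ref{prop:C1_open_interval} does the heavy lifting. Staying with the $N^+$ subcase, suppose $x \in N^+(g,a)$ but $x \notin N^+(f,b)$. That proposition produces an open interval $J \subset [x, x + 2^{-a}]$ of length $l$ on which $f(y) - f(x) > a(y - x)$. Combined with $g(y) - g(x) \le a(y-x)$ and $g = f + \varphi$, this forces $\varphi(y) < \varphi(x) \le h$ for every $y \in J$, so $J \cap \hat{H} = \emptyset$. But $B(\hat{H}, \mu) = I$ with $2\mu < l$ puts some point of $\hat{H}$ within distance $\mu$ of the midpoint of $J$, hence inside $J$ — a contradiction. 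So $x \in N^+(f,b) \subset \hat{N}(f,b)$. The remaining cases $N_-(g,a)$, $N_+(g,a)$, $N^-(g,a)$ are treated identically using the matching case of Proposition~\ref{prop:C1_open_interval} and the density of $\hat{H}$ or $\check{H}$.

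The hurdle I initially anticipated is the passage from $g$ back to $f$: since $\lVert g - f \rVert = h$ need not be small, Proposition~\ref{prop:C1_continuity} cannot be applied directly. What rescues the argument is that Proposition~\ref{prop:C1_open_interval} converts the failure of $x \in N^{\pm}(f,b)$ or $N_{\pm}(f,b)$ into a macroscopic open interval of bad points, precisely the object that the density hypothesis $B(\tilde{H}, \mu) = I$ is designed to forbid.
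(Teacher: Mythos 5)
Your proof is correct and follows essentially the same route as the paper's: choose $\mu$ small against $l$ from Proposition~\ref{prop:C1_open_interval}, against $2^{-a}$, and against $h/(\lVert f'\rVert+a)$; then split the conclusion into $\tilde{N}(g,a)\subset\tilde{N}(f,b)$ (forced by the length-$l$ interval of bad points meeting $\tilde{H}$) and $\tilde{N}(g,a)\subset B(\tilde{H},w)$ (forced by the nearby point of $\tilde{H}$ pushing $\varphi(x)>0$). The only stylistic difference is that you argue both inclusions by contradiction while the paper proves the second one directly by showing $\varphi(x)>0$.
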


\begin{proof}
 Choose $l>0$ as in Proposition~\ref{prop:C1_open_interval}.
 Take $\mu>0$ so small that $\mu<l/2$, $2\mu<2^{-a}$,
 and $2\mu(\lVert f'\rVert+a)<h$.
 We shall show that this $\mu$ satisfies the required condition.
 Let $\varphi$ and $g$ be as in the statement.
 By symmetry, it suffices to show that
 $N^+(g,a)\subset N^+(f,b)\cap B(\hat{H},w)$.
 Let $x\in N^+(g,a)$.

 Firstly, we show that $x\in N^+(f,b)$.
 Assume, to derive a contradiction, that $x\notin N^+(f,b)$.
 Then, since
 \[
  x\in N^+(g,a)\setminus N^+(f,b)\subset[0,1-2^{-a}]\setminus N^+(f,b),
 \]
 the set $\{y\in[x,x+2^{-a}]\mid f(y)-f(x)>a(y-x)\}$ contains an
 open interval of length $l$.
 Because $B(\hat{H},l/2)\supset B(\hat{H},\mu)=I$,
 we may find $y\in\hat{H}$ such that
 $y\in[x,x+2^{-a}]$ and $f(y)-f(x)>a(y-x)$.
 Then
 \[
  g(y)-g(x)=\bigl(f(y)+h\bigr)-\bigl(f(x)+\varphi(x)\bigr)
  \ge f(y)-f(x)>a(y-x),
 \]
 which contradicts the assumption that $x\in N^+(g,a)$.

 Secondly, we show that $x\in B(\hat{H},w)$.
 Because $B(\hat{H},\mu)=I$, we may find $y\in[x,x+2\mu]\cap\hat{H}$.
 Then
 \begin{align*}
  a(y-x)&\ge g(y)-g(x)=\bigl(f(y)+h\bigr)-\bigl(f(x)+\varphi(x)\bigr)\\
        &\ge h-\varphi(x)-\lVert f'\rVert(y-x),
 \end{align*}
 which implies that
 \[
  \varphi(x)\ge h-(\lVert f'\rVert+a)(y-x)\ge h-2\mu(\lVert f'\rVert+a)>0.
 \]
 It follows that $x\in B(\hat{H},w)$.
\end{proof}

\begin{definition}\label{definition:mu}
 If $f\in C^1(I)$, $0<a<b$ and $h>0$,
 then $\mu(f,a,b,h)$ denotes a positive number $\mu$
 with the property in Proposition~\ref{prop:bump_difficult}.
\end{definition}

\section{A topological zero-one law and a key proposition}
This section uses some terminology and concepts in descriptive set theory;
see \cite{Kechris} for details.

\subsection{A topological zero-one law}
\begin{convention}
 We shall use boldface letters to denote sequences, and
 denote a term of a sequence by the corresponding normal letter
 accompanied with a subscript.
 For example, the $n$th term of a sequence $\ve{x}$ is $x_n$.
\end{convention}

\begin{definition}
 Let $X$ be a set.
 A subset $A$ of $X^{\N}$ is said to be \emph{invariant under finite permutations}
 if for every permutation $\sigma$ on $\N$ that fixes all but finitely many
 positive integers and for every $\ve{x}\in A$,
 we have $(x_{\sigma(n)})\in A$.
\end{definition}

\begin{proposition}[{{\cite[Theorem~8.46]{Kechris}}}]\label{prop:zero-one-Kechris}
 Let $X$ be a Baire space and $G$ a group of homeomorphisms on $X$
 with the property that for every pair of nonempty open subsets $U$ and $V$ of $X$,
 there exists $\varphi\in G$ such that $\varphi(U)\cap V\ne\emptyset$.
 Suppose that a subset $A$ of $X$ has the Baire property
 and that $\varphi(A)=A$ for every $\varphi\in G$.
 Then $A$ is either meagre or residual.
\end{proposition}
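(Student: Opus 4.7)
The plan is to exploit the Baire property of $A$ together with the topological transitivity condition on $G$ to pass information about $A$ between disjoint open sets, concluding that the "non-meagre part" of $A$ must fill up the whole space.

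First, I would invoke the Baire property of $A$ to write $A=U\triangle M$, where $U$ is open in $X$ and $M$ is meagre. I then assume $A$ is not meagre and aim to show that $X\setminus A$ is meagre, which by $X\setminus A=(X\setminus U)\triangle M$ reduces to proving that $X\setminus U$ is meagre. Note that if $U=\emptyset$ then $A\subset M$ would already be meagre, so $U$ is nonempty.

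The heart of the argument is a contradiction step: suppose $X\setminus U$ is not meagre. Since $X$ is a Baire space and $X\setminus U$ is closed, a closed non-meagre set must have nonempty interior, so $V:=\Int(X\setminus U)$ is a nonempty open set, disjoint from $U$ by construction. By the transitivity hypothesis, pick $\varphi\in G$ with $W:=\varphi(U)\cap V\neq\emptyset$. Because $\varphi$ is a homeomorphism, $\varphi(M)$ is meagre; since $\varphi(A)=A$, writing $A=\varphi(U)\triangle\varphi(M)$ shows that $A\triangle\varphi(U)$ is meagre. Combining with $A\triangle U$ meagre yields that
\[
 U\triangle\varphi(U)=(A\triangle U)\triangle(A\triangle\varphi(U))
\]
is meagre. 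But $W\subset\varphi(U)\cap V\subset\varphi(U)\setminus U\subset U\triangle\varphi(U)$, so $W$ is a nonempty open meagre set in the Baire space $X$, which is impossible.

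The only real obstacle is the closed-set step: in the Baire space $X$, a closed set $F$ is meagre if and only if it is nowhere dense, so failure of meagreness of $X\setminus U$ must manifest as a genuine open region $V$ on which to apply transitivity. Once that move is made, the rest is symbolic manipulation with symmetric differences together with the fact that homeomorphisms preserve meagre sets, and the $G$-invariance of $A$ transports the open approximant $U$ of $A$ onto $\varphi(U)$, forcing the two open approximants to agree modulo a meagre set even though they meet in the nonempty open set $W$.
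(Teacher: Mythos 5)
Your proof is correct, and it follows the standard argument for the topological zero-one law (the paper itself gives no proof, deferring entirely to Kechris, Theorem~8.46, whose proof is essentially the one you have reproduced: decompose $A=U\triangle M$ via the Baire property, use $G$-invariance and topological transitivity to show $U\triangle\varphi(U)$ is meagre, and derive a contradiction from the existence of a nonempty open subset of a meagre set). One minor remark: the implication that a closed non-meagre set has nonempty interior does not actually use the Baire hypothesis (a closed set with empty interior is nowhere dense, hence meagre, in any topological space); the Baire hypothesis is only genuinely needed at the final step, where you conclude that the nonempty open set $W$ cannot be meagre.
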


\begin{remark}
 If $G$ is a group of bijections on a set $X$ and $A$ is a subset of $X$,
 then the condition that $\varphi(A)=A$ for all $\varphi\in G$
 is equivalent to the condition that $\varphi(A)\subset A$ for all $\varphi\in G$.
\end{remark}

For $n\in\N$, set $[n]=\{1,\dotsc,n\}$.

\begin{proposition}\label{prop:zero-one}
 Let $X$ be a Baire space and $A$ a subset of $X^{\N}$
 that is invariant under finite permutations and has the Baire property.
 Then $A$ is either meagre or residual.
\end{proposition}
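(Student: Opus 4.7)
The plan is to apply the topological zero-one law of Proposition~\ref{prop:zero-one-Kechris} to the product space $X^{\N}$, taking as the acting group $G$ the natural action of finitely supported permutations of $\N$ on coordinates. (In the applications of this proposition later in the paper $X$ will always be Polish, so $X^{\N}$ is again a Baire space, as needed to invoke Proposition~\ref{prop:zero-one-Kechris}.)

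First I would verify the formal setup. For each permutation $\sigma$ of $\N$ that fixes all but finitely many positive integers, the map $\varphi_{\sigma}\colon X^{\N}\to X^{\N}$ defined by $\varphi_{\sigma}(\ve{x})=(x_{\sigma(n)})$ is a homeomorphism with inverse $\varphi_{\sigma^{-1}}$, and the collection $G$ of all such $\varphi_{\sigma}$ forms a group of homeomorphisms of $X^{\N}$. The hypothesis that $A$ is invariant under finite permutations is precisely $\varphi(A)\subset A$ for every $\varphi\in G$, which by the remark following Proposition~\ref{prop:zero-one-Kechris} is equivalent to $\varphi(A)=A$ for every $\varphi\in G$.

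The substantive step is the mixing condition: that for every pair of nonempty open subsets $U,V$ of $X^{\N}$ there exists $\varphi\in G$ with $\varphi(U)\cap V\ne\emptyset$. I would reduce to the case where $U=\prod U_n$ and $V=\prod V_n$ are basic open with $U_n,V_n$ nonempty open in $X$ and $U_n=V_n=X$ for $n>N$, for some common $N\in\N$. Let $\sigma$ be the involution of $\N$ that swaps $n\leftrightarrow n+N$ for $1\le n\le N$ and fixes all other integers. Choosing independently $x_n\in U_n$ for $n\le N$, $x_{n+N}\in V_n$ for $n\le N$ (possible because these are nonempty and the two index ranges are disjoint), and arbitrary $x_m\in X$ for $m>2N$, produces $\ve{x}\in U$ with $\varphi_{\sigma}(\ve{x})=(x_{\sigma(n)})\in V$.

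The main (modest) obstacle is simply exhibiting the right permutation: one must arrange that after acting by $\sigma$ the coordinate constraints coming from $U$ land on indices disjoint from those coming from $V$, so that they can be satisfied simultaneously. Once this decoupling is in place, Proposition~\ref{prop:zero-one-Kechris} yields the dichotomy at once.
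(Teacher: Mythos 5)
Your proof is correct and follows essentially the same route as the paper: both invoke Proposition~\ref{prop:zero-one-Kechris} with the group $G$ of coordinate permutations by finitely-supported $\sigma$, and both verify the mixing condition by swapping an initial block of coordinates so that the finitely many constraints from $U$ and from $V$ land on disjoint index sets. One small point in your favour: you flag that $X^{\N}$ must itself be a Baire space to apply Proposition~\ref{prop:zero-one-Kechris} (which does not follow from $X$ Baire alone, though it does hold when $X$ is Polish, as in all uses later in the paper); the paper's own proof glosses over this, and instead only bothers to treat the trivial case $X=\emptyset$ separately, which your argument handles vacuously.
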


\begin{proof}
 Since the proposition is obvious if $X=\emptyset$,
 we may assume that $X\ne\emptyset$ and take an element $a\in X$.

 For each permutation $\sigma$ on $\N$,
 let $\varphi_{\sigma}$ be the homeomorphism on $X^{\N}$
 defined by $\varphi_{\sigma}(\ve{x})=(x_{\sigma(n)})$ for $\ve{x}\in X^{\N}$.
 Write $G$ for the set of all $\varphi_{\sigma}$
 where $\sigma$ is a permutation that fixes all but finitely many positive integers.
 It is obvious that $G$ is a group.
 In the light of Proposition~\ref{prop:zero-one-Kechris},
 it suffices to show that
 for every pair of nonempty open subsets $U$ and $V$ of $X^{\N}$,
 there exists $\varphi\in G$ such that $\varphi(U)\cap V\ne\emptyset$.

 Let $U$ and $V$ be nonempty open subsets of $X^{\N}$.
 Take $\ve{u}\in U$ and $\ve{v}\in V$,
 and choose $m\in\N$
 so that $\ve{x}\in U$ if $x_n=u_n$ for all $n\in[m]$,
 and $\ve{x}\in V$ if $x_n=v_n$ for all $n\in[m]$.
 Define a permutation $\sigma$ on $\N$ by setting
 \[
  \sigma(n)=
  \begin{cases}
   n+m&\text{for $n\in[m]$};\\
   n-m&\text{for $n\in[2m]\setminus[m]$};\\
   n&\text{for $n\in\N\setminus[2m]$}.
  \end{cases}
 \]
 Then $\sigma$ fixes all integers greater than $2m$,
 and so $\varphi_{\sigma}\in G$.
 Moreover, $\varphi_{\sigma}$ satisfies
 $\varphi_{\sigma}(U)\cap V\ne\emptyset$ because
 $(u_1,\dotsc,u_m,v_1,\dotsc,v_m,a,a,\dotsc)\in U$ and
 \[
  \varphi_{\sigma}\bigl((u_1,\dotsc,u_m,v_1,\dotsc,v_m,a,a,\dotsc)\bigr)
  =(v_1,\dotsc,v_m,u_1,\dotsc,u_m,a,a,\dotsc)\in V.
 \]
 This completes the proof.
\end{proof}

\subsection{Definition and basic properties of $\X$}
\begin{convention}
 Because the complexity of the discussion below
 forces us to use many indices,
 we shall often use superscripts as well as subscripts
 to denote indices rather than exponents.
 We do use powers occasionally, but
 the meaning will always be clear from the context.
\end{convention}

Write $\Z_+$ for the set of all nonnegative integers:
$\Z_+=\{0,1,2,\dots\}=\{0\}\cup\N$.

\begin{definition}\label{definition:X}
 \begin{enumerate}
  \item We put
        \begin{align*}
         X&=\{\ve{a}\in(0,\infty)^{\N}\mid a_1<a_2<\dotsb\to\infty\},\\
         Y&=\{\ve{\delta}\in(0,1)^{\N}\mid \delta_1>\delta_2>\dotsb\to0\},\\
         Z&=\{\ve{n}\in\N^{\N}\mid\text{$n_{j+1}\ge n_j+j$ for all $j\in\N$}\}.
        \end{align*}
        These are Polish spaces in the relative topology because
        they are $G_{\delta}$ subsets of the Polish spaces
        $(0,\infty)^{\N}$, $(0,1)^{\N}$, and $\N^{\N}$ respectively.
  \item For $\ve{n}\in Z$ and $j,m\in\N$ with $j\le m$, we define
        a finite subset $A_j^m(\ve{n})$ of $\N$ by
        \[
         A_j^m(\ve{n})=[n_j]\cup\bigcup_{i=j}^{m-1}\{n_i+1,\dotsc,n_i+j-1\}.
        \]
        For $\ve{n}\in Z$ and $k\in\Z_+$, we define $\ve{n}^k\in Z$
        by setting $n_j^k=n_{j+k}$ for $j\in\N$.
  \item Let $\ve{n}\in Z$ and $\ve{\delta}\in Y$.
        For $k\in\Z_+$, we define $\Sr_k(\ve{n},\ve{\delta})$ as the set of
        all $\ve{K}\in\K^{\N}$ such that
        \[
         \bigcup_{n\in A_j^m(\ve{n}^k)\setminus A_j^{m-1}(\ve{n}^k)}K_n\subset
         \bigcup_{n\in A_{j-1}^{m-1}(\ve{n}^k)}B(K_n,\delta_m)
        \]
        whenever $2\le j\le m-1$.
        In addition we define
        $\Sr(\ve{n},\ve{\delta})=\bigcup_{k=0}^{\infty}\Sr_k(\ve{n},\ve{\delta})$.
 \end{enumerate}
\end{definition}

\begin{remark}
 To be precise, the definition of $A_j^m(\ve{n})$ is as follows:
 \[
  A_j^m(\ve{n})=
  \begin{cases}
   [n_j]&\text{if $j=1$ or $j=m$};\\
   [n_j]\cup\bigcup_{i=j}^{m-1}\{n_i+1,\dotsc,n_i+j-1\}&\text{if $2\le j\le m-1$}.
  \end{cases}
 \]
\end{remark}

\begin{remark}
 For the reader's convenience,
 we spell out $A_j^m(\ve{n})$ for small $j$ and $m$,
 writing $A_j^m=A_j^m(\ve{n})$ for simplicity:
 \begin{enumerate}
  \item if $j=1$, then $A_1^m=[n_1]$ for all $m\in\N$;
  \item if $j=2$, then $A_2^2=[n_2]$, $A_2^3=[n_2+1]$,
        $A_2^4=[n_2+1]\cup\{n_3+1\}$, $A_2^5=[n_2+1]\cup\{n_3+1,n_4+1\}$ and so forth;
  \item if $j=3$, then $A_3^3=[n_3]$, $A_3^4=[n_3+2]$,
        $A_3^5=[n_3+2]\cup\{n_4+1,n_4+2\}$,
        $A_3^6=[n_3+2]\cup\{n_4+1,n_4+2,n_5+1,n_5+2\}$ and so forth.
 \end{enumerate}
\end{remark}

\begin{remark}
 Note that $A_j^m(\ve{n})$ depends only on $n_k$ for $k\in[\max\{j,m-1\}]$;
 in particular, $A_j^m(\ve{n})=A_j^m(\ve{n}')$ if $n_k=n_k'$ for all $k\in[m]$.
\end{remark}

\begin{proposition}\label{prop:A,S_basic}
 Let $\ve{n}\in Z$, $\ve{\delta}\in Y$, and $k\in\Z_+$.
 \begin{enumerate}
  \item $[n_j]=A_j^j(\ve{n})\subset A_j^{j+1}(\ve{n})
         \subset A_j^{j+2}(\ve{n})\subset\cdots$
        for every $j\in\N$, and
        $[n_1]=A_1^m(\ve{n})\subset\cdots\subset A_m^m(\ve{n})=[n_m]$
        for every $m\in\N$.
        In particular, $[n_j]\subset A_j^m(\ve{n})\subset[n_m]$
        whenever $j\le m$.
  \item $A_j^{m+1}(\ve{n}^k)\subset A_j^m(\ve{n}^{k+1})$
        for all $j,m\in\N$ with $j\le m$.
  \item $\Sr_k(\ve{n},\ve{\delta})=\Sr_0(\ve{n}^k,\ve{\delta})$.
  \item $\Sr_k(\ve{n},\ve{\delta})\subset\Sr_{k+1}(\ve{n},\ve{\delta})$.
 \end{enumerate}
\end{proposition}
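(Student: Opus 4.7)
The plan is to handle (1), (2), (3) by unwinding the definitions and then to bootstrap (4) from (2) and (3).

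For (1), the chain $[n_j]=A_j^j(\ve{n})\subset A_j^{j+1}(\ve{n})\subset\cdots$ is immediate since as $m$ grows the defining union just gains one block. For the chain $A_1^m(\ve{n})\subset\cdots\subset A_m^m(\ve{n})=[n_m]$, to pass from $A_j^m(\ve{n})$ to $A_{j+1}^m(\ve{n})$ I use that the initial segment grows from $[n_j]$ to $[n_{j+1}]$, that each block for index $i\ge j+1$ grows, and that the $i=j$ block $\{n_j+1,\dots,n_j+j-1\}$ is absorbed into $[n_{j+1}]$ thanks to the defining inequality $n_{j+1}\ge n_j+j$ of $Z$. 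The containment $A_j^m(\ve{n})\subset[n_m]$ then follows because $n_i+j-1<n_i+i\le n_{i+1}\le n_m$ whenever $j\le i\le m-1$.

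For (2), I expand both sides using the definition, using $(\ve{n}^k)_i=n_{i+k}$, and reindex the union in $A_j^m(\ve{n}^{k+1})$ via $i'=i+1$. The two sides then carry identical blocks for the indices $i'=j+1,\dots,m$, and the only remaining contribution to the left-hand side, namely $[n_{j+k}]\cup\{n_{j+k}+1,\dots,n_{j+k}+j-1\}=[n_{j+k}+j-1]$, sits inside $[n_{j+k+1}]$ on the right-hand side via $n_{j+k+1}\ge n_{j+k}+(j+k)\ge n_{j+k}+j$. Part (3) is immediate from the definition of $\Sr_k$, since $(\ve{n}^k)^0=\ve{n}^k$ means both sides impose exactly the same family of conditions.

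For (4), I use (3) to rewrite $\Sr_k(\ve{n},\ve{\delta})=\Sr_0(\ve{n}^k,\ve{\delta})$ and $\Sr_{k+1}(\ve{n},\ve{\delta})=\Sr_0((\ve{n}^k)^1,\ve{\delta})$, reducing the problem to proving $\Sr_0(\ve{n},\ve{\delta})\subset\Sr_0(\ve{n}^1,\ve{\delta})$ for arbitrary $\ve{n}\in Z$. Given $\ve{K}\in\Sr_0(\ve{n},\ve{\delta})$, I verify the $(j,m)$-condition for $\Sr_0(\ve{n}^1,\ve{\delta})$ (with $2\le j\le m-1$) by invoking the hypothesis at indices $(j,m+1)$; applying (2) with $k=0$ yields $A_{j-1}^m(\ve{n})\subset A_{j-1}^{m-1}(\ve{n}^1)$, so together with $\delta_{m+1}<\delta_m$ the right-hand side of the hypothesis upgrades to the required one. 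The main obstacle is the bookkeeping step of verifying the matching of the two left-hand sides
\[
A_j^{m+1}(\ve{n})\setminus A_j^m(\ve{n})\quad\text{and}\quad A_j^m(\ve{n}^1)\setminus A_j^{m-1}(\ve{n}^1);
\]
both unfold to $\{n_m+1,\dots,n_m+j-1\}$ by direct expansion, with the disjointness of successive blocks needed for this computation being once more a direct consequence of $\ve{n}\in Z$.
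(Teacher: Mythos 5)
Your proposal is correct and follows essentially the same route as the paper: parts (1)–(3) by direct expansion using the defining inequality $n_{j+1}\ge n_j+j$, and part (4) by identifying $A_j^{m+1}(\ve{n}^k)\setminus A_j^m(\ve{n}^k)=A_j^m(\ve{n}^{k+1})\setminus A_j^{m-1}(\ve{n}^{k+1})$ and then combining (2) with the monotonicity of $\ve{\delta}$. The only cosmetic difference is your preliminary use of (3) to reduce (4) to the case $k=0$, which the paper handles directly without that reduction.
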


\begin{proof}
 \begin{enumerate}
  \item Immediate from the definition.
  \item
  We have
  \begin{align*}
   A_j^m(\ve{n}^{k+1})
   &=[n_j^{k+1}]\cup\bigcup_{i=j}^{m-1}\{n_i^{k+1}+1,\dotsc,n_i^{k+1}+j-1\}\\
   &=[n_{j+1}^k]\cup\bigcup_{i=j}^{m-1}\{n_{i+1}^k+1,\dotsc,n_{i+1}^k+j-1\}\\
   &\supset[n_j^k+j-1]\cup\bigcup_{i=j+1}^{m}\{n_i^k+1,\dotsc,n_i^k+j-1\}\\
   &=[n_j^k]\cup\bigcup_{i=j}^{m}\{n_i^k+1,\dotsc,n_i^k+j-1\}\\
   &=A_j^{m+1}(\ve{n}^k).
  \end{align*}
  \item
  Immediate from the definition.
  \item
  Suppose that $\ve{K}\in\Sr_k(\ve{n},\ve{\delta})$ and $2\le j\le m-1$.
  Then we have
  \begin{align*}
   A_j^m(\ve{n}^{k+1})\setminus A_j^{m-1}(\ve{n}^{k+1})
   &=\{n_{m-1}^{k+1}+1,\dotsc,n_{m-1}^{k+1}+j-1\}\\
   &=\{n_m^k+1,\dotsc,n_m^k+j-1\}\\
   &=A_j^{m+1}(\ve{n}^k)\setminus A_j^m(\ve{n}^k),
  \end{align*}
  which, together with (2), implies that
  \begin{align*}
   \bigcup_{n\in A_j^m(\ve{n}^{k+1})\setminus A_j^{m-1}(\ve{n}^{k+1})}K_n
   &=\bigcup_{n\in A_j^{m+1}(\ve{n}^k)\setminus A_j^m(\ve{n}^k)}K_n\\
   &\subset\bigcup_{n\in A_{j-1}^m(\ve{n}^k)}B(K_n,\delta_{m+1})\\
   &\subset\bigcup_{n\in A_{j-1}^{m-1}(\ve{n}^{k+1})}B(K_n,\delta_m)
  \end{align*}
  because $\delta_{m+1}<\delta_m$.
  Hence we obtain $\ve{K}\in\Sr_{k+1}(\ve{n},\ve{\delta})$.\qedhere
 \end{enumerate}
\end{proof}

\begin{proposition}\label{prop:K_n_trick}
 Let $\ve{n}\in Z$, $\ve{\delta}\in Y$, and $k\in\Z_+$.
 If $\ve{K}\in\Sr_k(\ve{n},\ve{\delta})$, then
 \[
  \bigcap_{m=j}^{\infty}\bigcup_{n\in A_j^m(\ve{n}^k)}B(K_n,\delta_m)
  \subset\bigcup_{n=1}^{\infty}K_n
 \]
 for all $j\in\N$.
\end{proposition}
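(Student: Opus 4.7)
By Proposition~\ref{prop:A,S_basic}(3) I may assume $k=0$ and prove the inclusion with $\ve{n}$ in place of $\ve{n}^0$. The argument is by contradiction: fix $x$ in the intersection and suppose $x\notin\bigcup_n K_n$. The guiding observation is that whenever $x\in B(K_{p(m)},\varepsilon(m))$ with $\varepsilon(m)\to 0$ and $p(m)$ taking values in a fixed finite subset of $\N$, pigeonhole combined with the closedness of the $K_n$ yields some $p_0$ with $x\in K_{p_0}$, a contradiction.

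The plan is to iteratively invoke the defining inclusion of $\Sr_0(\ve{n},\ve{\delta})$, reducing the first index from $j$ down to $1$ one step at a time; since $A_1^M(\ve{n})=[n_1]$ for every $M$, at the end the guiding observation closes the argument. Concretely, I start with $n_0(m)\in A_j^m(\ve{n})$ satisfying $x\in B(K_{n_0(m)},\delta_m)$ and set $i_0(m)=m$, $\varepsilon_0(m)=\delta_m$. Inductively for $t=0,1,\dots,j-2$, I produce $n_{t+1}(m)\in A_{j-t-1}^{i_{t+1}(m)}(\ve{n})$, an index $i_{t+1}(m)\in\{j-t+1,\dots,i_t(m)-1\}$, and $\varepsilon_{t+1}(m)>0$ with $x\in B(K_{n_{t+1}(m)},\varepsilon_{t+1}(m))$, $i_{t+1}(m)\to\infty$, and $\varepsilon_{t+1}(m)\to 0$. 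The reduction relies on $n_t(m)\to\infty$, which is forced by $\varepsilon_t(m)\to 0$ together with the guiding observation applied to singletons: from the explicit description
\[
A_{j-t}^{i_t(m)}(\ve{n})=[n_{j-t}+(j-t)-1]\cup\bigcup_{i=j-t+1}^{i_t(m)-1}\{n_i+1,\dots,n_i+(j-t)-1\}
\]
one reads off the unique $i_{t+1}(m)$ with $n_t(m)\in A_{j-t}^{i_{t+1}(m)+1}(\ve{n})\setminus A_{j-t}^{i_{t+1}(m)}(\ve{n})$. The $\Sr_0$-condition at indices $j-t$ and $i_{t+1}(m)+1$ (valid since eventually $2\le j-t\le i_{t+1}(m)$) then supplies $K_{n_t(m)}\subset\bigcup_{p\in A_{j-t-1}^{i_{t+1}(m)}(\ve{n})}B(K_p,\delta_{i_{t+1}(m)+1})$; chasing $x$ through this inclusion via the triangle inequality produces $n_{t+1}(m)$ with $\varepsilon_{t+1}(m)=\varepsilon_t(m)+\delta_{i_{t+1}(m)+1}$.

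After $j-1$ reductions $n_{j-1}(m)\in A_1^{i_{j-1}(m)}(\ve{n})=[n_1]$ with $\varepsilon_{j-1}(m)\to 0$, and the guiding observation forces $x\in K_q$ for some $q\in[n_1]$, the desired contradiction; the case $j=1$ requires no iteration. The main obstacle I expect is purely bookkeeping: checking at every step that $i_{t+1}(m)\to\infty$ (which follows from $n_t(m)$ lying within bounded distance of $n_{i_{t+1}(m)}$), that the applicability condition $2\le j-t\le i_{t+1}(m)$ for the $\Sr_0$-inclusion holds eventually, and that $\varepsilon_t(m)$ remains a null sequence across the iteration.
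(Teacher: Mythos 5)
Your proof is correct, but it takes a genuinely different organizational route from the paper's. The paper introduces auxiliary closed sets $L_i=\overline{\bigcup_{n\in A_i}K_n}$ with $A_i=\bigcup_{m\ge i}A_i^m(\ve{n})$, observes $x\in L_j$, takes the \emph{minimal} $i_0$ with $x\in L_{i_0}$, rules out $i_0=1$ directly, and then for $i_0\ge2$ uses the $\Sr_0$-condition once to conclude $x\in L_{i_0-1}$, contradicting minimality. You instead run the reduction explicitly from first index $j$ down to $1$, in $j-1$ steps, carrying along for each $m$ a chain $n_t(m)\in A_{j-t}^{i_t(m)}(\ve{n})$, $x\in B(K_{n_t(m)},\varepsilon_t(m))$, and verifying that $\varepsilon_t(m)\to0$ and $i_t(m)\to\infty$ persist through the recursion. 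Both arguments rest on the same two ingredients — the $\Sr_0$-inclusion to lower the first index of $A_\cdot^\cdot$, and closedness of the $K_n$ combined with pigeonhole to land the final contradiction — but the paper's minimality device compresses the descent into a single application, whereas you pay with the bookkeeping you anticipate (a double loop in $t$ and $m$, eventually-valid ranges, null-sequence propagation). Two small inaccuracies in your write-up are harmless: the range for $i_{t+1}(m)$ should be $\{j-t,\dots,i_t(m)-1\}$ rather than starting at $j-t+1$, and your explicit formula for $A_{j-t}^{i_t(m)}(\ve{n})$ with the block $[n_{j-t}+(j-t)-1]$ presupposes $i_t(m)\ge j-t+1$, which holds only eventually; neither affects the conclusion since all estimates are taken as $m\to\infty$. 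A modest advantage of your version is that it never forms closures of infinite unions, working only with finitely many $K_n$ at each stage.
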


\begin{proof}
 By Proposition~\ref{prop:A,S_basic} (3), we may assume that $k=0$.
 For simplicity we write $A_j^m$ for $A_j^m(\ve{n})$.
 Fix $j\in\N$ and take any
 $x\in\bigcap_{m=j}^{\infty}\bigcup_{n\in A_j^m}B(K_n,\delta_m)$.
 Seeking a contradiction, suppose that $x\notin\bigcup_{n=1}^{\infty}K_n$.

 For each $i\in\N$, set $A_i=\bigcup_{m=i}^{\infty}A_i^m$
 and $L_i=\overline{\bigcup_{n\in A_i}K_n}$.
 Then we have
 \[
  x\in\bigcap_{m=j}^{\infty}\bigcup_{n\in A_j^m}B(K_n,\delta_m)
  \subset\bigcap_{m=j}^{\infty}B(L_j,\delta_m)
  =L_j,
 \]
 which allows us to define $i_0$ as the minimum $i\in\N$ with $x\in L_i$.

 If $i_0=1$, then $A_1=[n_1]$ and $x\in L_1=\bigcup_{n=1}^{n_1}K_n$,
 contradicting our assumption that $x\notin\bigcup_{n=1}^{\infty}K_n$.
 Thus $i_0\ge2$.

 For each $m\in\N$, take $x_m\in\bigcup_{n\in A_{i_0}}K_n$
 with $\lvert x_m-x\rvert<1/m$
 and choose $k_m\in A_{i_0}$ with $x_m\in K_{k_m}$.
 If there exists $k\in\N$ such that $k_m=k$ for infinitely many $m\in\N$,
 then $x=\lim_{m\to\infty}x_m\in K_k$,
 contradicting our assumption;
 therefore such $k$ does not exist.
 Consequently, for each $i\ge i_0$, we may take
 $m_i\in\N$ with $k_{m_i}\notin A_{i_0}^i$,
 and we may assume that $m_{i_0}<m_{i_0+1}<\cdots\to\infty$.
 Then for each $i\ge i_0$ we have
 \begin{align*}
  x_{m_i}
  &\in K_{k_{m_i}}
  \subset\bigcup_{n\in A_{i_0}\setminus A_{i_0}^i}K_n
  =\bigcup_{l=i+1}^{\infty}\bigcup_{n\in A_{i_0}^l\setminus A_{i_0}^{l-1}}K_n\\
  &\subset\bigcup_{l=i+1}^{\infty}\bigcup_{n\in A_{i_0-1}^{l-1}}B(K_n,\delta_l)
  \subset\bigcup_{l=i+1}^{\infty}\bigcup_{n\in A_{i_0-1}^{l-1}}B(K_n,\delta_{i+1})\\
  &\subset\bigcup_{n\in A_{i_0-1}}B(K_n,\delta_{i+1})
  \subset B(L_{i_0-1},\delta_{i+1}),
 \end{align*}
 keeping in mind that
 $\ve{K}\in\Sr_0(\ve{n},\ve{\delta})$ and $\ve{\delta}\in Y$.
 It follows that
 \[
  x\in\bigcap_{i=i_0}^{\infty}B(L_{i_0-1},\delta_{i+1}+1/m_i)
   =L_{i_0-1},
 \]
 which violates the minimality of $i_0$.
 This completes the proof.
\end{proof}

\begin{definition}\label{definition:Y,X}
 For $k\in\Z_+$, we define $\Y_k$ as the set of all
 \[
  (\ve{K},f,\ve{n},\ve{\delta},\ve{a},\ve{b})
  \in\K^{\N}\times C(I)\times Z\times Y\times X\times X
 \]
 such that $\ve{K}\in\Sr_k(\ve{n},\ve{\delta})$ and
 \[
  N(f,a_j)\subset\bigcup_{n\in A_j^m(\ve{n}^k)}B(K_n,\delta_m),\qquad
  \bigcup_{n\in A_j^m(\ve{n})}K_n\subset B\bigl(N(f,b_j),\delta_m\bigr)
 \]
 whenever $j\le m$.
 Set $\Y=\bigcup_{k=0}^{\infty}\Y_k$
 and write $\X$ for the projection of $\Y$ to $\K^{\N}\times C(I)$.
\end{definition}

\begin{remark}
 Note the difference between the subscripts of the two unions above.
\end{remark}

\begin{proposition}\label{prop:Y_increasing}
 We have $\Y_k\subset\Y_{k+1}$ for all $k\in\Z_+$.
\end{proposition}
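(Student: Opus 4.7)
The plan is to show directly that each of the three clauses defining $\Y_k$ continues to hold with $k+1$ in place of $k$. Fix $(\ve{K},f,\ve{n},\ve{\delta},\ve{a},\ve{b})\in\Y_k$. The membership $\ve{K}\in\Sr_k(\ve{n},\ve{\delta})$ implies $\ve{K}\in\Sr_{k+1}(\ve{n},\ve{\delta})$ by Proposition~\ref{prop:A,S_basic}(4). Moreover, the clause
\[
\bigcup_{n\in A_j^m(\ve{n})}K_n\subset B\bigl(N(f,b_j),\delta_m\bigr)
\]
involves $A_j^m(\ve{n})$, not $A_j^m(\ve{n}^k)$, so it does not depend on $k$ and therefore transfers verbatim to $\Y_{k+1}$.

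The only real work is the first inclusion: I need to verify that $N(f,a_j)\subset\bigcup_{n\in A_j^m(\ve{n}^{k+1})}B(K_n,\delta_m)$ for every $j\le m$. Fix such $j,m$. Applying the hypothesis $(\ve{K},f,\ve{n},\ve{\delta},\ve{a},\ve{b})\in\Y_k$ at the pair $(j,m+1)$ yields
\[
N(f,a_j)\subset\bigcup_{n\in A_j^{m+1}(\ve{n}^k)}B(K_n,\delta_{m+1}).
\]
By Proposition~\ref{prop:A,S_basic}(2), $A_j^{m+1}(\ve{n}^k)\subset A_j^m(\ve{n}^{k+1})$, and since $\ve{\delta}\in Y$ we have $\delta_{m+1}<\delta_m$. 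Combining these two facts gives
\[
\bigcup_{n\in A_j^{m+1}(\ve{n}^k)}B(K_n,\delta_{m+1})
\subset\bigcup_{n\in A_j^m(\ve{n}^{k+1})}B(K_n,\delta_m),
\]
which is exactly the required inclusion.

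There is no genuine obstacle here; the proposition is essentially a bookkeeping corollary of parts (2) and (4) of Proposition~\ref{prop:A,S_basic}, together with the observation that the $\ve{b}$-clause in the definition of $\Y_k$ is manifestly independent of $k$. The slight asymmetry in Definition~\ref{definition:Y,X} (the $\ve{a}$-clause uses $\ve{n}^k$ while the $\ve{b}$-clause uses $\ve{n}$) is precisely what makes the desired monotonicity $\Y_k\subset\Y_{k+1}$ work out cleanly: both sides of the $\ve{b}$-inclusion are independent of $k$, and on the $\ve{a}$-side the shift from $\ve{n}^k$ to $\ve{n}^{k+1}$ is absorbed by taking the hypothesis at $m+1$ and invoking Proposition~\ref{prop:A,S_basic}(2).
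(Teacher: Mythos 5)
Your proof is correct and follows essentially the same route as the paper: invoke Proposition~\ref{prop:A,S_basic}(4) for the $\Sr_k$ clause, observe that the $\ve{b}$-clause is independent of $k$, and for the $\ve{a}$-clause apply the hypothesis at $(j,m+1)$ and shrink via Proposition~\ref{prop:A,S_basic}(2) together with $\delta_{m+1}<\delta_m$. If anything you spell out the "apply at $(j,m+1)$" step more explicitly than the paper's terser phrasing, but the argument is the same.
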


\begin{proof}
 Thanks to Proposition~\ref{prop:A,S_basic} (4),
 it suffices to prove that
 \[
  \bigcup_{n\in A_j^{m+1}(\ve{n}^k)}B(K_n,\delta_{m+1})
  \subset\bigcup_{n\in A_j^m(\ve{n}^{k+1})}B(K_n,\delta_m)
 \]
 whenever $\ve{K}\in\K^{\N}$, $\ve{n}\in Z$, $\ve{\delta}\in Y$, and $j\le m$.
 Proposition~\ref{prop:A,S_basic} (2) shows that
 \begin{align*}
  \bigcup_{n\in A_j^m(\ve{n}^{k+1})}B(K_n,\delta_m)
  &\supset\bigcup_{n\in A_j^{m+1}(\ve{n}^k)}B(K_n,\delta_m)\\
  &\supset\bigcup_{n\in A_j^{m+1}(\ve{n}^k)}B(K_n,\delta_{m+1})
 \end{align*}
 because $\delta_m>\delta_{m+1}$.
\end{proof}

\begin{proposition}\label{prop:X_cup=N}
 If $(\ve{K},f)\in\X$, then $\bigcup_{n=1}^{\infty}K_n=N(f)$.
\end{proposition}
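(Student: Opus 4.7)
The plan is to unfold the definition of $\X$: given $(\ve{K},f)\in\X$, pick $k\in\Z_+$ together with $\ve{n}\in Z$, $\ve{\delta}\in Y$, and $\ve{a},\ve{b}\in X$ witnessing $(\ve{K},f,\ve{n},\ve{\delta},\ve{a},\ve{b})\in\Y_k$. Since $a_j,b_j\to\infty$ we may decompose $N(f)=\bigcup_j N(f,a_j)=\bigcup_j N(f,b_j)$, and the two inclusions $\bigcup_n K_n\subset N(f)$ and $N(f)\subset\bigcup_n K_n$ will be established from the two clauses of Definition~\ref{definition:Y,X} separately.

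For $\bigcup_n K_n\subset N(f)$, fix $n\in\N$ and choose $j$ large enough that $n\le n_j$. Proposition~\ref{prop:A,S_basic}(1) then gives $n\in[n_j]\subset A_j^m(\ve{n})$ for every $m\ge j$, so the second clause of Definition~\ref{definition:Y,X} yields $K_n\subset B\bigl(N(f,b_j),\delta_m\bigr)$ for all such $m$. Since $\delta_m\to 0$ and $N(f,b_j)$ is closed by Proposition~\ref{prop:N(f)_Fsigma}, letting $m\to\infty$ gives $K_n\subset N(f,b_j)\subset N(f)$.

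For the reverse inclusion, the first clause of Definition~\ref{definition:Y,X} tells us that for every $j\in\N$,
\[
 N(f,a_j)\subset\bigcap_{m=j}^{\infty}\bigcup_{n\in A_j^m(\ve{n}^k)}B(K_n,\delta_m),
\]
and since $\ve{K}\in\Sr_k(\ve{n},\ve{\delta})$, Proposition~\ref{prop:K_n_trick} shows the right-hand side is contained in $\bigcup_{n=1}^{\infty}K_n$. Taking the union over $j$ completes the argument. No step presents any real obstacle: the two clauses of Definition~\ref{definition:Y,X} are clearly engineered so that Proposition~\ref{prop:K_n_trick} delivers the $\supset$ direction at once, while the monotonicity of $A_j^m(\ve{n})$ in $m$ combined with $\delta_m\to 0$ handles the $\subset$ direction trivially.
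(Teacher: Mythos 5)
Your proof is correct and follows essentially the same route as the paper's: unfold the definition of $\X$, use Proposition~\ref{prop:K_n_trick} for $N(f)\subset\bigcup_n K_n$, and use the second clause of Definition~\ref{definition:Y,X} together with $\delta_m\to0$ and the closedness of $N(f,b_j)$ for the reverse inclusion. The only cosmetic difference is that the paper intersects the unions $\bigcup_{n\in A_j^m(\ve{n})}K_n$ over $m$ before passing to $N(f,b_j)$, whereas you fix a single index $n$ and pass to the limit directly; the two formulations are equivalent.
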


\begin{proof}
 Take $\ve{n}\in Z$, $\ve{\delta}\in Y$, $\ve{a},\ve{b}\in X$,
 and $k\in\Z_+$ so that
 $(\ve{K},f,\ve{n},\ve{\delta},\ve{a},\ve{b})\in\Y_k$.

 Firstly, we prove that $\bigcup_{n=1}^{\infty}K_n\subset N(f)$.
 Since
 \[
  \bigcup_{n=1}^{n_j}K_n
  =\bigcap_{m=j}^{\infty}\bigcup_{n\in A_j^m(\ve{n})}K_n
  \subset\bigcap_{m=j}^{\infty}B\bigl(N(f,b_j),\delta_m\bigr)
  =N(f,b_j)
 \]
 for every $j\in\N$, we have
 \[
  \bigcup_{n=1}^{\infty}K_n
  =\bigcup_{j=1}^{\infty}\bigcup_{n=1}^{n_j}K_n
  \subset\bigcup_{j=1}^{\infty}N(f,b_j)
  =N(f).
 \]

 Secondly, we prove that $N(f)\subset\bigcup_{n=1}^{\infty}K_n$.
 For every $j\in\N$, the definition of $\Y_k$ and Proposition~\ref{prop:K_n_trick}
 show that
 \[
  N(f,a_j)
  \subset\bigcap_{m=j}^{\infty}\bigcup_{n\in A_j^m(\ve{n}^k)}B(K_n,\delta_m)
  \subset\bigcup_{n=1}^{\infty}K_n.
 \]
 It follows that
 \[
  N(f)=\bigcup_{j=1}^{\infty}N(f,a_j)\subset\bigcup_{n=1}^{\infty}K_n.\qedhere
 \]
\end{proof}

\begin{lemma}\label{lem:perm_A}
 Let $\ve{n}\in Z$, and suppose that
 a permutation $\sigma$ on $\N$ and $k\in\N$ satisfy $\sigma(n)=n$ for all $n>n_k$.
 Then we have the following:
 \begin{enumerate}
  \item $A_j^m(\ve{n}^k)$ is invariant under $\sigma$ whenever $j\le m$;
  \item $\sigma\bigl(A_j^m(\ve{n})\bigr)\subset A_{\max\{j,k\}}^{\max\{m,k\}}(\ve{n})$
        whenever $j\le m$.
 \end{enumerate}
\end{lemma}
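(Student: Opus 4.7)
The plan is to leverage two basic facts about $\sigma$: it fixes every integer $>n_k$, and, being a bijection, it permutes $[n_k]$ within itself. Any subset $A\subset\N$ therefore splits as $(A\cap[n_k])\cup(A\setminus[n_k])$, with $\sigma$ sending the first piece into $[n_k]$ and fixing the second pointwise; both parts reduce to showing that the relevant target set contains each of these pieces.

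For part (1), unfolding the definitions gives
\[
 A_j^m(\ve{n}^k)=[n_{j+k}]\cup\bigcup_{i=j}^{m-1}\{n_{i+k}+1,\dotsc,n_{i+k}+j-1\},
\]
and the growth condition from $Z$ ensures $n_{j+k}\ge n_{k+1}\ge n_k+k>n_k$. Hence $[n_k]$ sits inside $A_j^m(\ve{n}^k)$, while every other element is $>n_k$ and thus fixed by $\sigma$. Invariance follows.

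For part (2), I would split into cases according as $j$ and $m$ are each at least $k$. In every case, $\sigma$ sends $A_j^m(\ve{n})\cap[n_k]$ into $[n_k]\subset A_{\max\{j,k\}}^{\max\{m,k\}}(\ve{n})$, so the content is to show that the pointwise-fixed remainder $A_j^m(\ve{n})\setminus[n_k]$ already lies in the target. The one non-trivial arithmetic step is that a chunk $\{n_i+1,\dotsc,n_i+j-1\}$ with $j\le i<k$ satisfies $n_i+j-1\le n_i+i-1<n_{i+1}\le n_k$ (using $\ve{n}\in Z$), so it contains no point outside $[n_k]$. Chunks with $i\ge k$ are sub-chunks of $\{n_i+1,\dotsc,n_i+\max\{j,k\}-1\}$ appearing in the target; and the leftover part of $[n_j]$ is either empty (when $j\le k$) or equals $\{n_k+1,\dotsc,n_j\}\subset[n_j]$, which is again in the target.

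I do not expect a serious obstacle: the lemma is essentially bookkeeping about which indices lie on which side of $n_k$, and the growth condition $n_{j+1}\ge n_j+j$ of $Z$ is used precisely once. The mildly delicate point is recalling that $[n_k]$ is \emph{invariant} (not merely that $\sigma([n_k])$ lies in some larger set), which is what justifies $\sigma([n_j])\subset[n_k]\subset A_k^m(\ve{n})$ in the subcase $j<k$ of part (2).
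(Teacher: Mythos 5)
Your proof is correct, and both proofs rest on the same key observation — that any subset of $\N$ containing $[n_k]$ is $\sigma$-invariant — but the route you take for part~(2) differs from the paper's. You split $A_j^m(\ve{n})$ according to whether an element is $\le n_k$ or $>n_k$ and then verify chunk by chunk (using $n_{i+1}\ge n_i+i$ directly) that each piece of the $\sigma$-image lies in $A_{\max\{j,k\}}^{\max\{m,k\}}(\ve{n})$. The paper instead splits into the three cases $k\le j$, $j<k\le m$, $m<k$, and in each case produces a single superset of $A_j^m(\ve{n})$ that both contains $[n_k]$ (hence is $\sigma$-invariant) and equals the target, relying on the monotonicity $[n_j]\subset A_j^m(\ve{n})\subset A_k^m(\ve{n})\subset[n_m]$ already recorded in Proposition~\ref{prop:A,S_basic}(1). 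The paper's argument is therefore shorter: it outsources the chunk bookkeeping to that earlier proposition (which is where the growth condition of $Z$ is actually consumed), whereas you rederive the needed containments from the raw definition of $A_j^m$. Yours is more self-contained but longer; the paper's is slicker by reusing the earlier structural lemma. Either way the proof is sound; the one point worth making explicit in your write-up (and which you flagged at the end) is that $\sigma$ maps $[n_k]$ \emph{onto} itself, which is what lets you conclude $\sigma(A\cap[n_k])\subset[n_k]$ for any $A$.
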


\begin{proof}
 Note that every subset of $\N$ that contains $[n_k]$ is invariant under $\sigma$.

 \begin{enumerate}
  \item The assertion follows from the observation that
        \[
         A_j^m(\ve{n}^k)\supset[n_j^k]=[n_{j+k}]\supset[n_k].
        \]
  \item If $k\le j$, then $A_j^m(\ve{n})\supset[n_j]\supset[n_k]$ and so
        $\sigma\bigl(A_j^m(\ve{n})\bigr)=A_j^m(\ve{n})$.
        If $j<k\le m$, then
        $\sigma\bigl(A_j^m(\ve{n})\bigr)
         \subset\sigma\bigl(A_k^m(\ve{n})\bigr)
         =A_k^m(\ve{n})$.
        If $m<k$, then
        $\sigma\bigl(A_j^m(\ve{n})\bigr)
         \subset\sigma([n_m])
         \subset\sigma([n_k])=[n_k]=A_k^k(\ve{n})$.\qedhere
 \end{enumerate}
\end{proof}

\begin{proposition}\label{prop:K's_invariant}
 If $f\in C(I)$, then $\{\ve{K}\in\K^{\N}\mid(\ve{K},f)\in\X\}$
 is invariant under finite permutations.
\end{proposition}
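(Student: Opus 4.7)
The plan is, given $(\ve{K},f)\in\X$ and a permutation $\sigma$ of $\N$ fixing all but finitely many integers, to exhibit an explicit witness placing $(\ve{K}_\sigma,f)$ in $\X$, where $(\ve{K}_\sigma)_n:=K_{\sigma(n)}$. Starting from a witness $(\ve{K},f,\ve{n},\ve{\delta},\ve{a},\ve{b})\in\Y_k$, I would keep $\ve{n}$, $\ve{\delta}$, $\ve{a}$ intact and modify only $\ve{b}$ and $k$.

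First, since $\sigma$ has finite support and $n_j\to\infty$, I can choose $k^*\ge\max\{k,1\}$ with $\sigma(n)=n$ for all $n>n_{k^*}$, and Proposition~\ref{prop:Y_increasing} then ensures that the original witness already lies in $\Y_{k^*}$. The two conditions in the definition of $\Y_{k^*}$ that involve the \emph{shifted} index set $A_j^m(\ve{n}^{k^*})$---namely $\ve{K}\in\Sr_{k^*}(\ve{n},\ve{\delta})$ and the covering of each $N(f,a_j)$---transfer verbatim from $\ve{K}$ to $\ve{K}_\sigma$ via Lemma~\ref{lem:perm_A}(1): that lemma yields $\sigma$-invariance of $A_j^m(\ve{n}^{k^*})$, so the unions $\bigcup_{n\in A_j^m(\ve{n}^{k^*})}(\cdot)_n$ are unaffected by reindexing the terms by $\sigma$.

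The main obstacle is the remaining condition $\bigcup_{n\in A_j^m(\ve{n})}K_{\sigma(n)}\subset B(N(f,b^*_j),\delta_m)$, because the \emph{unshifted} set $A_j^m(\ve{n})$ need not be $\sigma$-invariant. To handle it, I would invoke Lemma~\ref{lem:perm_A}(2) to obtain $\sigma(A_j^m(\ve{n}))\subset A_{j'}^{m'}(\ve{n})$ with $j'=\max\{j,k^*\}$ and $m'=\max\{m,k^*\}$, and then apply the original covering at these enlarged indices to bound the left-hand side by $B(N(f,b_{j'}),\delta_{m'})$. To compensate for this loss of precision, I would define $\ve{b}^*\in X$ by $b^*_j:=b_{j+k^*}$; the inequality $j+k^*\ge j'$ combined with the monotonicity of $\ve{b}$ yields $b^*_j\ge b_{j'}$, and together with $\delta_{m'}\le\delta_m$ and the obvious monotonicity of $b\mapsto N(f,b)$, this gives the desired inclusion. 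Collecting everything shows $(\ve{K}_\sigma,f,\ve{n},\ve{\delta},\ve{a},\ve{b}^*)\in\Y_{k^*}$, hence $(\ve{K}_\sigma,f)\in\X$.
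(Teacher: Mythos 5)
Your proposal is correct and follows essentially the same route as the paper's own proof: enlarge $k$ via Proposition~\ref{prop:Y_increasing} so that $\sigma$ fixes everything beyond $n_k$, use Lemma~\ref{lem:perm_A}(1) to transfer the $\Sr_k$ membership and the $N(f,a_j)$ coverings by $\sigma$-invariance, and absorb the index inflation from Lemma~\ref{lem:perm_A}(2) by shifting $\ve{b}$ to $b'_j=b_{j+k}$.
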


\begin{proof}
 Suppose that $\ve{K}$ belongs to the set and
 that $\sigma$ is a permutation on $\N$ that fixes all but finitely many
 positive integers.
 Define $\ve{K}'\in\K^{\N}$ by setting $K_n'=K_{\sigma(n)}$ for $n\in\N$.
 We need to prove that $(\ve{K}',f)\in\X$.

 Take $\ve{n}\in Z$, $\ve{\delta}\in Y$, $\ve{a},\ve{b}\in X$,
 and $k\in\Z_+$ so that $(\ve{K},f,\ve{n},\ve{\delta},\ve{a},\ve{b})\in\Y_k$.
 By Proposition~\ref{prop:Y_increasing},
 we may assume that $k$ is so large that $\sigma(n)=n$ for all $n>n_k$.

 By Lemma~\ref{lem:perm_A} (1),
 it is easy to see that $\ve{K}'\in\Sr_k(\ve{n},\ve{\delta})$ and
 that $N(f,a_j)\subset\bigcup_{n\in A_j^m(\ve{n}^k)}B(K_n',\delta_m)$ whenever $j\le m$.

 Now define $\ve{b}'\in X$ by setting $b_j'=b_{j+k}$ for $j\in\N$.
 Then for $j,m\in\N$ with $j\le m$,
 Lemma~\ref{lem:perm_A} (2) shows that
 \begin{align*}
  \bigcup_{n\in A_j^m(\ve{n})}K_n'
  &\subset\bigcup_{n\in A_{\max\{j,k\}}^{\max\{m,k\}}(\ve{n})}K_n
   \subset B\bigl(N(f,b_{\max\{j,k\}}),\delta_{\max\{m,k\}}\bigr)\\
  &\subset B\bigl(N(f,b_j'),\delta_m\bigr)
 \end{align*}
 because $b_{\max\{j,k\}}\le b_{j+k}=b_j'$ and
 $\delta_{\max\{m,k\}}\le\delta_m$.

 Hence we have shown that $(\ve{K}',f,\ve{n},\ve{\delta},\ve{a},\ve{b}')\in\Y_k$,
 from which it follows that $(\ve{K}',f)\in\X$.
\end{proof}

\begin{proposition}\label{prop:X_analytic}
 The set $\X$ is an analytic subset of $\K^{\N}\times C(I)$.
\end{proposition}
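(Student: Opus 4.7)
The plan is to exhibit $\X$ as the continuous projection of a Borel subset of a Polish space. By Definition~\ref{definition:Y,X}, $\X = \pi(\Y) = \pi\bigl(\bigcup_{k=0}^{\infty}\Y_k\bigr)$ where $\pi \colon \K^{\N}\times C(I)\times Z\times Y\times X\times X \to \K^{\N}\times C(I)$ is continuous and each factor of the domain is Polish. Since continuous images of Borel subsets of Polish spaces are analytic, it suffices to show that each $\Y_k$ is Borel; I expect in fact that each $\Y_k$ is $G_{\delta}$.

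Three ingredients drive the argument. First, for fixed $j,m,k$ the set $A_j^m(\ve{n})$ depends only on $n_1,\dotsc,n_{\max\{j,m-1\}}$, so $Z$ partitions into clopen cylinders on which $A_j^m(\ve{n})$ and $A_j^m(\ve{n}^k)$ are constant; each condition in Definition~\ref{definition:Y,X} can therefore be analysed cylinder by cylinder. Second, the corollary to Lemma~3.1 extends routinely (by taking finite unions in $\K$) to show that $\bigl\{(\ve{K},\delta) : \bigcup_{n\in\alpha}K_n \subset \bigcup_{n\in\beta}B(K_n,\delta)\bigr\}$ is open for any fixed finite $\alpha,\beta\subset\N$. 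Third, the relation $\bigl\{(x,f,a) : x\in N(f,a)\bigr\}$ is closed in $I\times C(I)\times(0,\infty)$ (checked essentially as in Proposition~\ref{prop:N(f)_Fsigma}, with extra care for the moving endpoint $1-2^{-a}$); combined with the compactness of $I$ this makes the set-valued map $(f,a)\mapsto N(f,a)$ upper semicontinuous, so $\{(f,a) : N(f,a)\subset U\}$ is open for every open $U\subset I$.

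With these tools in hand, each of the three families of conditions defining $\Y_k$ becomes, on each cylinder of $Z$, a countable intersection of open conditions indexed by $(j,m)$ with $j\le m$. The condition $\ve{K}\in\Sr_k(\ve{n},\ve{\delta})$ falls directly into the form of the second ingredient. For $N(f,a_j)\subset\bigcup_{n\in A_j^m(\ve{n}^k)}B(K_n,\delta_m)$, a compactness argument lets us shrink $\delta_m$ slightly and then combine the upper semicontinuity of $N$ with the openness from the second ingredient to obtain a neighbourhood in all variables. The inclusion $\bigcup_{n\in A_j^m(\ve{n})}K_n \subset B\bigl(N(f,b_j),\delta_m\bigr)$ is equivalent to $\max_{x\in\bigcup_{n\in A_j^m(\ve{n})}K_n} d\bigl(x,N(f,b_j)\bigr)<\delta_m$; the upper semicontinuity of $N$ makes $(x,f,a)\mapsto d\bigl(x,N(f,a)\bigr)$ lower semicontinuous, and the maximum over a Hausdorff-varying compact set preserves lower semicontinuity in the joint variables. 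Assembling gives $\Y_k$ as a $G_{\delta}$, whence $\Y$ is Borel and $\X=\pi(\Y)$ is analytic.

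The step I expect to require the most care is the joint lower semicontinuity of $\max_{x\in\bigcup_{n\in\alpha}K_n}d\bigl(x,N(f,b_j)\bigr)$ in $(\ve{K},f,b_j)$: one fixes a maximiser $x^{*}$ in the limit compact set, pulls it back along Hausdorff convergence to points $x_n^{*}$ in the approximating sets, and then applies the lower semicontinuity of $d(\cdot,N(\cdot,\cdot))$ to the triple $(x_n^{*},f_n,b_{j,n})$. Once this is in place, the remainder of the proof is essentially bookkeeping.
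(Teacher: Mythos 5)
Your plan is reasonable in outline (exhibit each $\Y_k$ as a Borel set and project), and most of the ingredients you assemble are correct; but there is a genuine gap, and it sits precisely at the step you flag as the delicate one. You correctly show that $(x,f,a)\mapsto d\bigl(x,N(f,a)\bigr)$ is lower semicontinuous (because the set-valued map $(f,a)\mapsto N(f,a)$ is upper semicontinuous and has compact values), and that passing to $\max_{x\in\bigcup_{n\in\alpha}K_n}d\bigl(x,N(f,b_j)\bigr)$ preserves lower semicontinuity. But lower semicontinuity of $g$ makes $\{g>c\}$ open, not $\{g<c\}$. Your third family of conditions is $\max_x d\bigl(x,N(f,b_j)\bigr)<\delta_m$, which is of the latter type, so it is \emph{not} an open condition, and the claim that $\Y_k$ is $G_\delta$ does not follow. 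The underlying reason is that $(f,b)\mapsto N(f,b)$ is upper but \emph{not} lower semicontinuous (for instance, perturbing $f(x)=bx$ to $(b+\varepsilon)x$ makes $N^+(\cdot,b)$ collapse from an interval to the empty set), so the open set $B\bigl(N(f,b_j),\delta_m\bigr)$ can shrink abruptly, expelling a fixed compact set $\bigcup_{n\in\alpha}K_n$.

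The gap is repairable but should be repaired explicitly. Since $g(\ve K,f,\ve b,\ve\delta)=\max_x d\bigl(x,N(f,b_j)\bigr)-\delta_m$ is lower semicontinuous, each sublevel set $\{g\le -1/p\}$ is closed and $\{g<0\}=\bigcup_{p\ge1}\{g\le -1/p\}$ is $F_\sigma$. On each clopen cylinder of $Z$ you therefore get $\Y_k$ as a countable intersection of sets that are open or $F_\sigma$, hence Borel (it sits at the level $F_{\sigma\delta}$, not $G_\delta$); a disjoint countable union over cylinders is still Borel, so $\Y_k$ is Borel, $\Y$ is Borel, and $\X=\pi(\Y)$ is analytic. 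So your final conclusion survives, but not via $G_\delta$.

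For comparison, the paper avoids the Borel bookkeeping entirely. It proves $\pr\Y_k=\pr\bar{\Y}_k$ for each $k$, so that $\X=\bigcup_k\pr\Y_k=\bigcup_k\pr\bar{\Y}_k$ is a countable union of projections of closed sets and hence analytic. The identity is established by a perturbation argument: given $(\ve K,f,\ve n,\ve\delta,\ve a,\ve b)\in\bar{\Y}_k$, choose $\ve\delta'>\ve\delta$, $\ve a'<\ve a$, $\ve b'>\ve b$ termwise and verify that $(\ve K,f,\ve n,\ve\delta',\ve a',\ve b')\in\Y_k$. This exploits the monotone structure of the defining inclusions — they relax under enlarging $\delta$, decreasing $a$, and increasing $b$ — and never has to determine where $\Y_k$ sits in the Borel hierarchy. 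It also uses Proposition~\ref{prop:continuity} in exactly the form given, with $a<b$, rather than semicontinuity of $(f,a)\mapsto N(f,a)$ at a single $a$. The two approaches both work, but the paper's is shorter and robust to exactly the one-sided behaviour of $N$ that trips up the $G_\delta$ claim.
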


\begin{remark}
 For the following proof, tilde $\tilde{\ }$ does not have its usual meaning and
 is not related to hat $\hat{\ }$ or check $\check{\ }$ in the usual way.
\end{remark}

\begin{proof}[Proof of Proposition~\ref{prop:X_analytic}]
 Let
 $\pr\colon\K^{\N}\times C(I)\times Z\times Y\times X\times X
     \longrightarrow\K^{\N}\times C(I)$
 be the projection.
 It suffices to prove that $\pr\Y_k=\pr\bar{\Y}_k$ for every $k\in\Z_+$,
 because it will imply that
 \[
  \X=\pr\Y=\pr\Biggl(\bigcup_{k=0}^{\infty}\Y_k\Biggr)
  =\bigcup_{k=0}^{\infty}\pr\Y_k
  =\bigcup_{k=0}^{\infty}\pr\bar{\Y}_k,
 \]
 from which it follows that $\X$ is analytic.

 Let $k\in\Z_+$.
 We only need to prove that $\pr\bar{\Y}_k\subset\pr\Y_k$,
 so let $(\ve{K},f)\in\pr\bar{\Y}_k$ be given.
 Take $\ve{n}\in Z$, $\ve{\delta}\in Y$, $\ve{a},\ve{b}\in X$
 with $(\ve{K},f,\ve{n},\ve{\delta},\ve{a},\ve{b})\in\bar{\Y}_k$.
 Choosing $\ve{\delta}'\in Y$, $\ve{a}',\ve{b}'\in X$ so that
 $\delta_j'>\delta_j$, $a_j'<a_j$, $b_j'>b_j$ for all $j\in\N$,
 we shall show that $(\ve{K},f,\ve{n},\ve{\delta}',\ve{a}',\ve{b}')\in\Y_k$;
 it will imply that $(\ve{K},f)\in\pr\Y_k$, completing the proof.

 Firstly, we show that $\ve{K}\in\Sr_k(\ve{n},\ve{\delta}')$.
 Fix any $j_0,m_0\in\N$ with $2\le j_0\le m_0-1$.
 Take $\varepsilon>0$ with $\varepsilon<\delta_{m_0}'-\delta_{m_0}$.
 Since $(\ve{K},f,\ve{n},\ve{\delta},\ve{a},\ve{b})\in\bar{\Y}_k$,
 we may find
 $(\tilde{\ve{K}},\tilde{f},\tilde{\ve{n}},\tilde{\ve{\delta}},
   \tilde{\ve{a}},\tilde{\ve{b}})\in\Y_k$
 such that
 \begin{itemize}
  \item $d(\tilde{K}_n,K_n)<\varepsilon/2$ for $n\in[n_{m_0+k}]$;
  \item $\tilde{n}_j=n_j$ for $j\in[m_0+k]$;
  \item $\varepsilon<\delta_{m_0}'-\tilde{\delta}_{m_0}$.
 \end{itemize}
 We write $A_j^m=A_j^m(\ve{n}^k)$ and $\tilde{A}_j^m=A_j^m(\tilde{\ve{n}}^k)$
 for simplicity.
 Observe that
 \[
  \tilde{A}_{j_0}^{m_0}\setminus\tilde{A}_{j_0}^{m_0-1}
  =A_{j_0}^{m_0}\setminus A_{j_0}^{m_0-1},\qquad
  \tilde{A}_{j_0-1}^{m_0-1}=A_{j_0-1}^{m_0-1},
 \]
 and that if $n$ belongs to either of these sets,
 then $d(\tilde{K}_n,K_n)<\varepsilon/2$.
 Accordingly, we have
 \begin{align*}
  \bigcup_{n\in A_{j_0}^{m_0}\setminus A_{j_0}^{m_0-1}}K_n
  &\subset\bigcup_{n\in A_{j_0}^{m_0}\setminus A_{j_0}^{m_0-1}}
   B(\tilde{K}_n,\varepsilon/2)
   =\bigcup_{n\in\tilde{A}_{j_0}^{m_0}\setminus\tilde{A}_{j_0}^{m_0-1}}
   B(\tilde{K}_n,\varepsilon/2)\\
  &\subset\bigcup_{n\in\tilde{A}_{j_0-1}^{m_0-1}}
   B(\tilde{K}_n,\tilde{\delta}_{m_0}+\varepsilon/2)
   =\bigcup_{n\in A_{j_0-1}^{m_0-1}}
   B(\tilde{K}_n,\tilde{\delta}_{m_0}+\varepsilon/2)\\
  &\subset\bigcup_{n\in A_{j_0-1}^{m_0-1}}
   B(K_n,\tilde{\delta}_{m_0}+\varepsilon)
   \subset\bigcup_{n\in A_{j_0-1}^{m_0-1}}
   B(K_n,\delta_{m_0}').
 \end{align*}
 Hence we obtain $\ve{K}\in\Sr_k(\ve{n},\ve{\delta}')$.

 Now what remains to be shown is that if $j_0\le m_0$, then
 \[
  N(f,a_{j_0}')\subset\bigcup_{n\in A_{j_0}^{m_0}(\ve{n}^k)}B(K_n,\delta_{m_0}'),\qquad
  \bigcup_{n\in A_{j_0}^{m_0}(\ve{n})}K_n
  \subset B\bigl(N(f,b_{j_0}'),\delta_{m_0}'\bigr).
 \]
 Fix such $j_0$ and $m_0$, and take $\varepsilon>0$ with
 $\varepsilon<\delta_{m_0}'-\delta_{m_0}$.
 Since $(\ve{K},f,\ve{n},\ve{\delta},\ve{a},\ve{b})\in\bar{\Y}_k$,
 we may find
 $(\tilde{\ve{K}},\tilde{f},\tilde{\ve{n}},\tilde{\ve{\delta}},
   \tilde{\ve{a}},\tilde{\ve{b}})\in\Y_k$
 such that
 \begin{itemize}
  \item $d(\tilde{K}_n,K_n)<\varepsilon/2$ for $n\in[n_{m_0+k}]$;
  \item $\tilde{n}_j=n_j$ for $j\in[m_0+k]$;
  \item $\varepsilon<\delta_{m_0}'-\tilde{\delta}_{m_0}$;
  \item $a_{j_0}'<\tilde{a}_{j_0}$ and $b_{j_0}'>\tilde{b}_{j_0}$;
  \item $N(f,a_{j_0}')\subset B\bigl(N(\tilde{f},\tilde{a}_{j_0}),\varepsilon/2\bigr)$
        and
        $N(\tilde{f},\tilde{b}_{j_0})\subset B\bigl(N(f,b_{j_0}'),\varepsilon/2\bigr)$,
        which can be established because of Proposition~\ref{prop:continuity}.
 \end{itemize}
 Observe that
 \[
  A_{j_0}^{m_0}(\tilde{\ve{n}}^k)=A_{j_0}^{m_0}(\ve{n}^k),\qquad
  A_{j_0}^{m_0}(\tilde{\ve{n}})=A_{j_0}^{m_0}(\ve{n}),
 \]
 and that if $n$ belongs to either of these sets,
 then $d(\tilde{K}_n,K_n)<\varepsilon/2$.
 Accordingly, we have
 \begin{align*}
  N(f,a_{j_0}')
  &\subset B\bigl(N(\tilde{f},\tilde{a}_{j_0}),\varepsilon/2\bigr)
   \subset\bigcup_{n\in A_{j_0}^{m_0}(\tilde{\ve{n}}^k)}
   B(\tilde{K}_n,\tilde{\delta}_{m_0}+\varepsilon/2)\\
  &=\bigcup_{n\in A_{j_0}^{m_0}(\ve{n}^k)}
   B(\tilde{K}_n,\tilde{\delta}_{m_0}+\varepsilon/2)
   \subset\bigcup_{n\in A_{j_0}^{m_0}(\ve{n}^k)}
   B(K_n,\tilde{\delta}_{m_0}+\varepsilon)\\
  &\subset\bigcup_{n\in A_{j_0}^{m_0}(\ve{n}^k)}B(K_n,\delta_{m_0}')
 \end{align*}
 and
 \begin{align*}
  \bigcup_{n\in A_{j_0}^{m_0}(\ve{n})}K_n
  &\subset\bigcup_{n\in A_{j_0}^{m_0}(\ve{n})}B(\tilde{K}_n,\varepsilon/2)
   =\bigcup_{n\in A_{j_0}^{m_0}(\tilde{\ve{n}})}B(\tilde{K}_n,\varepsilon/2)\\
  &\subset B\bigl(N(\tilde{f},\tilde{b}_{j_0}),\tilde{\delta}_{m_0}+\varepsilon/2\bigr)
   \subset B\bigl(N(f,b_{j_0}'),\tilde{\delta}_{m_0}+\varepsilon\bigr)\\
  &\subset B\bigl(N(f,b_{j_0}'),\delta_{m_0}'\bigr).\qedhere
 \end{align*}
\end{proof}

\subsection{Key Proposition}
We reduce the main theorem
(Theorem~\ref{theorem:maintheorem_general} or equivalently Theorem~\ref{theorem:maintheorem2})
to a proposition, which we shall refer to as \emph{Key Proposition}.

\begin{proposition}[Key Proposition]\label{prop:key_prop}
 If $\Ar$ is a residual subset of $\K^{\N}$, then
 a typical function $f\in C(I)$ has the property that
 $(\ve{K},f)\in\X$ for some $\ve{K}\in\Ar$.
\end{proposition}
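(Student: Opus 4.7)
The plan is to play the Banach--Mazur game on $C(I)$. Let $A$ denote the set of $f \in C(I)$ for which there exists $\ve{K} \in \Ar$ with $(\ve{K}, f) \in \X$; by Proposition~\ref{prop:X_analytic}, $A$ is the projection of an analytic set and hence has the Baire property. Since $\Ar$ is residual in $\K^{\N}$, fix open dense subsets $G_1 \supset G_2 \supset \dotsb$ of $\K^{\N}$ with $\bigcap_m G_m \subset \Ar$. It suffices to produce a winning strategy for Player~II in the Banach--Mazur game with target $A$.

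At each step $k$, Player~II chooses parameters $a_k > a_{k-1}$, $b_k > b_{k-1}$ with $a_k < b_k$, a small $\delta_k < \delta_{k-1}$, an integer $n_k \ge n_{k-1} + k - 1$, closed sets $K_{n_{k-1}+1},\dotsc,K_{n_k}$, and a function $g_k \in C^1(I)$ inside Player~I's current open set $U_k$, with $\lVert g_k - g_{k-1}\rVert < 2^{-k}$; Player~II then plays a small ball around $g_k$ contained in $U_k$. The data are maintained so that the truncated $\Y_k$-inclusions
\[
 N(g_k, a_j) \subset \bigcup_{n \in A_j^m(\ve{n}^k)} B(K_n, \delta_m), \qquad
 \bigcup_{n \in A_j^m(\ve{n})} K_n \subset B\bigl(N(g_k, b_j), \delta_m\bigr)
\]
and the truncated $\Sr_k(\ve{n},\ve{\delta})$ condition hold for $j \le m \le k$, and the finite segment $(K_1,\dotsc,K_{n_k})$ admits a continuation to some element of $G_1 \cap \dotsb \cap G_k$. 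The inductive step exploits the bump-function machinery of Section~3: having fixed the new parameters, Player~II takes $\mu_k = \mu(g_{k-1}, a_k, b_k, h_k)$ for a small height $h_k$, selects disjoint finite sets $\hat{H}, \check{H} \subset I$ with $B(\tilde{H}, \mu_k) = I$, and puts $g_k = g_{k-1} + \varphi_k$ with $\varphi_k$ a bump function of height $h_k$ and width $w_k \ll \delta_k$ located at $\hat{H}, \check{H}$. The new $K_n$'s are placed at carefully chosen points of $\hat{H} \cup \check{H}$; Proposition~\ref{prop:bump_easy} keeps any previous lower inclusions on $N(g_k, a_j)$ for $j < k$ that were witnessed by knot points in $\tilde{H}$, while Proposition~\ref{prop:bump_difficult} yields $\tilde{N}(g_k, a_k) \subset \tilde{N}(g_{k-1}, b_k) \cap B(\tilde{H}, w_k)$, which underwrites the level-$k$ inclusion once $w_k$ is chosen sufficiently small.

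The main obstacle is reconciling three competing demands at each step: (i) $\Sr_k(\ve{n},\ve{\delta})$ forces each new $K_n$ with $n \in A_j^m(\ve{n}^k)\setminus A_j^{m-1}(\ve{n}^k)$ to lie within $\delta_m$ of some previous $K_{n'}$ indexed by $n' \in A_{j-1}^{m-1}(\ve{n}^k)$; (ii) the bump propositions require $\hat{H} \cup \check{H}$ to form a net fine enough that $B(\tilde{H}, \mu_k) = I$; (iii) the continuation clause into $G_1 \cap \dotsb \cap G_k$ must be preserved. These are jointly satisfiable because $n_k$ can be taken arbitrarily large, giving ample room to distribute singletons into each index class $A_j^m(\ve{n}^k)\setminus A_j^{m-1}(\ve{n}^k)$; because the earlier $K_{n'}$'s are themselves already spread across $I$, so the neighbourhood constraint amounts only to a mild restriction on where the new net points sit; and because openness of each $G_j$, $j \le k$, absorbs the tiny perturbations incurred when aligning the net with the continuation requirement. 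Strict slack between consecutive $\delta_j$'s keeps every previous inclusion intact despite such perturbations.

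Passing to the uniform limit $f = \lim_k g_k$, Proposition~\ref{prop:continuity} transfers the inclusions from $g_k$ to $f$, so $(\ve{K}, f, \ve{n}, \ve{\delta}, \ve{a}, \ve{b}) \in \Y_{k_0}$ for an appropriate $k_0$ (using Proposition~\ref{prop:Y_increasing} if required), and hence $(\ve{K}, f) \in \X$. By construction $\ve{K} \in \bigcap_k G_k \subset \Ar$, so $f \in A$. Since Player~II wins regardless of Player~I's play, $A$ is residual, proving the proposition.
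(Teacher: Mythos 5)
Your overall architecture matches the paper's: play the Banach--Mazur game with $C^1$ centres, maintain a truncated version of membership in $\Y_k$ via the bump-function machinery (Propositions~\ref{prop:bump_easy}, \ref{prop:bump_difficult}, \ref{prop:continuity}), and recover $\ve{K}\in\Ar$ in the limit by tracking a shrinking basic neighbourhood inside $\U_1\cap\dots\cap\U_k$. The identification of the tools and the structure of the winning strategy is right, and the final passage (transferring from $g_k$ to $f$ by continuity and concluding $(\ve{K},f)\in\X$) is correctly sketched.

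The problem is that the step you explicitly label ``the main obstacle'' is where essentially all the work in the paper's proof lies, and the paragraph you offer in its place does not discharge it. Two concrete gaps. First, your inductive invariant is incomplete: besides the two $\Y_k$-type inclusions and the $\Sr$-condition, the paper carries along a third, \emph{separation} invariant $(\bigstar_m)$:
\[
 \tilde{N}(f,a_j^{m,4})\cap\bigcup_{n\in[n_m]\setminus A_j^m}\bar{B}(\tilde{K}_n^m,w_m)=\emptyset.
\]
This is not bookkeeping; it is what lets Player~II, at round $m$, decide which of the already-chosen $K_n$'s a given point of $N(f_m,a_j)$ is near, and hence which index class the new singletons must be assigned to. Without it, Claim~\ref{claim:L^m}(6) (and its counterpart for $\ve{K}^m$) fails and the induction cannot proceed. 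Second, your argument that the three constraints (the $\Sr$-condition, the $\mu_k$-net condition, and the continuation into $\U_1\cap\dots\cap\U_k$) are jointly satisfiable ``because $n_k$ can be taken arbitrarily large'' and ``the earlier $K_{n'}$'s are already spread across $I$'' does not stand up. Enlarging $n_k$ only adds freedom in the \emph{unconstrained} tail; the genuinely constrained new indices at level $j$ are the fixed blocks $\{n_{m-1}+1,\dots,n_{m-1}+j-1\}$, and both the $\Sr$-condition and the requirement that these new singletons catch the newly visible part of $\tilde{N}(f_m,a_j^{m,1})$ bite precisely there. The paper's construction of the auxiliary finite sets $\tilde{L}_n^m$ --- including the sets $\tilde{P}_j^m=(\tilde{N}(f_m,a_j^{m,1})\setminus\Int\tilde{N}(f_m,a_{j-1}^{m,1}))\cap\cdots$ designed exactly to hit that newly visible part --- and the six-part verification in Claim~\ref{claim:L^m} is not optional: it is the proof. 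Declaring the constraints ``jointly satisfiable'' skips it, and nothing in your proposal indicates how to carry it out.
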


The proof of the key proposition will be given in the next section;
here we only show that it implies the main theorem.

\begin{proposition}
 The key proposition implies the main theorem.
 That is to say, if the key proposition is true,
 then a subfamily $\F$ of $\F_{\sigma}$ is residual if and only if
 $N(f)\in\F$ for a typical function $f\in C(I)$.
\end{proposition}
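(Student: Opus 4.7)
My plan is to prove the two implications separately. The direction (2) $\Rightarrow$ (1) is essentially immediate: assuming $\Ar = \{\ve{K} \in \K^{\N} : \bigcup_n K_n \in \F\}$ is residual, the key proposition produces a residual set of $f \in C(I)$ each carrying some $\ve{K} \in \Ar$ with $(\ve{K}, f) \in \X$, and Proposition~\ref{prop:X_cup=N} then identifies $N(f)$ with $\bigcup_n K_n \in \F$. The substantive work lies in the reverse direction.

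For (1) $\Rightarrow$ (2), the core difficulty is that $\F$ is an arbitrary subfamily of $\F_{\sigma}$, so $\Ar$ itself need not have the Baire property, blocking any direct appeal to the topological zero-one law (Proposition~\ref{prop:zero-one}). My strategy is to replace $\Ar$ by a Baire-measurable companion that still sits inside it. Assuming (1), I would fix a residual $G_{\delta}$ subset $R$ of $\{f \in C(I) : N(f) \in \F\}$ and consider
\[
 \mathscr{B} = \{\ve{K} \in \K^{\N} : (\ve{K}, f) \in \X \text{ for some } f \in R\}.
\]
Since $\X$ is analytic by Proposition~\ref{prop:X_analytic} and $R$ is Borel, $\mathscr{B}$ is the projection of an analytic set and hence is itself analytic, so it has the Baire property; Proposition~\ref{prop:K's_invariant} also ensures that $\mathscr{B}$ is invariant under finite permutations, so Proposition~\ref{prop:zero-one} forces $\mathscr{B}$ to be either meagre or residual.

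To rule out meagreness, I would apply the key proposition to $\K^{\N} \setminus \mathscr{B}$: if $\mathscr{B}$ were meagre then $\K^{\N} \setminus \mathscr{B}$ would be residual, yielding a residual set of $f \in C(I)$ each paired with some $\ve{K} \in \K^{\N} \setminus \mathscr{B}$ satisfying $(\ve{K}, f) \in \X$; intersecting this residual set with $R$ produces such an $f$ lying in $R$, which by definition places the associated $\ve{K}$ in $\mathscr{B}$, a contradiction. Hence $\mathscr{B}$ is residual. Finally, every $\ve{K} \in \mathscr{B}$ comes paired with some $f \in R$ with $(\ve{K}, f) \in \X$, so Proposition~\ref{prop:X_cup=N} gives $\bigcup_n K_n = N(f) \in \F$; this shows $\mathscr{B} \subset \Ar$, so $\Ar$ is residual as a superset of a residual set.

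The single genuine obstacle is the potential lack of Baire-measurability of $\Ar$; the insertion of the auxiliary analytic set $\mathscr{B}$ is the only non-routine idea, with the remainder being a careful chaining of the key proposition, the analyticity of $\X$, the permutation invariance of its $f$-fibres, and the zero-one law.
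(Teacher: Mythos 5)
Your proposal is correct and follows essentially the same path as the paper's proof: same application of the key proposition with $\Ar=\{\ve{K}:\bigcup_n K_n\in\F\}$ for one direction, and for the converse the same auxiliary analytic set (the paper calls it $\Ar$ again, you call it $\mathscr{B}$) built by projecting $\X\cap(\K^{\N}\times G)$, followed by the same invocation of permutation-invariance, the zero-one law, and the key proposition applied to the complement to rule out meagreness. The only differences are notational.
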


\begin{proof}
 Suppose first that $\F$ is residual.
 Then the key proposition applied to
 $\Ar=\{\ve{K}\in\K^{\N}\mid\bigcup_{n=1}^{\infty}K_n\in\F\}$
 tells us that a typical function $f\in C(I)$ has the property that
 $(\ve{K},f)\in\X$ for some $\ve{K}\in\Ar$,
 which implies that $N(f)=\bigcup_{n=1}^{\infty}K_n\in\F$
 by Proposition~\ref{prop:X_cup=N}.

 Conversely, suppose that a typical function $f\in C(I)$ has the property that
 $N(f)\in\F$.
 Then we may take a dense $G_{\delta}$ subset $G$ of $C(I)$
 contained in $\{f\in C(I)\mid N(f)\in\F\}$.
 Write $\Ar$ for the set of all $\ve{K}\in\K^{\N}$
 such that $(\ve{K},f)\in\X$ for some $f\in G$.
 Observe that $\Ar$ is invariant under finite permutations
 because it is a union of sets invariant under finite permutations
 by Proposition~\ref{prop:K's_invariant}.
 Since $\Ar$ is the projection of $\X\cap(\K^{\N}\times G)$ to $\K^{\N}$,
 Proposition~ \ref{prop:X_analytic} shows that $\Ar$ is analytic,
 and so $\Ar$ has the Baire property.
 Therefore Proposition~\ref{prop:zero-one} implies that
 $\Ar$ is either meagre or residual.
 If $\Ar$ is meagre, then the key proposition applied to $\Ar^c$ and
 the residuality of $G$ imply that $(\ve{K},f)\in\X$
 for some $f\in G$ and $\ve{K}\in\Ar^c$,
 which contradicts the definition of $\Ar$.
 Hence $\Ar$ is residual.
 This completes the proof because if $\ve{K}\in\Ar$, then
 for some $f\in G$ we have $\bigcup_{n=1}^{\infty}K_n=N(f)\in\F$
 by Proposition~\ref{prop:X_cup=N}.
\end{proof}

\section{Proof of the key proposition}
This section will be devoted to the proof of
the key proposition (Proposition~\ref{prop:key_prop}).
Let $\Ar$ be a residual subset of $\K^{\N}$,
and define
\[
 S=\{f\in C(I)\mid\text{$(\ve{K},f)\in\X$ for some $\ve{K}\in\Ar$}\}.
\]
We need to prove that $S$ is residual in $C(I)$.

\subsection{Banach-Mazur game}
We use the Banach-Mazur game to prove that $S$ is residual.

\begin{definition}[Banach-Mazur game]\label{defn:BMgame}
 The \emph{Banach-Mazur game} is described as follows.
 Two players, called Player I and Player II, alternately choose
 an open ball in $C(I)$ whose centre is a $C^1$ function,
 with the restriction that
 each player must choose a subset of the set
 chosen by the other player in the previous turn.
 Player~II will win if the intersection of all the sets chosen by the players
 is contained in $S$;
 otherwise Player~I will win.
\end{definition}

There is an easy criterion for deciding whether Player~II has
a winning strategy in the Banach-Mazur game:

\begin{theorem}[{{\cite[Theorem~1]{Oxtoby}}}]\label{thm:BMgame}
 The Banach-Mazur game admits a winning strategy for Player~II
 if and only if $S$ is residual in $C(I)$.
\end{theorem}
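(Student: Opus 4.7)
The theorem is Oxtoby's classical equivalence between residuality and the existence of a winning strategy for Player~II in the Banach-Mazur game. The plan is to prove both implications, with the preliminary observation that the restriction in Definition~\ref{defn:BMgame} to open balls centred at $C^1$ functions is harmless, because such balls form a $\pi$-base for $C(I)$ (every nonempty open subset of $C(I)$ contains such a ball, since $C^1(I)$ is dense in $C(I)$).

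For the easy direction that $S$ residual implies Player~II has a winning strategy, I would write $C(I)\setminus S\subset\bigcup_{n}F_n$ with each $F_n$ closed and nowhere dense, and instruct Player~II to respond to Player~I's $n$th move $U_n$ by an admissible ball $V_n\subset U_n$ whose closure avoids $F_n$ and whose diameter is less than $1/n$. Such a $V_n$ exists because the admissible balls form a $\pi$-base and $F_n$ has empty interior. Then $\bigcap_{n}U_n=\bigcap_{n}V_n$ contains at most one point, which, if it exists, lies outside every $F_n$ and hence in $S$, so Player~II wins.

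For the converse, suppose $\sigma$ is a winning strategy for Player~II. For each finite position $p$ consistent with $\sigma$, write $V_\sigma(p)$ for the last ball played by Player~II (with $V_\sigma(\emptyset)=C(I)$). Using Zorn's lemma, choose a maximal family $\mathcal{F}_p$ of pairwise disjoint admissible balls of the form $V_\sigma(p,U)$, where $U$ ranges over admissible Player~I moves with $\overline{V_\sigma(p,U)}\subset V_\sigma(p)$; maximality forces $\bigcup\mathcal{F}_p$ to be dense in $V_\sigma(p)$. Build a tree $T$ by declaring the children of $p$ to be exactly those $(p,U)$ with $V_\sigma(p,U)\in\mathcal{F}_p$, and set $G_n=\bigcup\{V_\sigma(p):p\in T,\ |p|=n\}$.

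An induction on $n$ shows that the level-$n$ balls are pairwise disjoint and that $G_n$ is open and dense in $C(I)$: density propagates because $\bigcup\mathcal{F}_p$ is dense in $V_\sigma(p)$, and the strict containments $\overline{V_\sigma(p,U)}\subset V_\sigma(p)$ force descendants of distinct nodes to stay within disjoint regions. Hence $\bigcap_{n}G_n$ is comeagre in $C(I)$. For any $x\in\bigcap_{n}G_n$, disjointness pins down a unique node $p_n\in T$ of length $n$ with $x\in V_\sigma(p_n)$; these nodes must form an increasing chain (otherwise $x$ would lie in two disjoint balls), yielding a legal infinite $\sigma$-play with $x\in\bigcap_{n}V_\sigma(p_n)\subset S$. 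Therefore $S$ contains a comeagre set and is residual. The main obstacle is arranging the tree so that both the level-wise disjointness and the strict nestings survive the induction; once that is secured, Baire's theorem and the winning property of $\sigma$ finish the job.
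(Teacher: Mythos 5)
The paper treats this as a cited classical result (Oxtoby, Theorem~1) and gives no proof of its own, so there is nothing internal to compare against; you are being asked, in effect, to reconstruct Oxtoby's argument, and your reconstruction is correct. Your preliminary observation is the essential one: because $C^1(I)$ is dense in $C(I)$, the admissible moves (open balls centred at $C^1$ functions) form a $\pi$-base, which is exactly the hypothesis Oxtoby's theorem needs. The easy direction is sound: writing the complement of $S$ inside $\bigcup_n F_n$ with $F_n$ closed nowhere dense and having Player~II play a ball whose closure misses $F_n$ guarantees that any point of the final intersection lies in $S$, while an empty intersection is trivially contained in $S$ under the paper's winning condition. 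The hard direction is the standard Oxtoby/Choquet scheme: Zorn gives a maximal pairwise disjoint family $\mathcal{F}_p$ of $\sigma$-responses beneath each position $p$, maximality (together with the $\pi$-base property) gives density of $\bigcup\mathcal{F}_p$ in $V_\sigma(p)$, the induction shows each level union $G_n$ is open dense with pairwise disjoint constituents, and for $x\in\bigcap_n G_n$ the unique level-$n$ nodes containing $x$ nest into an infinite $\sigma$-play whose intersection contains $x$ and is contained in $S$.

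Two small remarks. First, the shrinking diameters in the easy direction and the strict closure nesting $\overline{V_\sigma(p,U)}\subset V_\sigma(p)$ in the hard direction are not needed under the paper's convention that Player~II wins whenever the intersection is \emph{contained} in $S$ (empty intersection included); they would matter for the variant where Player~II must also produce a nonempty intersection. They do no harm here. Second, the disjointness of level-$n$ balls across different parents already follows from $V_\sigma(p,U)\subset V_\sigma(p)$ and the inductive disjointness of the parents, not from the strict nesting; your parenthetical attribution of this to the strict containment is a slight misstatement, but the conclusion you need is true for the simpler reason.
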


Therefore it suffices to construct a winning strategy
for Player~II in the Banach-Mazur game.

\subsection{Introduction to the strategy}
Since $\Ar$ is residual,
we may take open dense subsets $\U_m$ of $\K^{\N}$ for $m\in\N$
so that $\bigcap_{m=1}^{\infty}\U_m\subset\Ar$.

We shall use two sequences of positive numbers $a_j$ and $b_j$,
and their cousins $a_j^{m,k}$ and $b_j^{m,k}$.
The numbers $a_j$ are defined by $a_j=j$ for $j\in\N$,
and the numbers $a_j^{m,k}$, where $j\le m$ and $k\in[4]$, are chosen
to satisfy
\begin{align*}
 a_{j+1}=j+1
 &>a_j^{j,1}>a_j^{j,2}>a_j^{j,3}>a_j^{j,4}\\
 &=a_j^{j+1,1}>a_j^{j+1,2}>a_j^{j+1,3}>a_j^{j+1,4}\\
 &=\cdots\\
 &\to a_j=j
\end{align*}
(for example, $a_j^{m,k}=j+2^{-(3m+k)}$).
The numbers $b_j$ are defined in the strategy,
each $b_j$ being determined in the $j$th round,
and they satisfy $b_j<b_{j+1}$ and $b_j>j+2$ for all $j\in\N$.
As soon as each $b_j$ is determined,
the numbers $b_j^{m,k}$ for $m\ge j$ and $k\in[3]$ are chosen to satisfy
\begin{align*}
 j+1<b_j-1
    &<b_j^{j,1}<b_j^{j,2}<b_j^{j,3}\\
    &=b_j^{j+1,1}<b_j^{j+1,2}<b_j^{j+1,3}\\
    &=\cdots\\
    &\to b_j
\end{align*}
(for example, $b_j^{m,k}=b_j-2^{-(2m+k)}$).
Note that $a_j^{m,k}<j+1<b_j^{m',k'}$ for all $j$, $m$, $m'$, $k$, $k'$.

The moves of Players~I and II in the $m$th round will be denoted
by $B(f_m,\alpha_m)$ and $B(g_m,\beta_m)$ respectively.
By the rule of the game, the functions $f_m$ and $g_m$ are
all continuously differentiable.
In the $m$th round, Player~II will construct,
in addition to $g_m$ and $\beta_m$, the following:
a positive number $h_m$,
a positive number $\mu_m$,
finite subsets $\tilde{L}_n^m$ of $I$,
a sequence $\ve{K}^m\in\K^{\N}$
(and its partition $K_n^m=\hat{K}_n^m\amalg\check{K}_n^m$),
a positive integer $n_m$,
a positive number $w_m$,
and a positive number $b_m$ (as mentioned above).
They will be chosen to satisfy a number of properties,
but the following, written as $(\bigstar_m)$ afterwards,
is essential to ensure that the induction proceeds:
if $f\in B(g_m,\beta_m)$, then
\begin{itemize}
 \item $\tilde{N}(f,a_j^{m,4})\subset\bigcup_{n\in A_j^m}B(\tilde{K}_n^m,w_m)$,
 \item $\bigcup_{n\in A_j^m}\tilde{K}_n^m
        \subset B\bigl(\tilde{N}(f,b_j^{m,3}),w_m\bigr)$,
 \item $\tilde{N}(f,a_j^{m,4})
        \cap\bigcup_{n\in[n_m]\setminus A_j^m}\bar{B}(\tilde{K}_n^m,w_m)
        =\emptyset$
\end{itemize}
for every $j\in[m]$.
Here $A_j^m=A_j^m(\ve{n})$, where $\ve{n}=(n_m)$ is the sequence of positive integers
whose $m$th term will be defined in the $m$th round by Player~II.
We must be careful exactly when $A_j^m$ will be determined;
it is true that the whole sequence $\ve{n}$ will be determined only
after the game is over, but since $A_j^m$ depends only on
$n_k$ for $k\in[\max\{j,m-1\}]$,
we can use $A_j^m$ once $n_{\max\{j,m-1\}}$ is determined.

\subsection{First round}
Suppose that Player~I has given his first move $B(f_1,\alpha_1)$.

Let $\D$ denote the dense subset of $\K^{\N}$ consisting of all sequences
whose terms are pairwise disjoint finite sets.
For $\ve{M}\in\K^{\N}$, $l\in\N$, and $r>0$, we set
\[
 \bar{U}(\ve{M},l,r)
 =\{\ve{M}'\in\K^{\N}\mid\text{$d(M_n,M_n')\le r$ for all $n\in[l]$}\}.
\]

\subsubsection{Construction of $h_1$, $\mu_1$, $L_n^1$, $\ve{K}^1$, $n_1$, and $w_1$}
Take $h_1>0$ with $h_1<\alpha_1$,
and set $\mu_1=\mu(f_1,a_1^{1,3},a_1^{1,2},h_1)$
(recall Definition~\ref{definition:mu}).
Put $\tilde{L}_n^1=\emptyset$ for every $n\in\N$.
There exists $\ve{K}^1\in\U_1\cap\D$ such that
we may partition $K_1^1$ as $K_1^1=\hat{K}_1^1\amalg\check{K}_1^1$
in such a way that $B(\tilde{K}_1^1,\mu_1)=I$.
Choose $n_1\in\N$ and $w_1>0$ so that $\bar{U}(\ve{K}^1,n_1,2w_1)\subset\U_1$;
make $w_1$ smaller, if necessary, so that
the balls $\bar{B}(x,w_1)$ for $x\in\bigcup_{n=1}^{n_1}K_n^1$
are disjoint.

\subsubsection{Construction of $g_1$ and $b_1$}
Let $\varphi_1$ be a bump function of height $h_1$ and width $w_1$
located at $\hat{K}_1^1$ and $\check{K}_1^1$.
Define $g_1=f_1+\varphi_1$.
It is clear that $g_1\in B(f_1,\alpha_1)$.
Since $\mu_1=\mu(f_1,a_1^{1,3},a_1^{1,2},h_1)$ and $B(\tilde{K}_1^1,\mu_1)=I$,
Proposition~\ref{prop:bump_difficult} shows that
\[
 \tilde{N}(g_1,a_1^{1,3})
 \subset\tilde{N}(f_1,a_1^{1,2})\cap B(\tilde{K}_1^1,w_1)
 \subset B(\tilde{K}_1^1,w_1)
 \subset\bigcup_{n=1}^{n_1}B(\tilde{K}_n^1,w_1).
\]

Let $b_1>3$ be so large that $b_1^{1,2}\ge\lVert g_1'\rVert$.
Then $\tilde{N}(g_1,b_1^{1,2})=I\supset\bigcup_{n=1}^{n_1}\tilde{K}_n^1$.

Since $A_1^1=[n_1]$, we have
\begin{itemize}
 \item $\tilde{N}(g_1,a_1^{1,3})\subset\bigcup_{n\in A_1^1}B(\tilde{K}_n^1,w_1)$;
 \item $\bigcup_{n\in A_1^1}\tilde{K}_n^1\subset\tilde{N}(g_1,b_1^{1,2})$.
\end{itemize}

\subsubsection{Construction of $\beta_1$}
We may find $\varepsilon_1>0$ such that
\begin{itemize}
 \item $\tilde{N}(g_1,a_1^{1,3})
        \subset\bigcup_{n\in A_1^1}B(\tilde{K}_n^1,w_1-\varepsilon_1)$.
\end{itemize}
By Proposition~\ref{prop:continuity},
there exists $\beta_1>0$ with $B(g_1,\beta_1)\subset B(f_1,\alpha_1)$
such that whenever $f\in B(g_1,\beta_1)$, we have
\begin{itemize}
 \item $\tilde{N}(f,a_1^{1,4})
        \subset B\bigl(\tilde{N}(g_1,a_1^{1,3}),\varepsilon_1\bigr)$;
 \item $\tilde{N}(g_1,b_1^{1,2})
        \subset B\bigl(\tilde{N}(f,b_1^{1,3}),w_1\bigr)$.
\end{itemize}
It follows that whenever $f\in B(g_1,\beta_1)$, we have
\begin{itemize}
 \item $\tilde{N}(f,a_1^{1,4})
        \subset\bigcup_{n\in A_1^1}B(\tilde{K}_n^1,w_1)$;
 \item $\bigcup_{n\in A_1^1}\tilde{K}_n^1
        \subset B\bigl(\tilde{N}(f,b_1^{1,3}),w_1\bigr)$;
 \item $\tilde{N}(f,a_1^{1,4})
        \cap\bigcup_{n\in[n_1]\setminus A_1^1}\bar{B}(\tilde{K}_n^1,w_1)=\emptyset$,
\end{itemize}
the last condition being trivial because $[n_1]\setminus A_1^1=\emptyset$.
Therefore $(\bigstar_1)$ has been established.

\subsection{$m$th round for $m\ge2$}
Let $m\ge2$ and suppose that Player~I has given his $m$th move $B(f_m,\alpha_m)$.
Since the rule of the Banach-Mazur game requires that $f_m\in B(g_{m-1},\beta_{m-1})$,
it follows from $(\bigstar_{m-1})$ that
\begin{itemize}
 \item $\tilde{N}(f_m,a_j^{m,1})
        \subset\bigcup_{n\in A_j^{m-1}}B(\tilde{K}_n^{m-1},w_{m-1})$,
 \item $\bigcup_{n\in A_j^{m-1}}\tilde{K}_n^{m-1}
        \subset B\bigl(\tilde{N}(f_m,b_j^{m,1}),w_{m-1}\bigr)$,
 \item $\tilde{N}(f_m,a_j^{m,1})
        \cap\bigcup_{n\in[n_{m-1}]\setminus A_j^{m-1}}\bar{B}(\tilde{K}_n^{m-1},w_{m-1})
        =\emptyset$
\end{itemize}
for every $j\in[m-1]$
(remember that $a_j^{m-1,4}=a_j^{m,1}$ and $b_j^{m-1,3}=b_j^{m,1}$).

\subsubsection{Construction of $h_m$ and $\mu_m$}
Take $h_m>0$ with $h_m<\alpha_m$, and set
\[
 \mu_m=\min_{j\in[m]}\mu(f_m,a_j^{m,3},a_j^{m,2},h_m)>0.
\]

\subsubsection{Construction of $L_n^m$}
Choosing an auxiliary number $\zeta_m>0$ so that
\begin{itemize}
 \item $\tilde{N}(f_m,a_j^{m,1})
        \subset\bigcup_{n\in A_j^{m-1}}B(\tilde{K}_n^{m-1},w_{m-1}-\zeta_m)$
       for $j\in[m-1]$,
\end{itemize}
we shall define finite subsets $\tilde{L}_n^m$ of $I$ for $n\in[n_{m-1}+m-1]$.

Firstly, let $n\in[n_{m-1}]$ and take the minimum $j\in[m-1]$ with $n\in A_j^{m-1}$.
When $x$ varies in $\tilde{N}(f_m,b_j^{m,1})\cap B(\tilde{K}_n^{m-1},w_{m-1})$,
\begin{itemize}
 \item the open balls $B(x,w_{m-1})$ cover $\tilde{K}_n^{m-1}$;
 \item the open balls $B(x,\mu_m)$ cover
       $\tilde{N}(f_m,a_j^{m,1})\cap\bar{B}(\tilde{K}_n^{m-1},w_{m-1}-\zeta_m)$.
\end{itemize}
The compactness of the sets covered gives us a finite subset $\tilde{L}_n^m$
of $\tilde{N}(f_m,b_j^{m,1})\cap B(\tilde{K}_n^{m-1},w_{m-1})$ such that
\begin{itemize}
 \item $B(\tilde{L}_n^m,w_{m-1})\supset\tilde{K}_n^{m-1}$;
 \item $B(\tilde{L}_n^m,\mu_m)\supset
        \tilde{N}(f_m,a_j^{m,1})\cap\bar{B}(\tilde{K}_n^{m-1},w_{m-1}-\zeta_m)$.
\end{itemize}

Secondly, for $j\in[m-1]\setminus\{1\}$, we set
\[
 \tilde{P}_j^m
 =\bigl(\tilde{N}(f_m,a_j^{m,1})\setminus\Int \tilde{N}(f_m,a_{j-1}^{m,1})\bigr)
  \cap\bigcup_{n\in A_{j-1}^{m-1}}\bar{B}(\tilde{K}_n^{m-1},w_{m-1}-\zeta_m),
\]
and define $\tilde{L}_{n_{m-1}+j-1}^m$ as a finite subset
of $\tilde{P}_j^m$ such that $B(\tilde{L}_{n_{m-1}+j-1}^m,\mu_m)\supset\tilde{P}_j^m$.
This defines $\tilde{L}_n^m$ for $n\in[n_{m-1}+m-2]\setminus[n_{m-1}]$.

Lastly, we define $\tilde{L}_{n_{m-1}+m-1}^m$
as a finite subset of $I\setminus\bigcup_{n=1}^{n_{m-1}+m-2}B(\tilde{L}_n^m,\mu_m)$
such that
$B(\tilde{L}_{n_{m-1}+m-1}^m,\mu_m)
 \supset I\setminus\bigcup_{n=1}^{n_{m-1}+m-2}B(\tilde{L}_n^m,\mu_m)$.

Having defined $\tilde{L}_n^m$ for $n\in[n_{m-1}+m-1]$,
we prove the following claim.
Remember that since $n_1$, \dots, $n_{m-1}$ have already been defined,
we know $A_j^m$ for $j\in[m-1]$.

\begin{claim}\label{claim:L^m}
 We have the following:
 \begin{enumerate}
  \item $d(\tilde{L}_n^m,\tilde{K}_n^{m-1})<w_{m-1}$ for $n\in[n_{m-1}]$;
  \item $\tilde{N}(f_m,a_{m-1}^{m,1})
         \subset\bigcup_{n=1}^{n_{m-1}+m-2}B(\tilde{L}_n^m,\mu_m)$;
  \item $\bigcup_{n\in A_j^m}\tilde{L}_n^m\subset\tilde{N}(f_m,b_j^{m,1})$
        for $j\in[m-1]$;
  \item $\bigcup_{n=1}^{n_{m-1}+m-1}B(\tilde{L}_n^m,\mu_m)=I$;
  \item $\bigcup_{n\in A_j^m\setminus A_j^{m-1}}\tilde{L}_n^m
         \subset\bigcup_{n\in A_{j-1}^{m-1}}B(\tilde{L}_n^m,2w_{m-1})$
        for $j\in[m-1]\setminus\{1\}$;
  \item $\tilde{N}(f_m,a_j^{m,2})\cap
         \bigcup_{n\in[n_{m-1}+m-1]\setminus A_j^m}\tilde{L}_n^m=\emptyset$
        for $j\in[m-1]$.
 \end{enumerate}
\end{claim}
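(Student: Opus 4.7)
The plan is to verify the six parts in order, unpacking the three-stage construction of $\tilde{L}_n^m$ against the inductive information $(\bigstar_{m-1})$. The general idea is that each $\tilde{L}_n^m$ has been tailored to represent, at the finer resolution $\mu_m$, the same level-set data that $\tilde{K}_n^{m-1}$ encoded at resolution $w_{m-1}$. Parts (1), (3), and (4) are essentially immediate from the construction: for (1), both $\tilde{L}_n^m \subset B(\tilde{K}_n^{m-1}, w_{m-1})$ and $B(\tilde{L}_n^m, w_{m-1}) \supset \tilde{K}_n^{m-1}$ are built in; for (4), $\tilde{L}_{n_{m-1}+m-1}^m$ is literally chosen so that its $\mu_m$-neighbourhood fills in whatever the earlier $\mu_m$-neighbourhoods miss; and for (3), writing $j' = \min\{i \in [m-1] : n \in A_i^{m-1}\}$ when $n \in [n_{m-1}]$, one has $\tilde{L}_n^m \subset \tilde{N}(f_m, b_{j'}^{m,1})$ (first batch) and $\tilde{L}_{n_{m-1}+j'-1}^m \subset \tilde{P}_{j'}^m \subset \tilde{N}(f_m, a_{j'}^{m,1})$ (second batch), and the monotonicity $\tilde{N}(f_m, c) \subset \tilde{N}(f_m, c')$ for $c \le c'$ carries everything up to $\tilde{N}(f_m, b_j^{m,1})$ once one checks $j' \le j$.

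For part (2), I would fix $x \in \tilde{N}(f_m, a_{m-1}^{m,1})$ and let $j_0 \in [m-1]$ be minimal with $x \in \tilde{N}(f_m, a_{j_0}^{m,1})$. The $j_0$-th instance of the first bullet of $(\bigstar_{m-1})$, sharpened via the choice of $\zeta_m$, gives some $n \in A_{j_0}^{m-1}$ with $x \in \bar{B}(\tilde{K}_n^{m-1}, w_{m-1}-\zeta_m)$. If $n \notin A_{j_0-1}^{m-1}$ (vacuously so when $j_0 = 1$), then $j_0$ is the minimum index for this $n$, and the first-batch covering $B(\tilde{L}_n^m, \mu_m) \supset \tilde{N}(f_m, a_{j_0}^{m,1}) \cap \bar{B}(\tilde{K}_n^{m-1}, w_{m-1}-\zeta_m)$ captures $x$; otherwise $j_0 \ge 2$ and the minimality of $j_0$ places $x$ in $\tilde{P}_{j_0}^m$, hence in $B(\tilde{L}_{n_{m-1}+j_0-1}^m, \mu_m)$. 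Either index lies in $[n_{m-1}+m-2]$. For part (5), note that $A_j^m \setminus A_j^{m-1} = \{n_{m-1}+1, \dots, n_{m-1}+j-1\}$, so the left-hand side is contained in $\bigcup_{k=1}^{j-1}\tilde{P}_{k+1}^m \subset \bigcup_{n \in A_{j-1}^{m-1}} \bar{B}(\tilde{K}_n^{m-1}, w_{m-1})$ (using $A_k^{m-1} \subset A_{j-1}^{m-1}$ for $k \le j-1$), and part (1) then absorbs $\bar{B}(\tilde{K}_n^{m-1}, w_{m-1})$ into $B(\tilde{L}_n^m, 2w_{m-1})$ by the triangle inequality.

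The main obstacle is part (6). Fix $j \in [m-1]$ and $n \in [n_{m-1}+m-1] \setminus A_j^m$, and split into three cases. If $n \in [n_{m-1}] \setminus A_j^m \subset [n_{m-1}] \setminus A_j^{m-1}$, then $\tilde{L}_n^m \subset B(\tilde{K}_n^{m-1}, w_{m-1}) \subset \bar{B}(\tilde{K}_n^{m-1}, w_{m-1})$ is disjoint from $\tilde{N}(f_m, a_j^{m,1}) \supset \tilde{N}(f_m, a_j^{m,2})$ by the third bullet of $(\bigstar_{m-1})$. If $n = n_{m-1} + j' - 1$ with $j' \in [m-1]\setminus\{1\}$, then $n \notin A_j^m$ forces $j' > j$, so $a_j^{m,2} < a_{j'-1}^{m,1}$ strictly; Corollary~\ref{cor:C1_continuity_single_interior} applied to $f_m \in C^1(I)$ yields $\tilde{N}(f_m, a_j^{m,2}) \subset \Int\tilde{N}(f_m, a_{j'-1}^{m,1})$, which is disjoint from $\tilde{L}_n^m \subset \tilde{P}_{j'}^m$ by the very definition of $\tilde{P}_{j'}^m$. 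If $n = n_{m-1}+m-1$, then $\tilde{L}_n^m$ avoids $\bigcup_{i=1}^{n_{m-1}+m-2} B(\tilde{L}_i^m, \mu_m)$ by construction, and this set already contains $\tilde{N}(f_m, a_{m-1}^{m,1}) \supset \tilde{N}(f_m, a_j^{m,2})$ by part (2). The crux is the middle case: it is precisely the reason for introducing the four-index family $a_j^{m,1} > a_j^{m,2} > a_j^{m,3} > a_j^{m,4}$ rather than a single $a_j$, since the strict gap is what lets us upgrade a closed inclusion into an interior inclusion through the $C^1$-regularity of $f_m$.
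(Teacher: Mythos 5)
Your proposal is correct and follows essentially the same route as the paper's proof: parts (1), (3), (4) read off the construction directly, part (2) splits on the minimal $j_0$ with $x\in\tilde{N}(f_m,a_{j_0}^{m,1})$ and on whether the covering index lies in $A_{j_0-1}^{m-1}$, part (5) chains the definition of $\tilde{P}_k^m$ through part (1) and a triangle inequality, and part (6) treats the three index ranges via $(\bigstar_{m-1})$, Corollary~\ref{cor:C1_continuity_single_interior}, and the choice of $\tilde{L}_{n_{m-1}+m-1}^m$ respectively. The only cosmetic difference is that in case two of part (6) you pass straight to $\tilde{N}(f_m,a_j^{m,2})\subset\Int\tilde{N}(f_m,a_{j'-1}^{m,1})$ rather than going through the intermediate $\tilde{N}(f_m,a_{j'-1}^{m,2})$, which is a small tidy-up of the same idea.
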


\begin{proof}
 \begin{enumerate}
  \item Both $\tilde{L}_n^m\subset B(\tilde{K}_n^{m-1},w_{m-1})$
        and $\tilde{K}_n^{m-1}\subset B(\tilde{L}_n^m,w_{m-1})$
        are clear from the definition of $\tilde{L}_n^m$.
  \item Let $x\in\tilde{N}(f_m,a_{m-1}^{m,1})$ and
        look at the minimum $j\in[m-1]$ with $x\in\tilde{N}(f_m,a_j^{m,1})$.

        If $j=1$, then the definition of $\zeta_m$ tells us that
        $x\in B(\tilde{K}_n^{m-1},w_{m-1}-\zeta_m)$ for some $n\in A_1^{m-1}=[n_1]$;
        for this $n$, the number $j$ taken in the definition of $\tilde{L}_n^m$
        must be $1$, so
        \[
         x\in\tilde{N}(f_m,a_1^{m,1})\cap B(\tilde{K}_n^{m-1},w_{m-1}-\zeta_m)
          \subset B(\tilde{L}_n^m,\mu_m).
        \]

        Now, suppose that $j\in[m-1]\setminus\{1\}$.
        Since $x\in\tilde{N}(f_m,a_j^{m,1})$, we may take $n\in A_j^{m-1}$
        with $x\in B(\tilde{K}_n^{m-1},w_{m-1}-\zeta_m)$.
        If $n\notin A_{j-1}^{m-1}$,
        then the number $j$ taken in the definition of $\tilde{L}_n^m$
        must be the same as our $j$, and so
        \[
         x\in\tilde{N}(f_m,a_j^{m,1})\cap B(\tilde{K}_n^{m-1},w_{m-1}-\zeta_m)
          \subset B(\tilde{L}_n^m,\mu_m).
        \]
        If $n\in A_{j-1}^{m-1}$, then $x\in\tilde{P}_j^m$ because
        $x\notin\tilde{N}(f_m,a_{j-1}^{m,1})
         \supset \Int\tilde{N}(f_m,a_{j-1}^{m,1})$
        by the minimality of $j$;
        therefore $x\in B(\tilde{L}_{n_{m-1}+j-1}^m,\mu_m)$,
        which implies the required inclusion.
  \item Let $x\in\bigcup_{n\in A_j^m}\tilde{L}_n^m$ and
        take $n\in A_j^m$ with $x\in\tilde{L}_n^m$.
        If $n\in A_j^{m-1}$,
        then taking the minimum $i$ with $n\in A_i^{m-1}$, we have
        \[
         x\in\tilde{L}_n^m\subset\tilde{N}(f_m,b_i^{m,1})
         \subset\tilde{N}(f_m,b_j^{m,1}).
        \]
        If $n\notin A_j^{m-1}$, then $j\ge2$ and $n_{m-1}+1\le n\le n_{m-1}+j-1$,
        from which it follows that
        \begin{align*}
         x&\in\tilde{L}_n^m\subset\tilde{P}_{n-n_{m-1}+1}^m
           \subset\tilde{N}(f_m,a_{n-n_{m-1}+1}^{m,1})\\
          &\subset\tilde{N}(f_m,a_j^{m,1})
           \subset\tilde{N}(f_m,b_j^{m,1}).
        \end{align*}
  \item Immediate from the definition of $\tilde{L}_{n_{m-1}+m-1}$.
  \item We have
        \begin{align*}
         \bigcup_{n\in A_j^m\setminus A_j^{m-1}}\tilde{L}_n^m
         &=\bigcup_{n=n_{m-1}+1}^{n_{m-1}+j-1}\tilde{L}_n^m
          \subset\bigcup_{k=2}^{j}\tilde{P}_k^m\\
         &\subset\bigcup_{k=2}^{j}\bigcup_{n\in A_{k-1}^{m-1}}
                 \bar{B}(\tilde{K}_n^{m-1},w_{m-1}-\zeta_m)\\
         &=\bigcup_{n\in A_{j-1}^{m-1}}\bar{B}(\tilde{K}_n^{m-1},w_{m-1}-\zeta_m)\\
         &\subset\bigcup_{n\in A_{j-1}^{m-1}}B(\tilde{L}_n^m,2w_{m-1}),
        \end{align*}
        where the last inclusion follows from (1).
  \item We need to show that
        $\tilde{N}(f_m,a_j^{m,2})\cap\tilde{L}_n^m=\emptyset$
        for $n\in[n_{m-1}+m-1]\setminus A_j^m$.
        There are three cases:
        $n\in[n_{m-1}]\setminus A_j^{m-1}$,
        $n_{m-1}+j\le n\le n_{m-1}+m-2$, and $n=n_{m-1}+m-1$.

        If $n\in[n_{m-1}]\setminus A_j^{m-1}$, then
        \[
         \tilde{N}(f_m,a_j^{m,2})\cap\tilde{L}_n^m
         \subset\tilde{N}(f_m,a_j^{m,1})\cap B(\tilde{K}_n^{m-1},w_{m-1})
         =\emptyset
        \]
        by $(\bigstar_{m-1})$.

        If $n_{m-1}+j\le n\le n_{m-1}+m-2$, then
        \begin{align*}
         \tilde{N}(f_m,a_j^{m,2})\cap\tilde{L}_n^m
         &\subset\tilde{N}(f_m,a_j^{m,2})\cap\tilde{P}_{n-n_{m-1}+1}^m\\
         &\subset\tilde{N}(f_m,a_{n-n_{m-1}}^{m,2})
                 \setminus\Int\tilde{N}(f_m,a_{n-n_{m-1}}^{m,1})\\
         &=\emptyset
        \end{align*}
        because of Corollary~\ref{cor:C1_continuity_single_interior}.

        If $n=n_{m-1}+m-1$, then (2) implies that
        \begin{align*}
         \tilde{N}(f_m,a_j^{m,2})\cap\tilde{L}_n^m
         &\subset\tilde{N}(f_m,a_{m-1}^{m,1})\cap\tilde{L}_n^m\\
         &\subset\bigcup_{n'=1}^{n_{m-1}+m-2}B(\tilde{L}_{n'}^m,\mu_m)
           \cap\tilde{L}_{n_{m-1}+m-1}^m\\
         &=\emptyset
        \end{align*}
        because of the choice of $\tilde{L}_{n_{m-1}+m-1}^m$.\qedhere
 \end{enumerate}
\end{proof}

\subsubsection{Construction of $\ve{K}^m$}
We shall construct a sequence $\ve{K}^m\in\U_m\cap\D$
such that we may partition $K_n^m=\hat{K}_n^m\amalg\check{K}_n^m$
for each $n\in\N$ in such a way that the following conditions are fulfilled:
\begin{enumerate}
 \item $d(\tilde{K}_n^m,\tilde{K}_n^{m-1})<w_{m-1}$ for $n\in[n_{m-1}]$;
 \stepcounter{enumi}
 \item $\bigcup_{n\in A_j^m}\tilde{K}_n^m\subset\Int\tilde{N}(f_m,b_j^{m,2})$
       for $j\in[m-1]$;
 \item $\bigcup_{n=1}^{n_{m-1}+m-1}B(\tilde{K}_n^m,\mu_m)=I$;
 \item $\bigcup_{n\in A_j^m\setminus A_j^{m-1}}\tilde{K}_n^m
        \subset\bigcup_{n\in A_{j-1}^{m-1}}B(\tilde{K}_n^m,2w_{m-1})$
       for $j\in[m-1]\setminus\{1\}$;
 \item $\tilde{N}(f_m,a_j^{m,2})\cap
        \bigcup_{n\in[n_{m-1}+m-1]\setminus A_j^m}\tilde{K}_n^m=\emptyset$
       for $j\in[m-1]$
\end{enumerate}
(these are the relations of Claim~\ref{claim:L^m} (1), (3), (4), (5), (6) with
$\tilde{L}_n^m$ replaced by $\tilde{K}_n^m$ and
with $\tilde{N}(f_m,b_j^{m,1})$ replaced by $\Int\tilde{N}(f_m,b_j^{m,2})$ in (3)).

We note that Claim~\ref{claim:L^m} (3)
and Corollary~\ref{cor:C1_continuity_single_interior}
show that
$\bigcup_{n\in A_j^m}\tilde{L}_n^m\subset\Int\tilde{N}(f_m,b_j^{m,2})$
for $j\in[m-1]$.
Therefore, by Claim~\ref{claim:L^m},
if we choose \emph{disjoint} finite subsets
$\hat{Q}_1^m$, \dots, $\hat{Q}_{n_{m-1}+m-1}^m$,
$\check{Q}_1^m$, \dots, $\check{Q}_{n_{m-1}+m-1}^m$ of $I$
so that the distances $d(\tilde{Q}_n^m,\tilde{L}_n^m)$
for $n\in[n_{m-1}+m-1]$ are sufficiently small,
then they satisfy the following conditions:
\begin{enumerate}
 \item $d(\tilde{Q}_n^m,\tilde{K}_n^{m-1})<w_{m-1}$ for $n\in[n_{m-1}]$;
 \stepcounter{enumi}
 \item $\bigcup_{n\in A_j^m}\tilde{Q}_n^m\subset\Int\tilde{N}(f_m,b_j^{m,2})$
       for $j\in[m-1]$;
 \item $\bigcup_{n=1}^{n_{m-1}+m-1}B(\tilde{Q}_n^m,\mu_m)=I$;
 \item $\bigcup_{n\in A_j^m\setminus A_j^{m-1}}\tilde{Q}_n^m
        \subset\bigcup_{n\in A_{j-1}^{m-1}}B(\tilde{Q}_n^m,2w_{m-1})$
       for $j\in[m-1]\setminus\{1\}$;
 \item $\tilde{N}(f_m,a_j^{m,2})\cap
        \bigcup_{n\in[n_{m-1}+m-1]\setminus A_j^m}\tilde{Q}_n^m=\emptyset$
       for $j\in[m-1]$.
\end{enumerate}
Since $\ve{K}^m$ must belong to $\U_m$,
we consider $\ve{K}^m\in\U_m\cap\D$ such that
the distances $d(K_n^m,\hat{Q}_n^m\amalg\check{Q}_n^m)$ for $n\in[n_{m-1}+m-1]$
are so small that each point in $K_n^m$ has the unique closest point
in $\hat{Q}_n^m\amalg\check{Q}_n^m$.
If the distances $d(K_n^m,\hat{Q}_n^m\amalg\check{Q}_n^m)$ are sufficiently small,
the sequence $\ve{K}^m$ satisfies the required conditions.

\subsubsection{Construction of $n_m$ and $w_m$}
Choose $n_m\in\N$ and $w_m>0$ so that
\begin{itemize}
 \item $n_m\ge n_{m-1}+m-1$;
 \item $w_m<w_{m-1}/2$;
 \item $\bar{U}(\ve{K}^m,n_m,2w_m)\subset\U_m$;
 \item $\tilde{N}(f_m,a_j^{m,2})
        \cap\bigcup_{n\in[n_{m-1}+m-1]\setminus A_j^m}\bar{B}(\tilde{K}_n^m,w_m)
        =\emptyset$
       for $j\in[m-1]$.
\end{itemize}
Make $w_m$ smaller, if necessary, so that
\begin{itemize}
 \item the balls $\bar{B}(x,w_m)$ for $x\in\bigcup_{n=1}^{n_m}K_n^m$ are disjoint.
\end{itemize}

\subsubsection{Construction of $g_m$ and $b_m$}
Take a bump function $\varphi_m$ of height $h_m$ and width $w_m$
located at $\bigcup_{n=1}^{n_{m-1}+m-1}\hat{K}_n^m$
and $\bigcup_{n=1}^{n_{m-1}+m-1}\check{K}_n^m$,
and set $g_m=f_m+\varphi_m$.

Let $b_m>\max\{m+2,b_{m-1}\}$ be so large that $b_m^{m,2}\ge\lVert g_m'\rVert$.

\begin{claim}\label{claim:g_m}
 \begin{enumerate}
  \item $\tilde{N}(g_m,a_j^{m,3})\subset\bigcup_{n\in A_j^m}B(\tilde{K}_n^m,w_m)$
        for $j\in[m]$.
  \item $\bigcup_{n\in A_j^m}\tilde{K}_n^m\subset\tilde{N}(g_m,b_j^{m,2})$
        for $j\in[m]$.
  \item $\tilde{N}(g_m,a_j^{m,3})
         \cap\bigcup_{n\in[n_m]\setminus A_j^m}\bar{B}(\tilde{K}_n^m,w_m)=\emptyset$
        for $j\in[m]$.
 \end{enumerate}
\end{claim}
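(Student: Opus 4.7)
The plan is to prove the three parts in the order (2), (1), (3), since part (3) will rely on part (1), and each part fundamentally comes from one of the two bump-function propositions (\ref{prop:bump_easy} and \ref{prop:bump_difficult}) together with the list of conditions~(1)--(6) imposed on $\ve{K}^m$, the fourth bullet in the construction of $n_m$ and $w_m$, and the disjointness of the balls $\bar B(x,w_m)$ for $x\in\bigcup_{n=1}^{n_m}K_n^m$.

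For part~(2), the case $j=m$ is trivial: since $b_m^{m,2}\ge\lVert g_m'\rVert$, the mean value theorem shows $\tilde N(g_m,b_m^{m,2})=I\supset\bigcup_{n\in A_m^m}\tilde K_n^m$. For $j\in[m-1]$, I would combine condition~(3) on $\ve{K}^m$, namely $\bigcup_{n\in A_j^m}\tilde K_n^m\subset\Int\tilde N(f_m,b_j^{m,2})\subset\tilde N(f_m,b_j^{m,2})$, with Proposition~\ref{prop:bump_easy} applied to the bump $\varphi_m$ (which is located at $\bigcup_{n=1}^{n_{m-1}+m-1}\hat K_n^m$ and $\bigcup_{n=1}^{n_{m-1}+m-1}\check K_n^m$) to transfer this inclusion from $f_m$ to $g_m$, noting that $A_j^m\subset[n_{m-1}+m-1]$ for $j\in[m-1]$.

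For part~(1), the workhorse is Proposition~\ref{prop:bump_difficult} applied with $a=a_j^{m,3}$, $b=a_j^{m,2}$, $h=h_m$: by construction $\mu_m\le\mu(f_m,a_j^{m,3},a_j^{m,2},h_m)$, and condition~(4) on $\ve{K}^m$ gives $B\bigl(\bigcup_{n=1}^{n_{m-1}+m-1}\tilde K_n^m,\mu_m\bigr)=I$, so the proposition yields
\[
 \tilde N(g_m,a_j^{m,3})\subset\tilde N(f_m,a_j^{m,2})\cap\bigcup_{n=1}^{n_{m-1}+m-1}B(\tilde K_n^m,w_m).
\]
For $j=m$ the inclusion in the claim is immediate because $A_m^m=[n_m]\supset[n_{m-1}+m-1]$. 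For $j\in[m-1]$, the fourth bullet from the $n_m,w_m$-construction states that $\tilde N(f_m,a_j^{m,2})\cap\bar B(\tilde K_n^m,w_m)=\emptyset$ whenever $n\in[n_{m-1}+m-1]\setminus A_j^m$, so the index $n$ above is forced to lie in $A_j^m$, which is exactly what is needed.

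Part~(3) follows from part~(1) and the geometric separation built into the construction. For $j=m$ the statement is vacuous since $[n_m]\setminus A_m^m=\emptyset$. For $j\in[m-1]$, suppose some $x\in\tilde N(g_m,a_j^{m,3})$ also lies in $\bar B(\tilde K_n^m,w_m)$ for some $n\in[n_m]\setminus A_j^m$. Part~(1) produces $n'\in A_j^m$ with $x\in B(\tilde K_{n'}^m,w_m)$. Since the closed balls $\bar B(y,w_m)$ for $y\in\bigcup_{n=1}^{n_m}K_n^m$ are disjoint, the points of $K_n^m$ and $K_{n'}^m$ witnessing these two memberships must coincide, and the pairwise disjointness of the $K_n^m$'s (which holds because $\ve{K}^m\in\D$) forces $n=n'$, contradicting $n'\in A_j^m$ and $n\notin A_j^m$. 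The main subtlety is part~(1), where one must line up Proposition~\ref{prop:bump_difficult} with the a~priori separation given by the fourth bullet in the choice of $n_m$ and $w_m$; parts~(2) and~(3) are then essentially bookkeeping on top of this.
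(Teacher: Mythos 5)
Your proof is correct and follows the same approach as the paper: part~(1) from Proposition~\ref{prop:bump_difficult} together with property~(4) of $\ve{K}^m$ and the fourth bullet in the choice of $w_m$; part~(2) from Proposition~\ref{prop:bump_easy} and property~(3) of $\ve{K}^m$ (with the $j=m$ case handled by $b_m^{m,2}\ge\lVert g_m'\rVert$); and part~(3) from part~(1) plus the disjointness of the closed $w_m$-balls around points of $\bigcup_{n=1}^{n_m}K_n^m$. The only difference from the paper is the cosmetic reordering (2), (1), (3) instead of (1), (2), (3).
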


\begin{proof}
 \begin{enumerate}
  \item Remember the definition of $\mu_m$ and property (4) of $\ve{K}^m$.
        If $j=m$, then $A_j^m=A_m^m=[n_m]$ and
        \begin{align*}
         \tilde{N}(g_m,a_m^{m,3})
         &\subset\tilde{N}(f_m,a_m^{m,2})
                 \cap\bigcup_{n=1}^{n_{m-1}+m-1}B(\tilde{K}_n^m,w_m)\\
         &\subset\bigcup_{n=1}^{n_{m-1}+m-1}B(\tilde{K}_n^m,w_m)
          \subset\bigcup_{n=1}^{n_m}B(\tilde{K}_n^m,w_m).
        \end{align*}
        If $j\in[m-1]$, then the choice of $w_m$ implies that
        \begin{align*}
         \tilde{N}(g_m,a_j^{m,3})
         &\subset\tilde{N}(f_m,a_j^{m,2})
                 \cap\bigcup_{n=1}^{n_{m-1}+m-1}B(\tilde{K}_n^m,w_m)\\
         &\subset\bigcup_{n\in A_j^m}B(\tilde{K}_n^m,w_m).
        \end{align*}
  \item If $j=m$, then the choice of $b_m$ implies that
        \[
         \tilde{N}(g_m,b_j^{m,2})=\tilde{N}(g_m,b_m^{m,2})=I
         \supset\bigcup_{n\in A_j^m}\tilde{K}_n^m.
        \]
        If $j\in[m-1]$, then property (3) of $\ve{K}^m$ and
        Proposition~\ref{prop:bump_easy} show that
        \[
         \bigcup_{n\in A_j^m}\tilde{K}_n^m
         \subset\bigcup_{n=1}^{n_{m-1}+m-1}\tilde{K}_n^m\cap\tilde{N}(f_m,b_j^{m,2})
         \subset\tilde{N}(g_m,b_j^{m,2}).
        \]
  \item If $j=m$, then the claim is trivial because $[n_m]\setminus A_j^m=\emptyset$.
        If $j\in[m-1]$, then (1) and the choice of $w_m$ show that
        \begin{align*}
         &\tilde{N}(g_m,a_j^{m,3})
          \cap\bigcup_{n\in[n_m]\setminus A_j^m}\bar{B}(\tilde{K}_n^m,w_m)\\
         &\qquad\subset
          \bigcup_{n\in A_j^m}B(\tilde{K}_n^m,w_m)
          \cap\bigcup_{n\in[n_m]\setminus A_j^m}\bar{B}(\tilde{K}_n^m,w_m)
          =\emptyset.\qedhere
        \end{align*}
 \end{enumerate}
\end{proof}

\subsubsection{Construction of $\beta_m$}
We choose $\beta_m>0$ as in the following claim:

\begin{claim}\label{claim:beta_m}
 There exists $\beta_m>0$ with $B(g_m,\beta_m)\subset B(f_m,\alpha_m)$ such that
 if $f\in B(g_m,\beta_m)$, then
 \begin{itemize}
  \item $\tilde{N}(f,a_j^{m,4})\subset\bigcup_{n\in A_j^m}B(\tilde{K}_n^m,w_m)$,
  \item $\bigcup_{n\in A_j^m}\tilde{K}_n^m\subset
         B\bigl(\tilde{N}(f,b_j^{m,3}),w_m\bigr)$,
  \item $\tilde{N}(f,a_j^{m,4})
         \cap\bigcup_{n\in[n_m]\setminus A_j^m}\bar{B}(\tilde{K}_n^m,w_m)=\emptyset$
 \end{itemize}
 for every $j\in[m]$.
\end{claim}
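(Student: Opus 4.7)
The plan is to mimic the construction of $\beta_1$ from the first round: first tighten the inclusions in Claim~\ref{claim:g_m} by introducing a small uniform buffer $\varepsilon_m>0$, and then appeal to Proposition~\ref{prop:continuity} to propagate these strict inclusions from $g_m$ to arbitrary $f\in B(g_m,\beta_m)$. Everything can be made uniform over $j\in[m]$ because $[m]$ is finite and all the sets involved are compact.

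For the tightening step, I use the lemma at the beginning of Section~3 (a compact set contained in a union of open balls already lies in a slightly shrunken version) together with the fact that two disjoint compact sets lie at positive distance, to pick $\varepsilon_m\in(0,w_m)$ so that, for every $j\in[m]$,
\[
 \tilde{N}(g_m,a_j^{m,3})\subset\bigcup_{n\in A_j^m}B\bigl(\tilde{K}_n^m,w_m-\varepsilon_m\bigr)
\]
and
\[
 \tilde{N}(g_m,a_j^{m,3})\cap\bigcup_{n\in[n_m]\setminus A_j^m}\bar{B}\bigl(\tilde{K}_n^m,w_m+\varepsilon_m\bigr)=\emptyset.
\]
These strengthen parts~(1) and~(3) of Claim~\ref{claim:g_m}.

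Since $a_j^{m,4}<a_j^{m,3}$ and $b_j^{m,2}<b_j^{m,3}$ for every $j\in[m]$, Proposition~\ref{prop:continuity} supplies $\delta_{j,a},\delta_{j,b}>0$ guaranteeing $\tilde{N}(f,a_j^{m,4})\subset B(\tilde{N}(g_m,a_j^{m,3}),\varepsilon_m)$ and $\tilde{N}(g_m,b_j^{m,2})\subset B(\tilde{N}(f,b_j^{m,3}),w_m)$ whenever $\lVert f-g_m\rVert$ is smaller than the respective $\delta$. Setting
\[
 \beta_m=\min\Bigl(\alpha_m-h_m,\;\min_{j\in[m]}\delta_{j,a},\;\min_{j\in[m]}\delta_{j,b}\Bigr),
\]
the first entry of the minimum ensures $B(g_m,\beta_m)\subset B(f_m,\alpha_m)$, because $\lVert g_m-f_m\rVert=\lVert\varphi_m\rVert=h_m<\alpha_m$.

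For $f\in B(g_m,\beta_m)$ and $j\in[m]$, the first bullet of the claim follows from
\[
 \tilde{N}(f,a_j^{m,4})\subset B\bigl(\tilde{N}(g_m,a_j^{m,3}),\varepsilon_m\bigr)\subset\bigcup_{n\in A_j^m}B(\tilde{K}_n^m,w_m),
\]
the second from Claim~\ref{claim:g_m}(2) combined with
\[
 \bigcup_{n\in A_j^m}\tilde{K}_n^m\subset\tilde{N}(g_m,b_j^{m,2})\subset B\bigl(\tilde{N}(f,b_j^{m,3}),w_m\bigr),
\]
and the third from the strengthened Claim~\ref{claim:g_m}(3): any $x\in\tilde{N}(f,a_j^{m,4})$ lies within $\varepsilon_m$ of some $y\in\tilde{N}(g_m,a_j^{m,3})$, so $x$ cannot lie within $w_m$ of any $\tilde{K}_n^m$ with $n\in[n_m]\setminus A_j^m$ without $y$ lying in $\bar{B}(\tilde{K}_n^m,w_m+\varepsilon_m)$, contradicting the tightened disjointness. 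There is no real obstacle here — all the substantive work of the $m$th round has already been absorbed into the construction of $\ve{K}^m$ and into Claim~\ref{claim:g_m} — and the only subtlety is choosing $\varepsilon_m$ uniformly across the finitely many $j$ and using it consistently on both sides, shrinking by $\varepsilon_m$ for the first bullet and enlarging by $\varepsilon_m$ for the third.
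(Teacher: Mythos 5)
Your proposal is correct and follows essentially the same path as the paper's proof: tighten the three inclusions of Claim~\ref{claim:g_m} by a uniform $\varepsilon_m>0$ (using compactness and the finiteness of $j\in[m]$), then invoke Proposition~\ref{prop:continuity} to transfer them from $g_m$ to all $f\in B(g_m,\beta_m)$. The only cosmetic difference is that you place the $\varepsilon_m$-enlargement on $\bar{B}(\tilde{K}_n^m,\cdot)$ rather than on $\tilde{N}(g_m,a_j^{m,3})$ in the disjointness condition, which is interchangeable for the purposes of the argument.
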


\begin{proof}
 By Claim~\ref{claim:g_m}, we may find $\varepsilon_m>0$ such that
 \begin{itemize}
  \item $\tilde{N}(g_m,a_j^{m,3})
         \subset\bigcup_{n\in A_j^m}B(\tilde{K}_n^m,w_m-\varepsilon_m)$,
  \item $\bigcup_{n\in A_j^m}\tilde{K}_n^m\subset\tilde{N}(g_m,b_j^{m,2})$,
  \item $B\bigl(\tilde{N}(g_m,a_j^{m,3}),\varepsilon_m\bigr)
         \cap\bigcup_{n\in[n_m]\setminus A_j^m}\bar{B}(\tilde{K}_n^m,w_m)=\emptyset$
 \end{itemize}
 for every $j\in[m]$
 (note that there is no $\varepsilon_m$ in the second condition).
 By Proposition~\ref{prop:continuity},
 there exists $\beta_m>0$ with $B(g_m,\beta_m)\subset B(f_m,\alpha_m)$
 such that if $f\in B(g_m,\beta_m)$, then
 \begin{itemize}
  \item $\tilde{N}(f,a_j^{m,4})
         \subset B\bigl(\tilde{N}(g_m,a_j^{m,3}),\varepsilon_m\bigr)$,
  \item $\tilde{N}(g_m,b_j^{m,2})
         \subset B\bigl(\tilde{N}(f,b_j^{m,3}),w_m\bigr)$
 \end{itemize}
 for every $j\in[m]$.
 It is easy to see that this $\beta_m$ satisfies the required condition.
\end{proof}

\subsection{Proof that the strategy makes Player~II win}
\begin{proposition}\label{prop:K_n^m_convergence}
 \begin{enumerate}
  \item For every $n\in\N$, the sequence $(K_n^m)_{m\in\N}$ converges in $\K$.
        Denote the limit by $K_n$.
  \item We have $d(K_n,K_n^m)\le2w_m$ whenever $n\in[n_m]$.
  \item The sequence $\ve{K}=(K_n)_{n\in\N}$ belongs to $\Ar$.
 \end{enumerate}
\end{proposition}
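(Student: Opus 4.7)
The plan is to derive everything from two recorded features of the construction: the Hausdorff estimate $d(\tilde K_n^m,\tilde K_n^{m-1})<w_{m-1}$ that holds for $n\in[n_{m-1}]$ (property (1) of $\ve K^m$), together with the geometric decay $w_m<w_{m-1}/2$ and the inclusion $\bar U(\ve K^m,n_m,2w_m)\subset\U_m$ imposed when $n_m$ and $w_m$ were chosen.

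First I would prove (1). Fix $n$ and pick $m_0$ large enough that $n\le n_{m_0}$; this is possible because $n_m\ge n_{m-1}+m-1$ forces $n_m\to\infty$. Since $n\le n_m$ for every $m\ge m_0$, the tilde inequality gives $d(\hat K_n^{m+1},\hat K_n^m)<w_m$ and $d(\check K_n^{m+1},\check K_n^m)<w_m$, hence
\[
d(K_n^{m+1},K_n^m)\le\max\bigl(d(\hat K_n^{m+1},\hat K_n^m),d(\check K_n^{m+1},\check K_n^m)\bigr)<w_m.
\]
Iterating $w_{k+1}<w_k/2$ yields $w_k<w_m\cdot 2^{m-k}$ for every $k\ge m$, so for $m_0\le m\le m'$,
\[
d(K_n^{m'},K_n^m)\le\sum_{k=m}^{m'-1}w_k<w_m\sum_{j=0}^{\infty}2^{-j}=2w_m\to0.
\]
Thus $(K_n^m)_m$ is Cauchy in the complete metric space $\K$, and converges to some $K_n\in\K$.

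For (2), the same estimate, applied for any $n\in[n_m]$ (so that $n\in[n_k]$ for all $k\ge m$), gives $d(K_n^{m'},K_n^m)<2w_m$ for every $m'\ge m$; letting $m'\to\infty$ and using continuity of $d(\cdot,K_n^m)$ in the first argument yields $d(K_n,K_n^m)\le 2w_m$.

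Finally, (3) follows immediately: fix $m\in\N$. By (2), $d(K_n,K_n^m)\le 2w_m$ for every $n\in[n_m]$, so $\ve K\in\bar U(\ve K^m,n_m,2w_m)\subset\U_m$. Since this holds for all $m$, we have $\ve K\in\bigcap_{m=1}^{\infty}\U_m\subset\Ar$. There is no real obstacle here — the only point to be careful about is that the tilde bound $d(\tilde K_n^m,\tilde K_n^{m-1})<w_{m-1}$ only applies when $n\in[n_{m-1}]$, which is why the Cauchy argument must start from the threshold $m_0$ with $n\le n_{m_0}$; everything else is summing a geometric series.
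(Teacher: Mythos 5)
Your proof is correct and follows essentially the same route as the paper's: pick $m_0$ with $n\le n_{m_0}$, telescope the increments $d(K_n^{k+1},K_n^k)<w_k$ using $w_{k+1}<w_k/2$ to get the geometric bound $d(K_n^{m'},K_n^m)<2w_m$, conclude Cauchyness and convergence, pass to the limit in $m'$ for (2), and feed the resulting membership $\ve K\in\bar U(\ve K^m,n_m,2w_m)\subset\U_m$ into $\bigcap_m\U_m\subset\Ar$ for (3). The only detail you make explicit that the paper leaves implicit is the passage from the $\hat{\ }$/$\check{\ }$ bounds to the bound on $d(K_n^{m+1},K_n^m)$, which is a correct and welcome clarification.
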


\begin{proof}
 Remember the following:
 \begin{itemize}
  \item if $n\in[n_m]$, then $d(K_n^{m+1},K_n^m)<w_m$
        because $d(\tilde{K}_n^{m+1},\tilde{K}_n^m)<w_m$;
  \item $w_{m+1}<w_m/2$ and $\bar{U}(\ve{K}^m,n_m,2w_m)\subset\U_m$.
 \end{itemize}

 \begin{enumerate}
  \item Fix $n\in\N$ and denote by $m_0$ the least positive integer with $n\in[n_{m_0}]$.
        Then, since $d(K_n^{m+1},K_n^m)<w_m$ for all $m\ge m_0$,
        we have, for all $m$ and $m'$ with $m_0\le m<m'$,
        \[
         d(K_n^{m'},K_n^m)
         \le\sum_{k=m}^{m'-1}d(K_n^{k+1},K_n^k)
         <\sum_{k=m}^{m'-1}w_k
         \le\sum_{k=m}^{m'-1}2^{-(k-m)}w_m<2w_m.
        \]
        It follows that $(K_n^m)_{m\in\N}$ is a Cauchy sequence and therefore converges.
  \item Obvious from the estimate in the proof of (1).
  \item It follows from (2) that
        \[
         \ve{K}\in\bigcap_{m=1}^{\infty}\bar{U}(\ve{K}^m,n_m,2w_m)
         \subset\bigcap_{m=1}^{\infty}\U_m\subset\Ar.\qedhere
        \]
 \end{enumerate}
\end{proof}

\begin{proposition}\label{prop:N(f)_approximation}
 If $f\in\bigcap_{m=1}^{\infty}B(g_m,\beta_m)$, then
 \[
  N(f,a_j)\subset\bigcup_{n\in A_j^m}B(K_n,3w_m)\quad\text{and}\quad
  \bigcup_{n\in A_j^m}K_n\subset B\bigl(N(f,b_j),3w_m\bigr)
 \]
 whenever $j\le m$.
\end{proposition}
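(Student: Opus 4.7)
The plan is to read off the two inclusions directly from the property $(\bigstar_m)$ that was built into the strategy, together with the convergence estimate from Proposition~\ref{prop:K_n^m_convergence}(2). Since $f \in B(g_m, \beta_m)$ is guaranteed by hypothesis for every $m$, I can feed $(\bigstar_m)$ into the calculation for any fixed $m \ge j$.

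First I would record a brief monotonicity observation: if $0 < a < a'$, then $N(f, a) \subset N(f, a')$. This is immediate from the definitions of $N^{\pm}(f,\cdot)$ and $N_{\pm}(f,\cdot)$, because increasing $a$ both weakens the linear bound $f(y) - f(x) \le a(y - x)$ and shrinks the interval $[x, x + 2^{-a}]$ on which it must hold. Applied to the strict inequalities $a_j < a_j^{m,4}$ and $b_j^{m,3} < b_j$ set up in the strategy, this gives $N(f, a_j) \subset N(f, a_j^{m,4})$ and $N(f, b_j^{m,3}) \subset N(f, b_j)$.

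Next I would invoke $(\bigstar_m)$. Applied to hat and check separately and then unioned (via Convention~\ref{conv:tilde}), it yields, for every $j \in [m]$,
\[
N(f, a_j^{m,4}) \subset \bigcup_{n \in A_j^m} B(K_n^m, w_m), \qquad \bigcup_{n \in A_j^m} K_n^m \subset B\bigl(N(f, b_j^{m,3}), w_m\bigr).
\]
Finally, Proposition~\ref{prop:A,S_basic}(1) gives $A_j^m \subset [n_m]$, so Proposition~\ref{prop:K_n^m_convergence}(2) gives $d(K_n, K_n^m) \le 2w_m$ for every $n \in A_j^m$. Hence $B(K_n^m, w_m) \subset B(K_n, 3w_m)$ and $K_n \subset B(K_n^m, 2w_m)$ for those $n$, and chaining the resulting inclusions produces both halves of the statement.

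There is no real obstacle here: all the serious work was done in constructing $(\bigstar_m)$ during the $m$th round of the strategy and in establishing Proposition~\ref{prop:K_n^m_convergence}, and the present proposition simply collects the bookkeeping. The only step that requires a moment's thought is the elementary monotonicity of $N(f, \cdot)$ in its second argument, needed to bridge the gap between $a_j^{m,4}$ and $a_j$ and between $b_j^{m,3}$ and $b_j$.
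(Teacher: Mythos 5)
Your proposal is correct and follows essentially the same route as the paper's proof: invoke $(\bigstar_m)$ (through Claim~\ref{claim:beta_m}), take the hat/check union, apply the monotonicity of $N(f,\cdot)$ in its second argument, and chain with the estimate $d(K_n,K_n^m)\le 2w_m$ from Proposition~\ref{prop:K_n^m_convergence}(2). The only minor slip is that $d(K_n,K_n^m)\le 2w_m$ gives $K_n\subset\bar{B}(K_n^m,2w_m)$ rather than the open ball, but this does not affect the final chaining to $3w_m$.
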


\begin{proof}
 Suppose that $j\le m$. Then by the choice of $\beta_m$ (Claim~\ref{claim:beta_m}),
 we have
 \begin{itemize}
  \item $\tilde{N}(f,a_j^{m,4})\subset\bigcup_{n\in A_j^m}B(\tilde{K}_n^m,w_m)$;
  \item $\bigcup_{n\in A_j^m}\tilde{K}_n^m
         \subset B\bigl(\tilde{N}(f,b_j^{m,3}),w_m\bigr)$.
 \end{itemize}
 Taking the union for $\hat{\ }$ and $\check{\ }$ gives
 \[
  N(f,a_j^{m,4})\subset\bigcup_{n\in A_j^m}B(K_n^m,w_m)\quad\text{and}\quad
  \bigcup_{n\in A_j^m}K_n^m\subset B\bigl(N(f,b_j^{m,3}),w_m\bigr).
 \]
 Therefore Proposition~\ref{prop:K_n^m_convergence} (2) shows that
 \begin{gather*}
  N(f,a_j)\subset N(f,a_j^{m,4})
          \subset\bigcup_{n\in A_j^m}B(K_n^m,w_m)
          \subset\bigcup_{n\in A_j^m}B(K_n,3w_m),\\
  \begin{aligned}
   \bigcup_{n\in A_j^m}K_n
   &\subset\bigcup_{n\in A_j^m}\bar{B}(K_n^m,2w_m)
    \subset B\bigl(N(f,b_j^{m,3}),3w_m\bigr)\\
   &\subset B\bigl(N(f,b_j),3w_m\bigr).
  \end{aligned}\qedhere
 \end{gather*}
\end{proof}

\begin{proposition}\label{prop:(K,f)inX}
 If $f\in\bigcap_{m=1}^{\infty}B(g_m,\beta_m)$, then $(\ve{K},f)\in\X$.
\end{proposition}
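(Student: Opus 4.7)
The plan is to exhibit witnesses making $(\ve{K},f,\ve{n},\ve{\delta},\ve{a},\ve{b})\in\Y_0$, so that $(\ve{K},f)\in\X$ follows from the definition. Take $\ve{n}=(n_m)$ from the strategy (which lies in $Z$ because the strategy enforces $n_m\ge n_{m-1}+m-1$), $\ve{a}=(j)_j$, and $\ve{b}=(b_m)$ (both in $X$ by the choice of $b_m>\max\{m+2,b_{m-1}\}$). It remains to pick $\ve{\delta}\in Y$ and to verify the three defining conditions of $\Y_0$.

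For the two inclusions
\[
 N(f,a_j)\subset\bigcup_{n\in A_j^m(\ve{n})}B(K_n,\delta_m),\qquad
 \bigcup_{n\in A_j^m(\ve{n})}K_n\subset B\bigl(N(f,b_j),\delta_m\bigr)
\]
for $j\le m$, Proposition~\ref{prop:N(f)_approximation} supplies exactly what is wanted with radius $3w_m$, so any $\delta_m\ge 3w_m$ suffices. For the remaining condition $\ve{K}\in\Sr_0(\ve{n},\ve{\delta})$, namely
\[
 \bigcup_{n\in A_j^m(\ve{n})\setminus A_j^{m-1}(\ve{n})}K_n
 \subset\bigcup_{n\in A_{j-1}^{m-1}(\ve{n})}B(K_n,\delta_m)
\]
for $2\le j\le m-1$, I would start from property~(5) of the construction of $\ve{K}^m$, which after taking the union over the $\hat{\ }$ and $\check{\ }$ versions gives the analogous inclusion with $K_n^m$ in place of $K_n$ and slack $2w_{m-1}$. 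Proposition~\ref{prop:K_n^m_convergence}(2) yields $d(K_n,K_n^m)\le 2w_m$ on the relevant index range $[n_m]\supset A_j^m(\ve{n})$, and a triangle inequality then promotes the inclusion to the limit with slack $2w_{m-1}+4w_m$.

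To close the argument it suffices to choose a strictly decreasing $\ve{\delta}\in(0,1)^{\N}$ that dominates both $3w_m$ and $2w_{m-1}+4w_m$; since $w_m<w_{m-1}/2$, setting $\delta_m=5w_{m-1}$ for $m\ge 2$ (and $\delta_1$ larger to maintain monotonicity) works, and we may assume $w_1$ was chosen small enough at the outset so that $\delta_m<1$ for every $m$. Modulo this bookkeeping, the only step requiring genuine care is the triangle-inequality passage from $K_n^m$ to the limit $K_n$; once it is in place, all three defining conditions of $\Y_0$ are verified simultaneously, giving $(\ve{K},f,\ve{n},\ve{\delta},\ve{a},\ve{b})\in\Y_0$ and hence $(\ve{K},f)\in\X$.
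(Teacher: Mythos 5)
Your proposal is correct and follows essentially the same route as the paper: extract the $\Sr_0$ condition from property~(5) of the construction of $\ve{K}^m$ together with Proposition~\ref{prop:K_n^m_convergence}(2), obtain the other two conditions from Proposition~\ref{prop:N(f)_approximation}, and package a dominating $\ve{\delta}\in Y$. The paper takes $\delta_m=4w_m+2w_{m-1}$ rather than $5w_{m-1}$, but this is only a cosmetic difference in bookkeeping.
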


\begin{proof}
 Remember that if $2\le j\le m-1$, then
 \[
  \bigcup_{n\in A_j^m\setminus A_j^{m-1}}\tilde{K}_n^m
  \subset\bigcup_{n\in A_{j-1}^{m-1}}B(\tilde{K}_n^m,2w_{m-1}),
 \]
 and so the same inclusion holds when $\tilde{K}_n^m$ is replaced by $K_n^m$:
 \[
  \bigcup_{n\in A_j^m\setminus A_j^{m-1}}K_n^m
  \subset\bigcup_{n\in A_{j-1}^{m-1}}B(K_n^m,2w_{m-1}).
 \]
 Therefore Proposition~\ref{prop:K_n^m_convergence} (2) shows that
 \begin{align*}
  \bigcup_{n\in A_j^m\setminus A_j^{m-1}}K_n
  &\subset\bigcup_{n\in A_j^m\setminus A_j^{m-1}}\bar{B}(K_n^m,2w_m)
   \subset\bigcup_{n\in A_{j-1}^{m-1}}B(K_n^m,2w_m+2w_{m-1})\\
  &\subset\bigcup_{n\in A_{j-1}^{m-1}}B(K_n,4w_m+2w_{m-1})
 \end{align*}
 whenever $2\le j\le m-1$.
 Hence if we define $\ve{\delta}\in Y$ by $\delta_m=4w_m+2w_{m-1}$ for $m\in\N$,
 then, using Proposition~\ref{prop:N(f)_approximation}, we may conclude that
 \begin{itemize}
  \item $\bigcup_{n\in A_j^m\setminus A_j^{m-1}}K_n
         \subset\bigcup_{n\in A_{j-1}^{m-1}}B(K_n,\delta_m)$
        whenever $2\le j\le m-1$, i.e.~%
        $\ve{K}\in\Sr_0(\ve{n},\ve{\delta})$;
  \item $N(f,a_j)\subset\bigcup_{n\in A_j^m}B(K_n,\delta_m)$ whenever $j\le m$;
  \item $\bigcup_{n\in A_j^m}K_n\subset B\bigl(N(f,b_j),\delta_m\bigr)$
        whenever $j\le m$.
 \end{itemize}
 It follows that
 $(\ve{K},f,\ve{n},\ve{\delta},\ve{a},\ve{b})\in\Y_0$, implying that $(\ve{K},f)\in\X$.
\end{proof}

\begin{proposition}
 We have $\bigcap_{m=1}^{\infty}B(g_m,\beta_m)\subset S$.
 Hence the strategy makes Player~II win.
\end{proposition}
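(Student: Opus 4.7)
The statement is essentially the culmination of the preceding two propositions, so my plan is to assemble them rather than introduce new machinery. Given $f\in\bigcap_{m=1}^\infty B(g_m,\beta_m)$, I want to produce an $\ve{K}\in\Ar$ with $(\ve{K},f)\in\X$, which by definition of $S$ will show $f\in S$.

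First, I would take $\ve{K}=(K_n)_{n\in\N}$ to be the sequence produced by Proposition~\ref{prop:K_n^m_convergence}, so $K_n=\lim_{m\to\infty}K_n^m$ in $\K$. Part~(3) of that proposition immediately gives $\ve{K}\in\Ar$, independently of $f$; the dependence on $f$ enters only via Player~II's construction of the $g_m$, $\beta_m$, and associated data, and $\ve{K}$ is determined by that construction alone. Then Proposition~\ref{prop:(K,f)inX} applies directly to our $f$ and yields $(\ve{K},f)\in\X$. Combining these, $f\in S$, so $\bigcap_{m=1}^\infty B(g_m,\beta_m)\subset S$.

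For the final sentence, I would note that by the rules of the Banach--Mazur game, each $B(g_m,\beta_m)$ chosen by Player~II is contained in the preceding ball $B(f_m,\alpha_m)$ chosen by Player~I, and conversely $B(f_{m+1},\alpha_{m+1})\subset B(g_m,\beta_m)$, so
\[
 \bigcap_{m=1}^{\infty}B(f_m,\alpha_m)\cap\bigcap_{m=1}^{\infty}B(g_m,\beta_m)
 =\bigcap_{m=1}^{\infty}B(g_m,\beta_m)\subset S.
\]
Thus whenever both players follow the game and Player~II plays according to the strategy above, the intersection of all balls is contained in $S$, which by Definition~\ref{defn:BMgame} means Player~II wins. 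There is no substantive obstacle here: the real work was done in verifying that the inductive construction does yield the convergence of Proposition~\ref{prop:K_n^m_convergence} and the membership of Proposition~\ref{prop:(K,f)inX}; once those are in hand, this last proposition is a one-line deduction, after which Theorem~\ref{thm:BMgame} completes the proof of the Key Proposition.
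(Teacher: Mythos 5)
Your proposal is correct and follows the paper's argument exactly: combine Proposition~\ref{prop:K_n^m_convergence}~(3) with Proposition~\ref{prop:(K,f)inX} to get $f\in S$, then invoke the rules of the Banach--Mazur game and Theorem~\ref{thm:BMgame}. The extra remarks about the nested balls and the role of Definition~\ref{defn:BMgame} are just a fuller spelling-out of the paper's one-line deduction.
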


\begin{proof}
 Immediate from Proposition~\ref{prop:K_n^m_convergence} (3)
 and Proposition~\ref{prop:(K,f)inX}.
\end{proof}

This completes the proof of the key proposition (Proposition~\ref{prop:key_prop})
and hence the main theorem has been proved.

\bibliographystyle{amsplain}

\end{document}